\newtheorem{thm}[equation]{Theorem}
\newtheorem*{thm*}{Theorem}
\newtheorem{thmA}{Theorem}
\newtheorem{lem}[equation]{Lemma}
\newtheorem{prop}[equation]{Proposition}
\newtheorem{cor}[equation]{Corollary}
\theoremstyle{definition}
\newtheorem{rmk}[equation]{Remark}
\numberwithin{equation}{section}
\DeclareMathOperator{\Gr}{Gr}
\DeclareMathOperator{\Lip}{Lip}
\DeclareMathOperator{\Var}{Var}
\DeclareMathOperator{\AKT}{AKT}
\DeclareMathOperator{\mass}{mass}
\DeclareMathOperator{\vol}{vol}
\DeclareMathOperator{\sgn}{sign}
\DeclareMathOperator{\conc}{conc}
\newcommand{\ph}{\varphi}
\newcommand{\epsi}{\varepsilon}
\newcommand{\elbow}{\mathbin{\llcorner}}
\begin{document}
\title{Filling random cycles}
\author{Fedor Manin}
\address{Department of Mathematics, UCSB, Santa Barbara, California, USA}
\email{manin@math.ucsb.edu}
\begin{abstract}
  We compute the asymptotic behavior of the average-case filling volume for
  certain models of random Lipschitz cycles in the unit cube and sphere.  For
  example, we estimate the minimal area of a Seifert surface for a model of
  random knots first studied by Millett.  This is a generalization of the
  classical Ajtai--Koml\'os--Tusn\'ady optimal matching theorem from
  combinatorial probability.  The author hopes for applications to the topology
  of random links, random maps between spheres, and other models of random
  geometric objects.
\end{abstract}
\maketitle

\section{Introduction}

\subsection{Main results}
This paper introduces a new kind of average-case isoperimetric inequality.  Given
a $k$-cycle $Z$ on $([0,1]^n,\partial[0,1]^n)$, in any of a number of geometric
measure theory senses, its \emph{filling volume} $FV(Z)$ is the minimal mass of a
chain whose boundary is $Z$.  The well-known Federer--Fleming isoperimetric
inequality \cite{FF} states that for all $k$-cycles $Z$,
\[FV(Z) \leq C_{n,k}\mass(Z)^{\frac{k+1}{k}}\quad\text{and}\quad
FV(Z) \leq C_{n,k}\mass(Z).\footnote{The first inequality dominates when
  $\mass(Z)<<1$, the second when $\mass(Z)>>1$.}\]
However, one might expect that \emph{most} cycles of given mass are much easier
to fill.

Unfortunately, as explained to the author by Robert Young, a geometrically
meaningful probability measure on the space of all cycles of mass $\leq N$ may be
too much to ask for.  The issue is one of picking a scale: say we are trying to
build a random 1-Lipschitz curve in a finite-dimensional space.  If the curve is
to fluctuate randomly at scale $\epsi$, then over time $1$ it will only travel a
distance on the order of $\sqrt{\epsi}$.  Thus there is no way of ensuring
random behavior in a scale-free way.  This idea of decomposing a finite-mass
cycle into pieces at different scales can be made precise using the notion of a
\emph{corona decomposition}, as in \cite{Jones} (in dimension 1) and \cite{Young}
(in higher dimensions).

On the other hand, there are a number of ways of, and a number of motivations
for, building random ``space-filling'' cycles of mass $O(N)$ which look
essentially trivial on balls of radius $N^{-1/n}$.  Our main theorems characterize
three models of this form which exhibit similar isoperimetric behavior, and we
hypothesize that this behavior should be generic for models which are random at
many scales, including the largest---an idea which may have a precise
Fourier-analytic formulation.

This isoperimetric behavior is described in codimension $d$ by the function
\[\AKT_d(N)=\begin{cases}
  \sqrt{N} & \text{if }d=1 \\
  \sqrt{N\log N} & \text{if }d=2 \\
  N^{(d-1)/d} & \text{if }d \geq 3. \\
\end{cases}\]
\begin{thmA} \label{sphere}
  Let $Z$ be a $k$-cycle on $S^n$ obtained by sampling $N$ oriented great
  $k$-spheres independently from the uniform distribution on the oriented
  Grassmannian $\widetilde\Gr_{k+1}(\mathbb{R}^{n+1})$.  Then there are constants
  $C>c>0$ depending on $n$ and $k$ such that
  \begin{equation} \label{AKTstats:E}
    c\AKT_{n-k}(N)<\mathbb{E}(FV(Z))<C\AKT_{n-k}(N).
  \end{equation}
  Moreover, $FV(Z)$ is concentrated around its mean: there are constants
  $C_1,C_2>0$ depending on $n$ and $k$ such that
  \begin{equation} \label{AKTstats:V}
    \text{for every $r>0$, }
    \mathbb{P}[\lvert FV(Z)-\mathbb{E}(FV(Z))\rvert \geq r]
    \leq C_1\exp(-C_2\sqrt{N}r)
  \end{equation}
\end{thmA}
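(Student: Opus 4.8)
The plan is to prove \eqref{AKTstats:E} and \eqref{AKTstats:V} separately, and to treat \eqref{AKTstats:E} in the three regimes $d=1$, $d=2$, $d\geq 3$, where $d:=n-k$. The only structural input I need is the $O(n+1)$-invariance of the model, which gives the density estimate that a uniformly random great $k$-sphere passes within distance $\rho$ of a fixed point with probability $\asymp \rho^{d}$; hence at scale $\rho$ a grid cell of $S^n$ meets $\asymp N\rho^{d}$ of the sampled spheres, and everything is organized around the \emph{critical scale} $\rho_\ast := N^{-1/d}$, below which $Z$ is locally essentially trivial. I also use the trivial bound $\mass(Z)\le CN$ and that one great $k$-sphere bounds a great $(k{+}1)$-hemisphere, so $FV$ of a single summand is $O(1)$.

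\textbf{Upper bound for \eqref{AKTstats:E}.} Apply the Federer--Fleming deformation theorem to write $Z=P+\partial S$ with $P$ supported on the $k$-skeleton of a cubulation at scale $\rho_\ast$ and $\mass(S)\le C\rho_\ast\mass(Z)\lesssim N^{(d-1)/d}$, so $FV(Z)\le FV(P)+CN^{(d-1)/d}$. To fill $P$, telescope over dyadic scales $2^{-j}$ with $2^{-J}\asymp\rho_\ast$: let $Z_j$ be $Z$ pushed to the $k$-skeleton at scale $2^{-j}$ (with $Z_J=P$), fill each $Z_{j-1}-Z_j$ by a cone chain of mass $\lesssim 2^{-j}\mass(Z_{j-1}-Z_j)$, and sum. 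The probabilistic heart is the estimate $\mathbb E\,\mass(Z_{j-1}-Z_j)\asymp \sqrt N\,2^{jd/2}$: in each scale-$2^{-j}$ cell the $\asymp N2^{-jd}$ sheets through it carry independent random orientations, so the net signed sheet count fluctuates by $\asymp\sqrt{N2^{-jd}}$, each sheet contributes $\asymp 2^{-jk}$ of mass, and there are $2^{jn}$ cells. The $j$-th term is then $\asymp\sqrt N\,2^{j(d-2)/2}$ in expectation, and the geometric sum is $\asymp\sqrt N$ for $d=1$ (dominated by $j=0$), $\asymp N^{(d-1)/d}$ for $d\geq 3$ (dominated by $j=J$), and $\asymp\sqrt N\log N$ for $d=2$. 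The first two match $\AKT_d$; for $d=1$ one may instead take the explicit filling $W_0=\sum_i\epsi_i H_i$ by signed hemispheres, whose mass is $\bigl\|\sum_i\epsi_i\mathbf 1_{H_i}\bigr\|_{L^1}\asymp\sqrt N$. For $d=2$ the crude per-scale bound overshoots by $\sqrt{\log N}$, and to reach $\sqrt{N\log N}$ one must route the scale-$j$ discrepancies of many cells together, i.e.\ run the genuine planar AKT (Leighton--Shor) matching argument at each scale; this is the one place the multiscale bookkeeping does not suffice on its own, and I expect it to be the main obstacle.

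\textbf{Lower bound for \eqref{AKTstats:E}.} For $d\geq 3$ I would argue geometrically at scale $\rho=c\rho_\ast$: a positive density of the $\asymp N^{n/d}$ cells is crossed centrally and transversally by a $k$-disk of $Z$ — a single great sphere does so with probability $\asymp\rho^{d}$, so the count of such cells is $\Theta(N^{n/d})$ with high probability. For any filling $W$ of $Z$ and any such cell $Q$, the slice $W\cap\partial Q$ is a $k$-chain on $\partial Q\cong S^{n-1}(\rho)$ bounding the fat $(k{-}1)$-sphere $\partial(Z\cap Q)$; projecting $\partial Q$ onto the spanning $k$-plane and applying the constancy theorem gives $\mass(W\cap\partial Q)\gtrsim\rho^{k}$. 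Summing over the good cells and using the coarea inequality $\sum_Q\mass(W\cap\partial Q)\lesssim\rho^{-1}\mass(W)$ (for a generic grid translate) yields $\mass(W)\gtrsim N^{n/d}\rho^{k+1}=N^{(d-1)/d}$. For $d\le 2$ this only produces $\asymp\sqrt N$, so instead use $FV(Z)\ge\sup_\beta\int_Z\beta$ over $k$-forms with comass $\|d\beta\|_\infty\le 1$: for $d=1$, taking $d\beta=\sgn(f)\,\vol$ with $f=\sum_i\epsi_i\mathbf 1_{H_i}$ (corrected by a bounded amount so that $d\beta$ is exact) gives $FV(Z)\gtrsim\|f\|_{L^1}\asymp\sqrt N$, matching the upper bound since $\Var f(x)\equiv N/2$ pointwise; for $d=2$ one again needs the matching AKT lower bound, an adaptively chosen test $k$-form assembled from contributions at all $\asymp\log N$ dyadic scales down to $\rho_\ast$, $\ell^2$-weighted across the nearly independent scales so as to produce the extra $\sqrt{\log N}$.

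\textbf{Concentration \eqref{AKTstats:V}.} Run the Doob martingale obtained by revealing the $N$ i.i.d.\ oriented great spheres one at a time. Resampling one sphere changes $Z$ by two summands and hence $FV(Z)$ by $O(1)$, which already gives the Gaussian bound $\mathbb P[|FV-\mathbb E FV|\ge r]\le C_1\exp(-cr^2/N)$; to upgrade this to the stated $\exp(-C_2\sqrt N\,r)$ one needs a Bernstein/Freedman-type refinement controlling the \emph{sum of conditional variances}, the point being that, because $FV$ is an infimum over fillings, one extra sphere typically moves $\mathbb E[FV\mid\mathcal F_{i-1}]$ by far less than its worst-case $O(1)$ — a self-bounding-type property of $FV$. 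Establishing that variance bound is the delicate step in this half. In summary, the routine ingredients are the deformation theorem, coarea slicing, the constancy theorem, and central-limit/second-moment estimates; the two genuine difficulties are the $d=2$ case of \eqref{AKTstats:E}, where closing the logarithmic gap forces in the hard part of Ajtai--Koml\'os--Tusn\'ady, and the sharpening of the tail rate in \eqref{AKTstats:V}.
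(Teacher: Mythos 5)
Your plan is genuinely different from the paper's (direct multiscale filling plus a direct cell-counting lower bound, rather than a reduction to the zero-dimensional theorem), but as written it has a real gap exactly where the theorem is hardest: codimension $d=n-k=2$. For the upper bound you concede that the dyadic telescoping overshoots by $\sqrt{\log N}$ and that one must ``run the genuine planar AKT matching argument at each scale''; for the lower bound you likewise defer to ``an adaptively chosen test $k$-form assembled from all scales.'' Neither step is carried out, and carrying them out would amount to reproving the Ajtai--Koml\'os--Tusn\'ady theorem directly for $k$-cycles, which is precisely what the paper avoids. The paper's key idea is that no higher-dimensional AKT argument is needed: with probability 1 the slice of $Z$ along any great $(n-k)$-sphere is $N$ i.i.d.\ antipodal point pairs, so the lower bound follows by slicing an almost-optimal normal-current filling along a foliated family of such spheres (Prop.~\ref{slice-coarea}, Thm.~\ref{lowerE}, Remark \ref{AKT:diffeo}) and integrating the known $0$-dimensional lower bound of Theorem \ref{AKT:boundary}, while the upper bound is dual: one decomposes an exact $(k+1)$-form into basic components with a primitive whose coefficients are Lipschitz (Lemma \ref{lem:coIP}), integrates slice-by-slice (Thm.~\ref{upperE}), and handles the sphere via the central-projection patches $K_i^\pm$ and an antipodally symmetric partition of unity. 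Your $d\geq3$ and $d=1$ sketches are plausible (and your constancy-theorem cell argument is a reasonable alternative to slicing there), but without a worked-out $d=2$ argument the proof of \eqref{AKTstats:E} is incomplete in its critical case.

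On concentration, your bounded-differences martingale is essentially sufficient and you should not try to ``upgrade'' it. Resampling one great sphere changes $FV(Z)$ by $O(1)$, so Azuma--McDiarmid gives a Gaussian bound $\exp(-cr^2/N)$, i.e.\ fluctuations of order $\sqrt{N}$, which is exactly the conclusion the paper draws from \eqref{AKTstats:V} and is what its own argument (the Gromov--Milman route of Theorem \ref{concentration}, pushing the spectral-gap concentration of the parameter space through the $\sqrt{N}$-Lipschitz map to cycles with the $FV$ metric) delivers, namely $\exp(-C_2 r/\sqrt{N})$. The rate $\exp(-C_2\sqrt{N}r)$ in the statement is evidently a misprint---it would force fluctuations of order $N^{-1/2}$, contradicting both the paper's proof and its explicit remark that the spread is at most order $\sqrt{N}$---so the Freedman/self-bounding refinement you flag as ``the delicate step'' is chasing an unattainable bound and is unnecessary; in the only meaningful regime $r\gtrsim\sqrt{N}$ your Gaussian bound is already at least as strong as the paper's.
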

From \eqref{AKTstats:V} we see that the spread of the distribution around the
mean is at most on the order of $\sqrt{N}$; in codimensions $d=n-k \geq 2$, this
is small compared to the mean:
\[\text{for every $\epsi>0$, }
\frac{\lvert FV(Z)-\mathbb{E}(FV(Z))\rvert}{\mathbb{E}(FV(Z))}
\leq \epsi \text{ with high probability as }N \to \infty.\]
If a model of random codimension-$d$ cycles of mass $O(N)$ satisfies
\eqref{AKTstats:E} and \eqref{AKTstats:V}, we say it
\emph{exhibits AKT statistics}, in honor of Ajtai, Koml\'os, and Tusn\'ady, who
discovered this phenomenon in the case of zero-cycles.

By rescaling the picture, we can make this result more interpretable.  Let
$R=N^{1/(n-k)}$.  Then the corresponding process in the $n$-sphere of radius $R$
generates a cycle of mass $\Theta(R^n)$ which is evenly spread throughout the
sphere, so that a 1-ball intersects one of the great $k$-spheres in $Z$ on
average.  With that rescaling, the mass of an optimal filling becomes
\[\left\{\begin{array}{l l}
\Theta(R^n\sqrt{R}) & \text{if }k=n-1 \\
\Theta(R^n\sqrt{\log R}) & \text{if }k=n-2 \\
\Theta(R^n) & \text{if }k \leq n-3. \\
\end{array}\right.\]
Informally speaking, to meet its match, the average point in $Z$ has to travel a
distance $\Theta(\sqrt{R})$ (in codimension 1), $\Theta(\sqrt{\log R})$ (in
codimension 2), or $\Theta(1)$ (otherwise) times the distance to its closest
neighbor.

We also prove similar results for the cube:
\begin{thmA} \label{cube}
  Let $Z$ be a relative $k$-cycle on $([0,1]^n,\partial[0,1]^n)$ obtained by
  sampling $N$ planes independently from the uniform distribution on the space
  $Y$ of oriented $k$-planes which intersect $[0,1]^n$ nontrivially.  Then $Z$
  exhibits AKT statistics.
\end{thmA}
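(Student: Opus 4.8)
The plan is to deduce Theorem~\ref{cube} from Theorem~\ref{sphere} by realizing the Euclidean model as a flat limit of the spherical one. Fix a round sphere $S^n_R$ of large radius $R$ and a gnomonic chart identifying a unit-size region $Q\subset S^n_R$ with $[0,1]^n$ with bi-Lipschitz constant $1+O(1/R)$, carrying great $k$-spheres of $S^n_R$ to affine $k$-planes; under this chart the uniform measure on $\widetilde\Gr_{k+1}(\mathbb R^{n+1})$, conditioned to meet $Q$ and pushed forward to affine $k$-planes, converges to the kinematic measure $\mu_Y$ of Theorem~\ref{cube} with Radon--Nikodym density $1+O(1/R)$. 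Two facts make this useful. First, $FV$ is $O(1)$-Lipschitz in the total variation of the defining plane-intensity: adding or deleting one plane changes $FV$ by the cost of filling a single plane-piece, which is $O(1)$ in the cube and $O(R^{k+1})$ in $S^n_R$ by Federer--Fleming; hence the cube model with $N$ planes and ``$N$ great $k$-spheres of $S^n_R$ conditioned to meet $Q$'' have expected filling volumes differing by $O(N/R)$, negligible once $R$ is a fixed large power of $N$. Second, the same Lipschitz property makes de-Poissonization free, so we may work with Poissonized versions throughout.

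For the upper bound, tile $S^n_R$ by $\Theta(R^n)$ disjoint unit charts $C_1,\dots,C_m$ and sample $M=\Theta(NR^{n-k})$ great $k$-spheres uniformly, so that each chart is met by $\Theta(N)$ of them, conditionally i.i.d.\ with the law above. Cutting an optimal filling $W$ of this cycle by the charts, each $W\cap C_i$ is a relative filling of $Z\cap C_i$ and $\sum_i\mass(W\cap C_i)=\mass(W)$, so $\sum_i\mathbb E\,FV_{C_i}(Z\cap C_i)\le\mathbb E\,FV_{S^n_R}(Z)$. By the (approximate) rotational symmetry of the tiling and Theorem~\ref{sphere} rescaled to radius $R$, the right side is $\asymp R^{k+1}\AKT_{n-k}(NR^{n-k})$ and the left side is $\asymp R^n$ equal terms, so the expected per-chart filling volume is $\asymp R^{k+1-n}\AKT_{n-k}(NR^{n-k})$. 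A short computation shows this equals $\Theta(\AKT_{n-k}(N))$ precisely when $n-k\ge 2$: in codimension $\ge 3$ the $R$-dependence cancels outright, and in codimension $2$ it survives only as a bounded factor since $\log(NR^{2})=O(\log N)$ for $R$ polynomial in $N$. Transferring back through the flat limit gives $\mathbb E(FV(Z))\le C\AKT_{n-k}(N)$ for $n-k\ge 2$.

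The lower bound for $n-k\ge 2$ runs the tiling argument in reverse: place independent cube cycles in the charts, take optimal relative fillings $W_i$, and observe that the $W_i$ fail to glue into a global filling only because of correcting chains $B_i=\partial W_i-(Z\cap C_i)$ supported on the tile faces; these assemble into a $k$-cycle on the $(n-1)$-skeleton of the tiling which can be filled in $S^n_R$ at cost $O(\sum_i\mass(W_i))$, \emph{provided} the $B_i$ are chosen with mass comparable to $\mass(W_i)$ --- a coarea estimate after perturbing each $W_i$ slightly off the faces. Comparing with the lower bound in Theorem~\ref{sphere} then forces $\mathbb E(FV(Z))\ge c\AKT_{n-k}(N)$. (Alternatively, and more robustly, the lower bound follows from the calibration/Fourier construction underlying Theorem~\ref{sphere} --- a $k$-form $\omega$ with $\lVert d\omega\rVert_\infty\le 1$ and $\mathbb E\lvert\int_Z\omega\rvert\gtrsim\AKT_{n-k}(N)$, built by superposing $\Theta(\log N)$ Fourier-orthogonal scales down to $N^{-1/(n-k)}$ --- which is entirely local and transfers to the cube verbatim.) The codimension-one case $k=n-1$ must be excluded from both reductions, because there the filling cost is governed by the coarsest scale rather than the finest, so inflating the sphere genuinely changes the answer and the constant-factor density mismatch is no longer negligible; I would treat it directly, noting that a relative $(n-1)$-cycle in $[0,1]^n$ is $\partial$ of an $n$-chain, i.e.\ essentially a signed function with prescribed unit jumps across $N$ random hyperplanes whose minimal $L^1$ norm is $FV(Z)$, and estimating that by the classical ($k=0$) multiscale AKT argument.

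Finally, concentration: $FV(Z)$ is a function of $N$ i.i.d.\ planes and changing one alters it by $O(1)$, so a bounded-difference (McDiarmid) inequality, or a Poisson concentration inequality on the Poissonized side, yields \eqref{AKTstats:V} --- the spread $\asymp\sqrt N$ being the scale at which the bound becomes nontrivial --- and this also validates the de-Poissonization used above. I expect the main obstacles to be: (i) the coarea control of the face-correctors $B_i$ in the lower-bound tiling argument --- or, if one proves the lower bound directly instead, the arrangement of the $\Theta(\log N)$ scales in the dual form so that they add up without losing a factor $\sqrt{\log N}$ in codimension $2$; and (ii) the self-contained codimension-one case, which, though morally the original AKT theorem, must be set up carefully as an $L^1$ minimization over signed fillings with free boundary on $\partial[0,1]^n$. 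The codimension-$2$ balance point, where every scale contributes equally, is where essentially all of the delicacy lives in both the upper and the lower bound.
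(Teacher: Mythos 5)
The lower bound is where your proposal genuinely breaks. You offer two routes, and both have gaps at the decisive point. In the tiling-in-reverse route, the cycle you assemble from ``independent cube cycles in the charts'' is not distributed like the spherical $Z$ (a single great $k$-sphere crosses many charts, so the true restrictions are strongly dependent and come from the same planes), so Theorem~\ref{sphere}'s lower bound says nothing about the glued object; and even after repairing this by restricting the actual spherical cycle to charts and using linearity of expectation, the key step --- filling the mismatch cycle $B=\sum_i B_i$ supported on the $(n-1)$-skeleton at cost $O\bigl(\sum_i\mass(W_i)\bigr)$ --- is unjustified. The coarea/perturbation trick controls $\mass(B_i)$, not $FV(B)$: the traces of the optimal relative fillings on the two sides of a shared face come from unrelated minimizers and do not cancel, so filling $B$ is itself an AKT-type matching problem on each face (the only free bound, coning over scale $R$ \`a la Federer--Fleming, loses a factor $R$), which is circular exactly where the content of the theorem lies. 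The alternative route invokes ``the calibration/Fourier construction underlying Theorem~\ref{sphere},'' but the statement of Theorem~\ref{sphere} furnishes no such witness form, and the paper does not prove it that way; asserting that a multiscale dual form with $\lVert d\omega\rVert_\infty\le 1$ and $\mathbb{E}\int_Z\omega\gtrsim\AKT_{n-k}(N)$ ``transfers verbatim'' is asserting, not proving, the codimension-$2$ lower bound. Finally, codimension $1$ is excluded from both reductions and deferred to a sketch, yet it is part of the statement.

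It is also worth noting that your reduction runs opposite to the paper's logic: there, Theorem~\ref{cube} comes first, by slicing along coordinate $(n-k)$-planes --- each slice $Z\cap P$ is a $0$-cycle of i.i.d.\ signed points, Theorem~\ref{AKT:boundary} gives matching bounds on each slice, Theorem~\ref{upperE} (via Lemma~\ref{lem:coIP}, a Poincar\'e lemma with Lipschitz control on primitives) integrates the slice upper bounds through the dual/Stokes formulation \eqref{eqn:sup}, and Theorem~\ref{lowerE} integrates the slice lower bounds using slicing of normal currents (Prop.~\ref{slice-coarea}); Theorem~\ref{sphere} is then deduced from cube-type charts, in all codimensions at once. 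Your upper-bound tiling argument could probably be made rigorous with work (the per-chart averaging, the Binomial-versus-fixed-$N$ coupling, and the $1+O(1/R)$ density comparison all need care, and your exclusion of codimension $1$ there is correctly diagnosed), and your bounded-difference/McDiarmid concentration argument is sound --- arguably simpler than the paper's Gromov--Milman route --- but without a working lower bound and a genuine treatment of codimension $1$, the proposal does not prove the theorem.
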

There may be reasonable disagreement as to which distribution is the uniform one
in this context; to prove \eqref{AKTstats:E} it suffices to require that it be
uniform on each subset of $Y$ (isometric to two copies of a polytope) consisting
of parallel planes, but to prove \eqref{AKTstats:V} we also need to assume that
it behaves reasonably with respect to the manifold structure on $Y$ (for example,
is a positive density or has finite support).

In fact, the only thing used here about $Z$ is that almost all of its ``slices''
along coordinate $k$-planes consist of $O(N)$ independent uniformly distributed
points.  This means that there are a number of other possible models that can be
fit into this framework.  However, the following requires a separate proof:
\begin{thmA} \label{knot}
  Let $\{M_N\}$ be a sequence of $k$-dimensional oriented pseudomanifolds with
  $N$ vertices and at most $L$ simplices incident to any given simplex.  Let $Z$
  be a $k$-cycle on $[0,1]^n$ obtained by sending each vertex of $M_N$ to a
  uniformly random point in $[0,1]^n$ and extending linearly.  Then $Z$ exhibits
  AKT statistics.
\end{thmA}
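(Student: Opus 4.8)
The plan is to adapt the coordinate‑slicing argument that proves Theorem~\ref{cube}. That proof uses only that the slices of $Z$ along coordinate $k$‑planes are $O(N)$ independent uniform points; in the pseudomanifold model this fails, but only in a controlled way. Write $[0,1]^n=[0,1]^k\times[0,1]^d$ with $d=n-k$, let $\pi\colon[0,1]^n\to[0,1]^k$ be the first projection, and set $F_t=\pi^{-1}(t)\cong[0,1]^d$. Since $f$ sends each $k$‑simplex of $M_N$ to the convex hull of interior points, $Z$ is an honest $k$‑cycle supported in $(0,1)^n$, and for almost every $t$ the slice $Z_t$ is an honest $0$‑cycle in $F_t$ carrying one signed point $x_\sigma(t)$ for each $k$‑simplex $\sigma$ with $t$ in the interior of $\pi(f(\sigma))$, namely the unique point of $f(\sigma)$ over $t$. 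The first step is to show that although the $x_\sigma(t)$ are neither independent nor uniform they are close enough: using only the bound $L$ on the number of simplices incident to a given simplex, I would check that (i) for $t$ in a compact subset of $(0,1)^k$ the number of $\sigma$ with $t\in\pi(f(\sigma))$ is $\Theta(N)$ in expectation and concentrates; (ii) conditioned on $t\in\pi(f(\sigma))$, the position $x_\sigma(t)\in[0,1]^d$ has a density bounded above, and bounded below away from $\partial[0,1]^d$; and (iii) $x_\sigma(t)$ and $x_{\sigma'}(t)$ are independent unless $\sigma,\sigma'$ share a vertex, so that the pairwise‑dependency graph on the $k$‑simplices has degree at most $(k+1)L$.

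The heart of the matter is then a robust form of the Ajtai--Koml\'os--Tusn\'ady theorem: a $0$‑cycle on $[0,1]^d$ made of $\Theta(N)$ points whose dependency graph has bounded degree and whose marginals have densities bounded above, and bounded below in the interior, exhibits AKT statistics, with constants depending only on $d$ and the degree bound. Its upper half is the classical hierarchical matching, which uses only the upper density bound and is unaffected by bounded‑degree dependence. Its lower half is the classical lower bound, which survives once one checks that the counts of points in dyadic subcubes still have fluctuations of order the square root of their mean, up to a factor depending on the degree bound. This is where the proof genuinely differs from that of Theorem~\ref{cube}, and I expect the lower bound in codimension $2$ --- where the logarithmic factor is an inherently multiscale phenomenon --- to be the main obstacle.

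Granting the lemma, the two halves of \eqref{AKTstats:E} follow by slicing. For the lower bound: if $\partial W=Z$ then for a.e.\ $t$ the slice $W_t$ is a $1$‑chain in $F_t$ with $\partial W_t=Z_t$ modulo a $0$‑chain on $\partial F_t$, so $\mass(W_t)\ge FV(Z_t)$, the filling volume here being taken relative to $\partial F_t$ --- which is still $\gtrsim\AKT_d(N)$ by the (relative form of the) lemma, routing mass to $\partial F_t$ being no cheaper than matching in the interior. The coarea inequality for the $1$‑Lipschitz map $\pi|_W$ gives $\mass(W)\ge\int_{[0,1]^k}\mass(W_t)\,dt$, and taking the infimum over $W$ and then expectations yields $\mathbb{E}(FV(Z))\ge\int_{[0,1]^k}\mathbb{E}(FV(Z_t))\,dt\ge c\,\AKT_d(N)$. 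For the upper bound: one assembles $W$ from a family of near‑optimal fiberwise fillings $W_t\subset\mathrm{int}\,F_t$ chosen to depend Lipschitz‑continuously on $t$, which forces $\partial W=Z$ exactly; the re‑matching cost incurred in making this choice is $o(\AKT_d(N))$ exactly as in Theorem~\ref{cube}, the only new point being the $O(N)$ walls --- the projections of the $(k-1)$‑faces of $M_N$ --- across which $Z_t$ changes combinatorial type, where the $\le L$ colliding slice points have net charge $0$ by the cycle condition and can therefore be absorbed into the matching continuously, at a total extra cost $O(NLN^{-2/d})=o(\AKT_d(N))$. Hence $FV(Z)\le\int_{[0,1]^k}FV(Z_t)\,dt+o(\AKT_d(N))\le C\,\AKT_d(N)$.

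Finally, \eqref{AKTstats:V} is the concentration argument of Theorems~\ref{sphere} and~\ref{cube}. Moving one vertex of $M_N$ by $\epsi$ displaces only the $\le(k+1)L$ incident simplices, and the difference of the old and new cycles equals the boundary of a prism chain of mass $O(L\epsi)$ plus a $k$‑cycle of mass $O(L\epsi)$; filling the latter at linear isoperimetric cost shows $FV(Z)$ changes by $O(L\epsi)$, so $FV(Z)$ is an $O(L)$‑Lipschitz function of the $N$ independent vertex positions. Since the uniform measure on $[0,1]^n$ satisfies a Poincar\'e inequality, so does its $N$‑fold product, and the resulting exponential concentration of measure gives \eqref{AKTstats:V}.
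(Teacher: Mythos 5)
Your overall framework matches the paper's: slice along coordinate $k$-planes, prove a dependent version of the AKT theorem for the slices, recover the lower bound by the coarea inequality for slices of a near-optimal filling, and get \eqref{AKTstats:V} from Lipschitz dependence of $FV(Z)$ on the vertex positions plus concentration for product measures (that last part is essentially identical to the paper's Section \ref{S:concentration} and is fine). The genuine gap is exactly at what you yourself call the heart of the matter: the lower bound for slices with bounded-degree dependence, above all in codimension $2$. You assert that the classical lower bound ``survives once one checks that the counts of points in dyadic subcubes still have fluctuations of order the square root of their mean,'' but such a variance lower bound is not by itself enough: the $d=2$ argument of \cite{AKT} builds a multiscale dual function whose expected pairing with the cycle is computed using independence, and with dependent points one must exclude the scenario in which the correlated slice points of adjacent simplices systematically cancel each other's discrepancies at small scales. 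The paper's proof of Theorem \ref{knot:lower} supplies precisely the missing ingredients: an $(L+1)$-coloring of the dependency graph so that the dual test function $f$ is built from the largest independent class $Z_1$ (where the classical constructions for $d=1$, $d=2$, $d\geq3$ apply verbatim), together with a quantitative decorrelation estimate (Lemma \ref{correlation}: conditioned on one slice point lying in a cube of side $\ell$, a correlated slice point lies in the same cube with probability $O(\sqrt{\ell})$), which in turn rests on geometric analysis of the slicing map for random simplices (Lemmas \ref{Jacobian} and \ref{not-too-close}). None of this, nor any substitute for it, appears in your proposal; flagging $d=2$ as ``the main obstacle'' concedes the step rather than proving it.

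A secondary gap is your upper-bound assembly. Gluing near-optimal fiberwise fillings ``depending Lipschitz-continuously on $t$'' is not what the proof of Theorem \ref{cube} does, and it is itself delicate: optimal matchings do not vary continuously in $t$, and controlling the vertical boundary created by varying $t$ is exactly the difficulty the paper sidesteps by never constructing fillings at all, using instead the dual formulation of $FV$ (Proposition \ref{duality}), an antidifferential with coordinatewise Lipschitz control (Lemma \ref{lem:coIP}), and the resulting inequality $\mathbb{E}(FV(Z))\leq\binom{n}{k}C_{n,k}\sup_P\mathbb{E}(FV(Z\cap P))$ of Theorem \ref{upperE}. Your slice-level claim that the hierarchical matching upper bound tolerates bounded-degree dependence given an upper density bound is plausible (the paper handles the slices differently, by coloring, adding paired balancing points to make each color class a cycle, and invoking the density version of the AKT upper bound as in Lemma \ref{knot:upper}), but the passage from fiberwise fillings to a global filling, and the claimed $O(NLN^{-2/d})$ cost at the walls, are unjustified as written; if you instead plug your slice bound into the duality argument, this part becomes sound.
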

In the context of this theorem, the constants in \eqref{AKTstats:E} depend on
$n$, $k$, and $L$, but not on the shapes of the pseudomanifolds (which can
therefore also be randomized).  The case $k=1$, $n=3$ describes the ``random
jump'' model of random knots and links introduced by Millett \cite{Millett}.
Moreover, by a theorem of Hardt and Simon \cite{HS}, the optimal filling of such
a knot or link (after a slight rounding of corners) is a $C^1$ embedded surface.
In particular:
\begin{cor}
  For some $C>c>0$, the minimal Seifert surface of a knot produced using $N$
  random jumps has area between $c\sqrt{N\log N}$ and $C\sqrt{N\log N}$ with
  high probability.
\end{cor}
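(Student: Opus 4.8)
The plan is to read the corollary off Theorem~\ref{knot} and the regularity theorem of Hardt and Simon. A knot built from $N$ random jumps in $[0,1]^3$ is precisely the cycle $Z$ of Theorem~\ref{knot} in the case $n=3$, $k=1$, with $M_N$ the oriented $N$-gon (a $1$-dimensional pseudomanifold in which each simplex is incident to a bounded number $L$ of others); note that almost surely the resulting polygon is embedded. Since $n-k=2$ and $\AKT_2(N)=\sqrt{N\log N}$, the theorem gives $\mathbb{E}(FV(Z))\in[c_0\sqrt{N\log N},\,C_0\sqrt{N\log N}]$ by \eqref{AKTstats:E}; feeding $r=\tfrac{c_0}{2}\sqrt{N\log N}$ into \eqref{AKTstats:V}, I would observe that the probability that $FV(Z)$ falls outside $[\tfrac{c_0}{2}\sqrt{N\log N},\,(C_0+\tfrac{c_0}{2})\sqrt{N\log N}]$ is at most $C_1\exp(-\tfrac{C_2c_0}{2}N\sqrt{\log N})$, which tends to $0$. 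Thus with high probability $FV(Z)$, interpreted as the minimal mass of an integral $2$-current bounding $Z$, is $\Theta(\sqrt{N\log N})$; it then remains to identify this quantity with the area of a minimal Seifert surface.

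For that identification I would first note that a mass-minimizing integral current is supported in the convex hull of its boundary, and almost surely this hull is a compact subset of the open cube; hence the minimizer avoids $\partial[0,1]^3$, the value of $FV(Z)$ is unchanged whether computed in $[0,1]^3$ or in $\mathbb{R}^3$, and no regularity issue arises at $\partial[0,1]^3$. Next I would round the corners of $Z$ at a sufficiently small scale to obtain a smooth embedded knot $\widetilde Z$ of the same knot type with $\lvert FV(\widetilde Z)-FV(Z)\rvert\le FV(\widetilde Z-Z)\to0$ (the $1$-cycle $\widetilde Z-Z$ is supported near the corners and has arbitrarily small mass, so this follows from the Federer--Fleming inequality~\cite{FF}). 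By interior regularity for codimension-one area minimizers together with the Hardt--Simon boundary regularity theorem~\cite{HS}, a mass-minimizer $T$ with $\partial T=\widetilde Z$ has support a $C^1$ embedded oriented surface-with-boundary whose boundary is all of $\widetilde Z$; there are no closed components (the cube carries no closed minimal surfaces, by the maximum principle) and $T$ has multiplicity one, so $\mathrm{spt}\,T$ is a (necessarily connected) Seifert surface $S$ for $\widetilde Z$ with $\mathrm{area}(S)=\mass(T)=FV(\widetilde Z)$. Conversely every Seifert surface for $\widetilde Z$ is an integral $2$-chain with boundary $\widetilde Z$ and hence has area at least $FV(\widetilde Z)$, so the minimal Seifert surface area equals $FV(\widetilde Z)$, which by the first paragraph lies between $c\sqrt{N\log N}$ and $C\sqrt{N\log N}$ with high probability.

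I expect the only delicate point to be the regularity bookkeeping in the middle step: one must check that the corner-rounding can be carried out so as to keep the curve embedded and $C^{1,\alpha}$ (so that \cite{HS} genuinely applies) without altering the knot type or appreciably changing $FV$, and one must confirm that the current-theoretic minimizer is an honest two-sided, coherently oriented embedded surface whose boundary is exactly the knot---orientability and multiplicity one coming from working over $\mathbb{Z}$ and from the emptiness of the interior singular set for $2$-dimensional minimizers in $\mathbb{R}^3$, together with the fact that no component can be closed or carry a proper sublink as boundary. Granting these standard facts, the corollary is an immediate combination of Theorem~\ref{knot} with \cite{HS}.
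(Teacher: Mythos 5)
Your argument is correct and is essentially the paper's own: specialize Theorem \ref{knot} to $n=3$, $k=1$ (so $\AKT_{2}(N)=\sqrt{N\log N}$), combine the expectation bounds with the concentration estimate of Section \ref{S:concentration}, and identify $FV$ with the least Seifert-surface area via Hardt--Simon boundary regularity after a slight rounding of corners---which is exactly the one-sentence justification the paper gives, with the details filled in. Two minor caveats, neither fatal: the exponent actually proved in Theorem \ref{concentration} is $C_2 r/\sqrt{N}$ rather than the $C_2\sqrt{N}r$ appearing in \eqref{AKTstats:V}, so your deviation probability should read $C_1\exp\bigl(-c\sqrt{\log N}\bigr)$ (still $o(1)$), and your reinterpretation of $FV(Z)$ as the minimal mass of an \emph{integral} filling needs a word of justification (the deformation theorem, or Federer's equality of real and integral minimal masses in codimension one), a point the paper likewise leaves implicit.
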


\subsection{Motivation}
The methods we use to prove Theorems \ref{sphere}, \ref{cube}, and \ref{knot} can
be easily extended to other i.i.d.\ samples of simple shapes on various spaces.
However, the investigation is mainly motivated by the desire to analyze
topological invariants of random geometric objects such as links and maps.
Models of such objects tend to produce random cycles which are similarly trivial
at small scales, but are more difficult to sample because they cannot be easily
written in terms of i.i.d.\ parameters.

\subsubsection*{Random knots and links}
There have been a number of proposed models of random knots and links; see
\cite{EZ} for a detailed survey.  Several of these models are ``spatial'' in the
sense that they produce random knotted curves in space, and one supposes that
these may exhibit AKT statistics for filling area.  As mentioned above, we show
this for Millett's random jump model, but it may also be true for random
polygonal walks with shorter segments as well as random grid walks, perhaps with
some restrictions on segment length.

Given two random curves in a certain model, one may want to understand the
distribution of their linking number.  Since this will usually be zero on
average, the first interesting question is about the second moment.  Linking
number can be computed as the intersection number of one curve with a filling of
the other, thus one may expect that two random curves of length $N$ which exhibit
AKT statistics have expected squared linking number $\sim N\log N$ or
$\sim N\sqrt{\log N}$.

However, this is not the case for the Millett model: the second moment of the
linking number between two random jump curves of length $N$ is $\sim N$
\cite{ABDKS,FlKo}.  Similarly, one may take the setup of Theorem \ref{sphere} for
$k=1$ and $n=3$ as a model of a random link and try to understand the
\emph{total linking number}, that is, the sum of the signed linking numbers of
all pairs of circles.  This is then the intersection number of the chain with its
own filling.  Here it is easy to see (as pointed out by Matthew Kahle) that the
second moment of the distribution is once again $\sim N$.

In both cases, this seeming incongruity perhaps boils down once again to the
issue of multiple scales: random jump curves and great circles only ``see'' the
largest scales, but the lower bound on filling volume in codimension 2 comes from
looking on many different scales at once.  One may perhaps get a different answer
most easily by analyzing the linking number of an asymmetric model: a random jump
curve and a random walk of total length $N$ made of smaller segments.

In \cite{Tanaka,Marko}, the second moment is computed for the linking number of
two random walks; normalizing so that these walks have length $N$ and expected
diameter $1$, this second moment again becomes $\sim N$.  In this model, however,
randomness happens at scale $\sim 1/N$, so it is not expected to exhibit AKT
statistics.

\subsubsection*{Random maps}
Another way of producing a random (framed) link is as the preimage of a generic
point under a random map $f:S^3 \to S^2$.  In fact, the self-linking number of
this link is the Hopf invariant of the map, which is itself a natural subject for
investigation since it is a complete topological invariant of such maps.

One natural model of $L$-Lipschitz random maps is a uniformly random
\emph{simplicial} map from a triangulation of $S^3$ at scale $\sim L$ to a
tetrahedron.  The maximal self-linking number of such a map is $\Theta(L^4)$,
cf.\ \cite{GroHED}; on the other hand, the heuristics above would suggest that
the second moment of the linking number of the random model is between $L^3$ and
$L^3\log L$.

These ideas may have applications in topological and geometric data analysis, see
\cite{FKW}.

\subsection{Methods}
The $k=0$ cases of Theorems \ref{sphere} and \ref{cube} are, up to minor
adjustments, a classical theorem in combinatorial probability:
\begin{thm}[Ajtai, Koml\'os, and Tusn\'ady \cite{AKT}] \label{thm:AKT}
  Let $\{X_1,\ldots,X_N\}$ and $\{Y_1,\ldots,Y_N\}$ be two sets of independent,
  uniformly distributed random points in $[0,1]^d$, and let $L$ be the
  \emph{transportation cost} between $\{X_i\}$ and $\{Y_i\}$, that is, the total
  length of an optimal matching.  Then there are constants $0<c_d<C_d$
  such that with high probability,
  \[c_d\AKT_d(N)<L<C_d\AKT_d(N).\]
\end{thm}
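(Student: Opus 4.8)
The plan is to realize $L$ as an optimal transport cost, estimate it above and below via multiscale discrepancies, and reduce the high-probability claim to an estimate on $\mathbb E(L)$ by concentration. Write $\mu_X=\frac1N\sum_i\delta_{X_i}$, $\mu_Y=\frac1N\sum_i\delta_{Y_i}$. Since the extreme points of the transportation polytope between two uniform measures on $N$ points are permutations (Birkhoff--von Neumann), $L=N\,W_1(\mu_X,\mu_Y)$ for the Kantorovich--Wasserstein distance on $[0,1]^d$. Relocating one of the $2N$ sample points changes $L$ by at most $\sqrt d$ (keep the old matching), so bounded differences gives $\mathbb P(|L-\mathbb E(L)|>t)\lesssim\exp(-ct^2/dN)$; since $\AKT_d(N)^2/N\to\infty$ for $d\ge2$, this reduces the theorem in those dimensions to $c_d\AKT_d(N)<\mathbb E(L)<C_d\AKT_d(N)$. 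The case $d=1$ I would handle directly through the identity $L=\int_0^1|B(t)|\,dt$ with $B(t)=\#\{i:X_i\le t\}-\#\{i:Y_i\le t\}$ a centered walk of variance $\Theta(Nt(1-t))$, for which $\mathbb E(L)\asymp\sqrt N$ and tightness of $L/\sqrt N$ are routine.

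For $d\ge2$ I would use the dyadic cubes $\mathcal Q_j$ of side $2^{-j}$, $0\le j\le J$ with $2^J\asymp N^{1/d}$, and the discrepancies $D_Q=\#\{i:X_i\in Q\}-\#\{i:Y_i\in Q\}$, centered with variance $\Theta(\min\{N2^{-jd},N\})$ and roughly independent across disjoint cubes of a common scale. For the \textbf{upper bound}, the standard comparison $W_1(\mu,\nu)\lesssim\sum_j2^{-j}\sum_{Q\in\mathcal Q_j}|\mu(Q)-\nu(Q)|$ (correct the counts scale by scale) yields $\mathbb E(L)\lesssim\sum_{j=0}^J2^{-j}\sum_{Q\in\mathcal Q_j}\mathbb E|D_Q|\lesssim\sqrt N\sum_{j=0}^J2^{j(d-2)/2}$, which is $\Theta(N^{(d-1)/d})$ for $d\ge3$ (dominated by $j=J$) but only $\Theta(\sqrt N\log N)$ for $d=2$.

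For the \textbf{lower bound} I would use Kantorovich duality $L=N\sup\{\langle f,\mu_X-\mu_Y\rangle:\Lip(f)\le1\}$. For $d\ge3$ (and $d=1$) the $1$-Lipschitz function $f(x)=\dist(x,\{Y_j\}_j)$ gives $L\ge\sum_i\dist(X_i,\{Y_j\}_j)$, and since the nearest of $N$ uniform points to a typical point is at distance $\asymp N^{-1/d}$, taking expectations gives $\mathbb E(L)\gtrsim N^{(d-1)/d}$, sharp for $d\ge3$. With the upper bound and the concentration step this disposes of every case except $d=2$, where both elementary arguments miss the truth $\sqrt{N\log N}$ by a factor $\sqrt{\log N}$.

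The planar case is thus the crux, and recovering that $\sqrt{\log N}$ is the step I expect to be the main obstacle: at a typical point the corrections over the $\asymp\log N$ scales should be allowed to cancel, not added in absolute value. For the lower bound one would build a random test function $f=\sum_{j=1}^J\sum_{Q\in\mathcal Q_j}\sgn(D_Q)\,2^{-j}\psi_Q$, with $\psi_Q$ a fixed bump on $Q$ and $\|\nabla\psi_Q\|_\infty\asymp2^j$, so that $\langle Nf,\mu_X-\mu_Y\rangle$ captures $\sum_j2^{-j}\sum_Q|D_Q|\asymp\sqrt N\log N$; the real work is to show $\Lip(f)=O(\sqrt{\log N})$, not the trivial $O(\log N)$ --- a uniform estimate over a fine net of points against scalewise gradients that are neither independent across scales nor freely signed --- which, with its mirror image for the upper bound, is the technical heart of \cite{AKT}. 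A modern alternative replaces the recursion by the linearization $W_2^2\approx\|\mu_X-\mu_Y\|_{\dot H^{-1}}^2$ (after mollifying at scale $N^{-1/2}$), for which Parseval gives $\mathbb E\|\mu_X-\mu_Y\|_{\dot H^{-1}}^2\asymp\frac1N\sum_{1\le|k|\lesssim\sqrt N}|k|^{-2}\asymp\frac{\log N}N$, together with $W_1\asymp W_2$ at this scale. Everything else is bookkeeping with binomial tail bounds.
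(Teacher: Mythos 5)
Most of your outline is sound: the Birkhoff--von Neumann identification $L=N\,W_1(\mu_X,\mu_Y)$, the McDiarmid reduction of the high-probability statement to the mean for $d\ge2$ (valid since $\AKT_d(N)^2/N\to\infty$), the one-dimensional identity $L=\int_0^1|B(t)|\,dt$, the dyadic upper bound giving $N^{(d-1)/d}$ for $d\ge3$, and the nearest-neighbour lower bound for $d\ge3$. (For comparison: this theorem is quoted in the paper from \cite{AKT} rather than proved; where the paper needs the ingredients, e.g.\ in Theorem \ref{AKT:boundary} and in the $d=2$ case of the proof of Theorem \ref{knot}, it uses the same nearest-neighbour estimate in codimension $\ge 3$ and imports the planar construction of \cite{AKT} wholesale.)

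The genuine gap is that the planar case --- the only part of the theorem that is not routine, and the one the paper itself calls ``the most subtle'' --- is not actually proved. Your scalewise upper bound yields $\sqrt N\,\log N$, not $\sqrt{N\log N}$, and your lower-bound plan hinges on the unproven claim that $f=\sum_j\sum_{Q}\sgn(D_Q)\,2^{-j}\psi_Q$ has $\Lip(f)=O(\sqrt{\log N})$; you flag this yourself as ``the real work,'' but as stated it is not even clearly true, since at a fixed point the $\asymp\log N$ scalewise gradients are each of order $1$ with strongly dependent signs and no mechanism for uniform cancellation is offered. Note that the actual AKT construction (reproduced in the paper's Section \ref{S:knot}, case $d=2$) is structured differently: the bumps are weighted by the discrepancies themselves rather than their signs, and a stopping-time truncation keeps $\Lip f\lesssim\sqrt{N\log N}$ deterministically while the pairing stays $\gtrsim N\log N$, so the $\sqrt{\log N}$ gain comes from that ratio, not from a square-root-cancellation bound on a sign-weighted sum. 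The ``modern alternative'' you mention hides the same difficulty: the linearization $W_2^2\approx\|\mu_X-\mu_Y\|_{\dot H^{-1}}^2$ after mollification and the comparison $W_1\asymp W_2$ at scale $N^{-1/2}$ are precisely the nontrivial steps (substantial theorems in the literature, with the lower-bound direction the delicate one), so invoking them without proof leaves both halves of the $d=2$ case unestablished.
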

Since the original geometric proof in \cite{AKT} of the most subtle case $d=2$,
this and related results have been proved many other times, often by applying
Fourier analysis; see \cite{BobL} for further references and \cite{TalBk} for a
detailed treatment of certain analytic approaches.  Another beautiful geometric
proof of the upper bound on the sphere is due to Holden, Peres, and Zhai
\cite{HPZ}.

The proofs of Theorems \ref{sphere} and \ref{cube} in general are obtained by
applying the $k=0$ results to $(n-k)$-dimensional slices of the cube and sphere.
This is the reason that the results depend only on the codimension, and for the
critical nature of codimension 2.  The lower bound in \eqref{AKTstats:E} is
obtained directly by integrating the lower bounds on these slices.  The upper
bound is obtained via a dual result on differential forms; this kind of technique
was already used in \cite{AKT} for the proof of the lower bound for the square.
Finally, \eqref{AKTstats:V} is proved using the notion of concentration of
measure due originally to Gromov and Milman \cite{GroMi}; see \cite{Ledoux} for
an extensive modern treatment.

Theorem \ref{knot} is proved similarly, except that slices no longer consist of
i.i.d.\ points.  Even this small amount of dependence complicates the argument
considerably.  We use ad hoc combinatorial arguments to overcome this, but one
might hope to generalize, for example by applying a variant of Stein's method, to
a version of Theorem \ref{thm:AKT} in the presence of dependence (one approach,
which only gives upper bounds, is discussed in \cite[\S5]{BobL}).

\subsection*{Structure of the paper}
Section \ref{S2} introduces necessary ideas and results from geometric measure
theory, and Section \ref{S3} discusses the classical AKT theorem.  In Sections
\ref{S:upper} and \ref{S:lower}, the upper and lower bounds in Theorems
\ref{sphere} and \ref{cube} are proved using tools that may generalize to other
models of random cycles.  In Section \ref{S:knot}, we discuss the extra ideas
needed to prove Theorem \ref{knot}.  Finally, Section \ref{S:concentration}
discusses the concentration of the distributions in these theorems around their
mean.

\subsection*{Acknowledgements}
I would like to thank Matthew Kahle and Robert Young for a large number of
helpful discussions over a span of three years.  Yevgeny Liokumovich provided a
crucial reference; Shmuel Weinberger asked a question which inspired Theorem
\ref{knot} and gave other helpful comments.  Finally, I would like to thank both
Robert Young and the anonymous referee for calling out a great deal of sloppy and
lazy writing in the initial draft.  I was partially supported by NSF individual
grant DMS-2001042.

\section{Definitions and preliminaries} \label{S2}

\subsection{Cycles and currents}
There are a number of useful ways to define chains and cycles from the point of
view of topology and geometric measure theory.  Algebraic topology typically uses
singular $k$-chains: formal linear combinations of continuous maps from the
$k$-simplex to a topological space $X$ (``singular simplices'').  We will usually
restrict our attention to Lipschitz simplices (that is, requiring the maps to be
Lipschitz) on a Riemannian manifold $M$.  By Rademacher's theorem, a Lipschitz
simplex $\sigma:\Delta^k \to M$ is differentiable almost everywhere and so has a
well-defined volume or \emph{mass},
\[\mass(\sigma)=\int_{\Delta^k} \sigma^*d\vol_M.\]
We can then extend by linearity to define the mass of a Lipschitz chain.




A more general notion of chain is that of a normal current.  A $k$-dimensional
\emph{current} on a manifold $M$ is simply a functional on (smooth) differential
forms, which we think of as integration over the current.  For example:
\begin{itemize}
\item Every Lipschitz chain $T$ defines a current via
  $\omega \mapsto \int_T \omega$.
\item Every compactly supported $(n-k)$-form $\alpha \in \Omega^{n-k}(M)$ defines
  a current via $\omega \mapsto \int_M \alpha \wedge \omega$.
\end{itemize}
We will write the value of $T$ on $\omega$ either as $T(\omega)$ or as
$\int_T \omega$, since currents should be thought of as generalized domains of
integration.  The boundary operator is defined via Stokes' theorem: for a current
$T$,
\[\partial T(\omega)=T(d\omega).\]
The \emph{mass} of a $k$-current $T$ on $M$, which agrees with the same notion on
Lipschitz chains, is defined to be
\[\mass(T)=\inf \{T(\omega) : \omega \in \Omega^k(M)\text{ and }
\lVert\omega\rVert_\infty=1\}.\]
Here $\lVert\omega\rVert_\infty$ is the supremum of the value of $\omega$ over all
frames of unit vectors.  For a general current, the mass of course need not be
finite.  A current $T$ is \emph{normal} if $T$ and $\partial T$ both have finite
mass; in particular any cycle (current with empty boundary) of finite mass is
normal.

\subsection{Fillings and duality}
Now, if $S$ is a normal current such that $\partial S=T$, we call it a
\emph{filling} of $T$.  The \emph{filling volume} of $T$ is
\[FV(T)=\inf \{\mass(S) \mid \partial S=T\},\]
which is always finite by the work of Federer and Fleming.  The following is an
instance of the Hahn--Banach theorem:
\begin{prop} \label{duality}
  Let $M$ be a manifold.  Then a normal $k$-current $T$ in $M$ with
  $\partial T=0$ has a filling of mass $c$ if and only if for every
  $\omega \in \Omega^k(M)$ with $\lVert d\omega\rVert_\infty \leq 1$,
  $\int_T \omega \leq c$.

  More generally, for any closed set $A \subset M$, write $\Omega^k(M,A)$ for the
  vector space of forms whose restriction to $A$ is zero.  Let $T$ be a normal
  $k$-current with $\partial T$ supported on $A$, that is, such that
  $\int_{\partial T} \alpha=0$ for any $(k-1)$-form $\alpha \in \Omega^{k-1}(M,A)$.
  Then $T$ has a filling relative to $A$ (that is, a $(k+1)$-current $S$ such
  that $\partial S-T$ is supported on $A$) of mass $c$ if and only if for every
  $\omega \in \Omega^k(M,A)$ with $\lVert d\omega\rVert_\infty \leq 1$,
  $\int_T \omega \leq c$.
\end{prop}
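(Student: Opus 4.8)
The plan is to derive both assertions from the Hahn--Banach theorem, applied to the subspace of exact $(k+1)$-forms inside $\Omega^{k+1}(M)$ with the comass norm $\lVert\cdot\rVert_\infty$. I will describe the absolute case; the relative case is the identical argument with the relevant form spaces replaced by their subspaces $\Omega^k(M,A)$, $\Omega^{k-1}(M,A)$.

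One direction is immediate. If $S$ is a normal current with $\partial S=T$ and $\mass(S)\le c$, then for any $\omega\in\Omega^k(M)$ with $\lVert d\omega\rVert_\infty\le 1$, Stokes' theorem and the definition of mass give
\[
\int_T\omega=(\partial S)(\omega)=S(d\omega)\le\mass(S)\,\lVert d\omega\rVert_\infty\le c.
\]
In the relative case, $(\partial S-T)(\omega)=0$ whenever $\omega$ restricts to $0$ on $A$ — this is precisely the meaning of ``supported on $A$'' — so the same computation applies to $\omega\in\Omega^k(M,A)$.

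For the converse, suppose $\int_T\omega\le c$ for every $\omega\in\Omega^k(M)$ with $\lVert d\omega\rVert_\infty\le 1$. The one genuine step is to observe that this forces $T$ to annihilate every closed $k$-form: if $d\omega=0$ then $\lVert d(t\omega)\rVert_\infty=0\le 1$ for all $t\in\mathbb{R}$, so $tT(\omega)\le c$ for all $t$, whence $T(\omega)=0$. (This is also where finiteness of $c$ enters.) Consequently $\ell(d\omega):=T(\omega)$ is a well-defined linear functional on the subspace $W:=d\Omega^k(M)$ of exact forms, and, applying the hypothesis to $\pm\omega/\lVert d\omega\rVert_\infty$, it satisfies $\lvert\ell(\eta)\rvert\le c\lVert\eta\rVert_\infty$ on $W$. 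By Hahn--Banach, $\ell$ extends to a linear functional $S$ on $\Omega^{k+1}(M)$ with $S(\eta)\le c$ whenever $\lVert\eta\rVert_\infty\le 1$; any such $S$ is continuous in the topology defining currents, hence is a $(k+1)$-current, and the bound says exactly that $\mass(S)\le c$. Finally $(\partial S)(\omega)=S(d\omega)=\ell(d\omega)=T(\omega)$ for all $\omega\in\Omega^k(M)$, so $\partial S=T$; since $\mass(S)\le c$ and $\mass(\partial S)=\mass(T)<\infty$, $S$ is normal and is the desired filling. For the relative version one takes instead $W:=d\Omega^k(M,A)$ — here one uses that $d$ carries $\Omega^{k-1}(M,A)$ into $\Omega^k(M,A)$, which is what makes ``$\partial T$ supported on $A$'' the natural compatibility hypothesis — and the extended $S$ then satisfies $(\partial S-T)(\omega)=0$ for every $\omega\in\Omega^k(M,A)$, i.e.\ $\partial S-T$ is supported on $A$.

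There is no real obstacle here: once one has the well-definedness observation, the argument is formal. The points that need care are the functional-analytic ones — on a noncompact $M$ one should work with compactly supported forms throughout, and one should note that a functional bounded in comass norm is automatically a current whose mass equals its dual norm — and I expect that is where most of a written-out proof goes.
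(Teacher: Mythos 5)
Your argument is correct and is exactly the route the paper intends: the paper states Proposition \ref{duality} without detailed proof, calling it ``an instance of the Hahn--Banach theorem,'' and your write-up (define the functional on exact forms via $\ell(d\omega)=T(\omega)$, check well-definedness by scaling against closed forms, extend with the comass bound, read off $\partial S=T$ and $\mass(S)\le c$, with the same scheme relative to $A$) is precisely that instance fleshed out. No gaps worth flagging beyond the functional-analytic caveats you already note.
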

In other words, the filling volume of a cycle $T$ can be redefined as
\[FV(T)=\sup \bigl\{\textstyle{\int_T \omega} \mid \omega \in \Omega^k(M)
\text{ such that }\lVert d\omega \rVert_\infty \leq 1\bigr\}\]
in both the absolute and the relative case.  Our proofs of the upper bounds in
Theorems \ref{sphere} and \ref{cube} will be based on this proposition rather
than constructing fillings directly.

Of course, knowing that a nice Lipschitz cycle has a filling which is a normal
current is not very satisfying---after all, normal currents can still be very
strange.  Luckily, given a normal current filling, we can upgrade it to a
Lipschitz chain (at the cost of multiplying the mass by a constant) using the
following classical theorem:
\begin{thm}[{Federer--Fleming deformation theorem \cite[Thm.~5.5]{FF}}]
  There is a constant $\rho(k,n)=2n^{2k+2}$ such that the following holds.  Let
  $T$ be a normal current in $\mathbf{N}_k(\mathbb{R}^n)$.  Then for every
  $\epsi>0$ we can write $T=P+Q+\partial S$, where
  \begin{enumerate}
  \item $\mass(P) \leq \rho(k,n)\mass(T)$.
  \item $\mass(Q) \leq \epsi\rho(k,n)\mass(\partial T)$.
  \item $\mass(S) \leq \epsi\rho(k,n)\mass(T)$.
  \item $P$ is a polyhedral cycle which can be expressed as an
    $\mathbb{R}$-linear combination of $k$-cells in the cubical unit
    lattice in $\mathbb{R}^n$.
  \item If $T$ is a Lipschitz chain, then so are $Q$ and $S$.
  \item If $\partial T$ is a Lipschitz chain, then so is $Q$.
  \end{enumerate}
\end{thm}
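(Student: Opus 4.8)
The plan is to carry out the classical Federer--Fleming cube-by-cube retraction, iterated once for each of the $n-k$ codimensions (we treat $k<n$; the top-dimensional case is an easy separate matter). Fix the cubical subdivision of $\mathbb{R}^n$ into cubes of side length $\epsi$; since $T$ is normal it has compact support, so only finitely many cubes are ever relevant, and after rescaling $\mathbb{R}^n$ by $\epsi^{-1}$ this is the unit lattice of condition (4). The single basic operation is the following. Given a $j$-cell $\square$ of the complex and a point $a$ in its relative interior, radial projection $\pi_a\colon\square\setminus\{a\}\to\partial\square$ is locally Lipschitz, and the straight-line homotopy $g_a\colon[0,1]\times(\square\setminus\{a\})\to\square$ from the inclusion to $\pi_a$ satisfies the homotopy formula
\[
\partial\,(g_a)_\#\bigl([0,1]\times R\bigr)=(\pi_a)_\#R-R-(g_a)_\#\bigl([0,1]\times\partial R\bigr)
\]
for any current $R$ in $\square$. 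Gluing the $\pi_a$ over all $j$-cells by a common translate $a_\nu=\epsi\nu+a$ of one reference point $a\in(0,\epsi)^n$ produces a global map $f_a$, defined away from a discrete set of cell centers, which retracts the $j$-skeleton onto the $(j-1)$-skeleton and is the identity there.

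The heart of the matter is the projection estimate. For a $k$-current $R$ supported in the $j$-skeleton, a Fubini and change-of-variables computation of how radial projection distorts $k$-dimensional volume produces a dimensional constant $c_n$ with
\[
\int_{(0,\epsi)^n}\mass\bigl((f_a)_\#R\bigr)\,da\le c_n\,\epsi^n\,\mass(R),\qquad
\int_{(0,\epsi)^n}\mass\bigl((g_a)_\#([0,1]\times R)\bigr)\,da\le c_n\,\epsi^{n+1}\,\mass(R),
\]
and likewise with $\partial R$ in place of $R$. Hence there is a positive-measure set of $a$ for which $(f_a)_\#R$, $(f_a)_\#\partial R$, $(g_a)_\#([0,1]\times R)$ and $(g_a)_\#([0,1]\times\partial R)$ have mass at most $c_n\mass(R)$, $c_n\mass(\partial R)$, $c_n\epsi\mass(R)$ and $c_n\epsi\mass(\partial R)$ respectively; discarding a further null set of $a$ we may also require that the supports of $R$ and $\partial R$ avoid the cell centers, which guarantees that $(f_a)_\#R$ is a Lipschitz chain whenever $R$ is. Taking $R=T$, $j=n$, and setting $P_1=(f_a)_\#T$, $S^{(1)}=-(g_a)_\#([0,1]\times T)$, $Q^{(1)}=-(g_a)_\#([0,1]\times\partial T)$, the homotopy formula gives $T=P_1+Q^{(1)}+\partial S^{(1)}$ with $P_1$ supported on the $(n-1)$-skeleton, $\mass(P_1)\le c_n\mass(T)$, $\mass(\partial P_1)\le c_n\mass(\partial T)$, $\mass(S^{(1)})\le c_n\epsi\mass(T)$ and $\mass(Q^{(1)})\le c_n\epsi\mass(\partial T)$.

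Now iterate this operation on $P_1$ to push it onto the $(n-2)$-skeleton, and so on for $n-k$ steps in total, accumulating $Q=\sum_iQ^{(i)}$ and $S=\sum_iS^{(i)}$. After the last step $P:=P_{n-k}$ is a $k$-current supported on the $k$-skeleton of the lattice with finite mass and finite boundary mass; by the constancy theorem applied cell by cell it is an $\mathbb{R}$-linear combination of the $k$-cells, i.e.\ a polyhedral chain, and since $\mass(\partial P)\le c_n^{\,n-k}\mass(\partial T)$ it is a cycle whenever $\partial T=0$ (in which case also $Q=0$ by (2)). Summing the resulting geometric series over the $n-k$ steps gives $\mass(P)\le c_n^{\,n-k}\mass(T)$, $\mass(S)\le c_n^{\,n-k+1}\epsi\mass(T)$ and $\mass(Q)\le c_n^{\,n-k+1}\epsi\mass(\partial T)$; a bookkeeping of the dimensional constant shows one may take $c_n^{\,n-k+1}\le 2n^{2k+2}=\rho(k,n)$, and a final rescaling of $\epsi$ puts the estimates in the form (1)--(3). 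Properties (5) and (6) are inherited at each step from the Lipschitz-preservation noted above, together with the fact that $(g_a)_\#([0,1]\times\,\cdot\,)$ sends Lipschitz chains to Lipschitz chains.

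The step I expect to be the main obstacle is the projection estimate together with making the pushforward under the maps $f_a$ --- which are only locally Lipschitz, with a mild singularity at each cell center --- rigorous: one truncates each $\pi_a$ away from its center, pushes forward, and passes to a limit while controlling masses uniformly, and this is precisely where the normality hypothesis (finiteness of both $\mass(T)$ and $\mass(\partial T)$) enters. A secondary subtlety is the appeal to the constancy theorem needed to upgrade the final skeleton-supported current to an honest polyhedral chain with real coefficients.
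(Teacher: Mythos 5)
The paper itself does not prove this statement---it is quoted from Federer--Fleming \cite[Thm.~5.5]{FF}---so the benchmark is the classical deformation argument, and your sketch does follow its architecture: cube-by-cube radial projection from a center $a$, averaging over $a$ to get the mass estimates, the homotopy formula to produce $Q$ and $S$, iteration through the $n-k$ codimensions, and a constancy-theorem step at the end. The averaging estimate, the simultaneous control of $T$ and $\partial T$, and the handling of the Lipschitz conditions (5)--(6) by forcing supports to miss the cell centers are all in line with the standard proof.

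The genuine gap is in the final step, where you assert that $P:=P_{n-k}$, being a finite-mass, finite-boundary-mass $k$-current supported in the $k$-skeleton, is an $\mathbb{R}$-linear combination of $k$-cells ``by the constancy theorem applied cell by cell.'' The constancy theorem on an open $k$-cell applies only to currents whose \emph{boundary vanishes in that open cell}, and your construction does not guarantee this: after the $n-k$ pushes, $\partial P_{n-k}$ (the pushforward of $\partial T$) is supported in the $k$-skeleton, not in the $(k-1)$-skeleton. A concrete counterexample to the statement you invoke: for $k=1$, $n=2$, a proper subsegment of a lattice edge is a normal $1$-current supported in the $1$-skeleton which is not a real combination of edges. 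The classical proof closes this hole with one more deformation: push the $(k-1)$-current $\partial P_{n-k}$ radially within each $k$-cell onto the $(k-1)$-skeleton; since $\partial\partial=0$ the homotopy formula for this step has no extra term, and adding the resulting homotopy $k$-current (of mass $\leq c\,\epsi\,\mass(\partial T)$, still supported in the $k$-skeleton) to $P$ and subtracting it from $Q$ makes $\partial P$ polyhedral, after which constancy applies cell by cell (together with the fact that a normal $k$-current supported in the $(k-1)$-skeleton is zero). Note that this correction is exactly why most statements of the theorem bound $\mass(P)$ by $c\bigl(\mass(T)+\epsi\mass(\partial T)\bigr)$ rather than by $c\,\mass(T)$ alone, so your claim to obtain (1) directly, and the asserted bookkeeping giving $\rho(k,n)=2n^{2k+2}$, would also need to be revisited once the missing step is added; one must also re-check (5)--(6) for the corrected $Q$, which works because the correction is a Lipschitz image of a homotopy of the pushforward of $\partial T$.
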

If $T$ is a normal current filling a Lipschitz chain $\partial T$, then $P+Q$ is
a Lipschitz chain filling $T$ whose mass is only greater by a multiplicative
constant $\rho(k,n)$.

It is not hard to upgrade the deformation theorem to manifolds, although the
resulting constants will depend on the manifold and its metric; see for example
\cite[Theorem 10.3.3]{ECHLP}.

\subsection{Slicing}
An important property of normal currents, introduced in \cite[\S3]{FF}, is the
ability to take ``slices'' by hyperplanes to produce currents in lower
dimensions.  We follow the exposition of F.~Morgan \cite[4.11]{Morgan}, who
follows Federer \cite[\S4.2.1]{FedBk}.

Let $u:M \to \mathbb{R}$ be a Lipschitz function on a manifold $M$.  Given a
$k$-current $T$ on $M$ and a differential $r$-form $\omega$, define the
$(k-r)$-current $T \elbow \omega$ by
\[T \elbow \omega(\eta)=T(\omega \wedge \eta).\]
In particular, this makes sense when $\omega$ is a measurable function, for
example the characteristic function $\chi_A$ of a set $A$.  In that case we can
write $T \elbow A=T \elbow \chi_A$ for the restriction of $T$ to $A$.

Given a Lipschitz function $u:M \to \mathbb{R}$, the \emph{slice} of $T$ at
$u(x)=r$ is defined by
\begin{equation} \label{def-of-slice}
  T \cap \{u(x)=r\}=(\partial T)\elbow\{u(x)>r\}-\partial(T\elbow\{u(x)>r\}).
\end{equation}
If $T$ is a normal current, then $T \cap \{u(x)=r\}$ is a normal current for
almost all $r$.  In addition, we have the following standard properties:
\begin{enumerate}
\item If $T$ is defined by integration over a (rectifiable) set $X \subset M$,
  then $T \cap \{u(x)=r\}$ is defined by integration over $M \cap \{u(x)=r\}$.
\item $\partial T \cap \{u(x)=r\}=-\partial(T \cap \{u(x)=r\})$.
\item $\mass(T) \geq \frac{1}{\Lip u}\int_{-\infty}^\infty
  \mass(T \cap \{u(x)=r\})dr.$
\end{enumerate}
A quick calculation from the definitions yields an additional property:
\begin{prop}
  If $\omega$ is a $(k-1)$-form, then
  \[T(du \wedge \omega)=-\int_{-\infty}^\infty (T \cap \{u(x)=r\})(\omega)dr.\]
\end{prop}
\begin{proof}
  Start with the equalities
  \begin{align*}
    \partial(T \elbow u)(\omega)
    &= \int_{-\infty}^\infty \partial(T \elbow \{u(x)>r\})(\omega)dr \\
    (\partial T \elbow u)(\omega)
    &= \int_{-\infty}^\infty ((\partial T) \elbow \{u(x)>r\})(\omega)dr.
  \end{align*}
  Subtract one from the other; then applying Stokes' theorem and the Leibniz rule
  on the left and \eqref{def-of-slice} on the right, we get the desired identity.
\end{proof}

In particular, by inductively slicing in different directions, we get the
following:
\begin{prop} \label{slice-coarea}
  Let $1 \leq k \leq m \leq n$, and let $T$ be an $m$-dimensional current on
  $[0,1]^n$.  Given $\vec x=(x_1,\ldots,x_k) \in \mathbb{R}^k$, let $P_{\vec x}$ be
  the plane $\{\vec x\} \times \mathbb{R}^{n-k}$.  Then there are
  $(m-k)$-dimensional currents $T \cap P_{\vec x}$, normal for almost all
  $\vec x$, such that
  \begin{gather*}
    \partial(T \cap P_{\vec x})=\partial T \cap P_{\vec x} \\
    \mass(T) \geq \int_{[0,1]^k} \mass(T \cap P_{\vec x})d\vec x.
  \end{gather*}
  In addition, given an $m$-index $I \subset \{1,\ldots,n\}$ and a function
  $f:[0,1]^n \to \mathbb{R}$,
  \[\int_T fdx_I=(-1)^m\int_{[0,1]^{I^c}} \Bigl(\int_{T \cap P_{\vec x}} f\Bigr)d\vec x.\]
\end{prop}

\section{A variation on the Ajtai--Koml\'os--Tusn\'ady theorem} \label{S3}
The results of this paper are a generalization of Theorem \ref{thm:AKT}.
Properly, the theorem of Ajtai, Koml\'os, and Tusn\'ady \cite{AKT} is in the case
$n=2$; their paper also asserts the case $n \geq 3$, which is easy and later
proved and extended in several directions by Talagrand \cite{TalHiD,TalHiD2}.
The $n=1$ case is elementary, and the proof along with a vast array of
strengthenings and generalizations can be found in \cite{BobLMem} by Bobkov and
Ledoux.  Here we need a slight variation.
\begin{thm} \label{AKT:boundary}
  Generate a cycle $Z$ of mass $N$ in $C_0([0,1]^n,\partial[0,1]^n)$ by selecting
  $N$ independent, uniformly distributed points in $[0,1]^n \times \{+1,-1\}$.
  Then there are constants $0<c_n<C_n$ such that
  \[c_n\AKT_n(N) \leq \mathbb{E}(FV(Z)) \leq C_n\AKT_n(N).\]
\end{thm}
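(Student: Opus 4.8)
The plan is to work entirely through the dual description of filling volume in Proposition~\ref{duality}. Write $Z=\sum_{i=1}^N\epsilon_i\delta_{X_i}$, with $X_i$ i.i.d.\ uniform in $[0,1]^n$ and $\epsilon_i$ i.i.d.\ uniform in $\{+1,-1\}$. Since $\Omega^0([0,1]^n,\partial[0,1]^n)$ consists of the $1$-Lipschitz functions vanishing on $\partial[0,1]^n$, Proposition~\ref{duality} gives
\[
  FV(Z)=\sup\Bigl\{\textstyle\sum_{i=1}^N\epsilon_i\,\omega(X_i)\;:\;\omega\colon[0,1]^n\to\mathbb R,\ \Lip(\omega)\le1,\ \omega|_{\partial[0,1]^n}=0\Bigr\};
\]
equivalently, by Kantorovich duality, $FV(Z)$ is the cost of transporting the positive mass $\nu^+=\sum_{\epsilon_i=+1}\delta_{X_i}$ onto the negative mass $\nu^-=\sum_{\epsilon_i=-1}\delta_{X_i}$ together with the cheapest auxiliary nonnegative measure on $\partial[0,1]^n$. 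Conditioned on the signs, $\nu^+$ and $\nu^-$ are independent uniform samples of sizes $p$ and $q=N-p$ with $p\sim\mathrm{Binomial}(N,1/2)$, so in particular $\mathbb E\lvert p-q\rvert=O(\sqrt N)$.

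For the upper bound I would estimate $\mathbb E(FV(Z))$ from the displayed formula by the usual Fourier-analytic / multiresolution argument for Theorem~\ref{thm:AKT} (as in \cite{BobL,TalBk}), which applies with no essential change: one decomposes a competitor $\omega$ along dyadic cubes down to scale $N^{-1/n}$, and on a dyadic cube $Q$ the only statistic that matters is the \emph{discrepancy} $\sum_{X_i\in Q}\epsilon_i$, which here is literally a sum of i.i.d.\ signs and so --- exactly as in the two-sample case --- has $L^2$-norm $\sqrt{N\,\vol Q}$. Summing the per-scale contributions $\sqrt N\,2^{j(n-2)/2}$ over the $O(\log N)$ scales (a geometric series dominated by the coarsest scale when $n=1$ and by the finest when $n\ge3$, and requiring the delicate orthogonality bookkeeping of \cite{AKT} when $n=2$) gives $\mathbb E(FV(Z))\le C_n\AKT_n(N)$. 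Alternatively, one may reduce the upper bound directly to Theorem~\ref{thm:AKT}: conditioning on the signs and assuming $p\ge q$, pad the $q$ negative points with $p-q$ fresh uniform points to form a second $p$-point sample, apply Theorem~\ref{thm:AKT} to the two $p$-samples, and reroute each of the $\le p-q$ positive points matched to a fresh point out to $\partial[0,1]^n$, at additional cost $\le\sqrt n\,(p-q)$; since $p\le N$ and $\mathbb E\lvert p-q\rvert=O(\sqrt N)=O(\AKT_n(N))$, this again yields the bound (using the standard expectation form of Theorem~\ref{thm:AKT}).

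For the lower bound I would again use the dual formula and exhibit a sample-dependent competitor $\omega$ with $\mathbb E\bigl(\sum_i\epsilon_i\omega(X_i)\bigr)\ge c_n\AKT_n(N)$. This is the classical AKT lower-bound construction of \cite{AKT} (see also \cite{Ledoux,TalBk}), and it transfers almost verbatim, because the cube discrepancy it is built from is now exactly the signed sum $\sum_{X_i\in Q}\epsilon_i$. Let $\psi_Q$ be a fixed-shape bump of height $\asymp 2^{-j}$ and gradient $\le1$ supported in a dyadic cube $Q$ of side $2^{-j}$ contained in the interior subcube $[1/4,3/4]^n$ --- so that any function assembled from the $\psi_Q$ automatically vanishes near $\partial[0,1]^n$ and is thus an admissible competitor --- and set $\omega_j=\sum_Q\sgn\!\bigl(\sum_{X_i\in Q}\epsilon_i\bigr)\psi_Q$. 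A Paley--Zygmund estimate gives $\mathbb E\bigl\lvert\sum_{X_i\in Q}\epsilon_i\bigr\rvert\gtrsim\sqrt{N\,\vol Q}$ for cubes with $\vol Q\gtrsim 1/N$, whence $\mathbb E\bigl(\int_Z\omega_j\bigr)\gtrsim\sqrt N\,2^{j(n-2)/2}$; for $n=1$ one takes $\omega=\omega_j$ at the coarsest admissible scale and for $n\ge3$ at the finest scale $2^{-j}\asymp N^{-1/n}$, and in both cases $\Lip(\omega)\le1$ is immediate, while the restriction to the interior subcube costs only a constant factor since it still contains $\Theta(N)$ of the points with high probability. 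The case $n=2$ is the one genuine difficulty, and it is inherited wholesale from \cite{AKT}: a single scale gives only $\sqrt N$, and extracting the extra $\sqrt{\log N}$ requires superposing $\asymp\log N$ scales while keeping the gradient bounded, followed by a second-moment argument.

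To pass from ``with high probability'' to statements about $\mathbb E(FV(Z))$: for the lower bound it suffices that $FV(Z)\ge c_n\AKT_n(N)$ on an event of probability $\ge\frac12$, since $FV(Z)\ge0$; for the upper bound one invokes the exponential concentration of AKT-type costs, or simply the deterministic bound $FV(Z)\le\sqrt n\,N$ together with a fast enough tail estimate. The main obstacle is thus nothing new: it is the $n=2$ lower bound, i.e.\ the original hard case of Theorem~\ref{thm:AKT}. The ``variation'' --- random signs instead of two balanced point sets, and a free boundary --- contributes only the cosmetic adjustments indicated above: working inside the interior subcube for the lower bound, and rerouting the $O(\sqrt N)$ excess points of the majority sign out to $\partial[0,1]^n$ for the upper bound.
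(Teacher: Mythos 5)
Your proposal is correct, and its load-bearing ideas coincide with the paper's: reduce to (or re-run) the classical AKT machinery, absorb the $O(\sqrt N)$ sign imbalance by shipping the excess points of the majority sign to $\partial[0,1]^n$, and for the lower bound use dual witnesses that vanish on the boundary so that the free-boundary option cannot help; in particular the one genuinely hard case, $n=2$, is handled in both arguments by deferring to the original multiscale construction of Ajtai--Koml\'os--Tusn\'ady. Where you diverge is in the easy regimes, and there the two treatments are genuinely different: the paper proves the $n\ge 3$ lower bound by a nearest-neighbour count (most negative points lie at distance $\ge cN^{-1/n}$ from every positive point \emph{and} from the boundary), and it treats $n=1$ by observing that the filling is unique and computing $\mathbb{E}(\mass F)$ directly via a Khinchine-type inequality of Bobkov--Ledoux, whereas you run a single uniform scheme through Proposition~\ref{duality} --- signed dyadic discrepancies with pyramid bumps at the finest scale ($n\ge3$), the coarsest scale ($n=1$), supported in $[1/4,3/4]^n$. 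Your route buys uniformity of viewpoint (everything is one discrepancy estimate, and the sample-dependent witness gives expectation bounds with no high-probability bookkeeping); the paper's ad hoc arguments buy shorter, more elementary proofs in those cases. Two small points to tighten: (i) if you take $\sgn\bigl(\sum_{X_i\in Q}\epsilon_i\bigr)$ but integrate the \emph{weighted} bump $\psi_Q$, the per-cube expectation is not literally $2^{-j}\,\mathbb{E}\bigl\lvert\sum_{X_i\in Q}\epsilon_i\bigr\rvert$; either take the sign of the weighted sum $\sum_i\epsilon_i\psi_Q(X_i)$ or restrict to points in the middle portion of $Q$ (the paper does the analogous fix in its Theorem~\ref{knot:lower}); (ii) since Theorem~\ref{thm:AKT} is stated only with high probability, your padding argument for the upper bound does need the expectation form of the classical result (or the crude deterministic bound plus a tail estimate), exactly as you note --- the paper implicitly uses the same expectation version.
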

\begin{rmk} \label{AKT:diffeo}
  Suppose that $D$ is a Riemannian ball diffeomorphic to $[0,1]^n$ and has a
  volume form.  Then by the main theorem of \cite{BMPR} (extending results of
  Moser \cite{Moser} and Banyaga \cite{Banyaga}), there is a diffeomorphism
  between the two which multiplies the volume form by a constant.  Therefore
  Theorem \ref{AKT:boundary} also holds with respect to Lebesgue measure on $D$,
  with constants $0<c_D<C_D$ depending on the ratio of the volumes and the
  bilipschitz constant of this diffeomorphism.

  Moreover, given a smooth family of Riemannian balls, \cite{BMPR} indicates that
  there is a smooth family of such diffeomorphisms.  Therefore, if the family is
  compact, one can find uniform constants for the whole family.
\end{rmk}
\begin{proof}
  There are two differences here from the results as they are typically presented
  in the probability literature, where the problem consists of matching two sets
  of random points of the same cardinality: first, the number of positive and
  negative points may not match; second, we are allowed to match points to the
  boundary as well as to points of the opposite orientation.\footnote{In fact, a
    somewhat similar, but more complicated modification was studied by Shor
    \cite{Shor}.}  We briefly explain how to modify the original proofs to deal
  with this.

  Clearly, the possibility of matching to the boundary cannot make the upper
  bounds worse.  Let's say without loss of generality there are more positive
  points.  To obtain the upper bound for $n \geq 2$, we may simply ignore some
  arbitrary set of ``extra'' positive points, matching all the others first.  By
  the central limit theorem, the expected number of extra points is
  $O(\sqrt{N})$, so the extra mass generated by matching them all to the boundary
  of the cube does not change the asymptotic answer.

  For the lower bound in the case $n=2$, we use the same stratagem of ignoring
  the ``extra'' points to create a new cycle $Z'$ with an equal number of
  positive and negative points.  From the original proof in \cite{AKT}, we know
  that there is a 1-Lipschitz function $f:[0,1]^2 \to \mathbb{R}$ which is zero
  on $\partial[0,1]^2$ and such that $\int_{Z'} f \geq c\sqrt{N\log N}$ with high
  probability.  Since with high probability the number of extra points is
  $<<\sqrt{N\log N}$, and the values of $f$ lie between $-1/2$ and $1/2$, we also
  know that $\int_Z f \geq c\sqrt{N\log N}$ with high probability.

  The lower bound in the case $n \geq 3$ is easy to see: conditional on any
  distribution of the positive points, most negative points will be at distance
  $\geq cN^{-1/n}$ from every positive point and the boundary, where $c>0$ is a
  constant depending on $n$.

  In the case $n=1$, the filling is unique up to a constant: the unique filling
  $F$ supported away from zero has density $\int_0^x Z$ at $x \in [0,1]$.  We use
  arguments found in \cite[\S3]{BobLMem} to give estimates on
  $\mathbb{E}(\mass F)$.

  The upper bound is a simple calculation:
  \begin{align*}
    \mathbb{E}(\mass F) &= \int_0^1
    \mathbb{E}\bigl(\bigl\lvert\textstyle{\int_0^x Z}\bigr\rvert\bigr) dx \\
    &\leq \int_0^1 \sqrt{\Var(\textstyle{\int_0^x Z})}dx=\frac{\sqrt{N}}{2}.
  \end{align*}
  The lower bound comes from the following classical fact, found in
  \cite{BobLMem} as Lemma 3.4:
  \begin{lem}
    Given independent mean zero random variables $\xi_1,\ldots,\xi_N$,
    \[\mathbb{E}\biggl(\biggl\lvert \sum_{k=1}^N \xi_k\biggr\rvert\biggr) \geq
    \frac{1}{2\sqrt 2}\mathbb{E}\biggl(\biggl(
    \sum_{k=1}^N \xi_k^2\biggr)^{1/2}\biggr).\]
  \end{lem}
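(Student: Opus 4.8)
The plan is to reprove the lemma (this is essentially the argument in \cite{BobLMem}) by splitting it into two independent ingredients: a symmetrization step that replaces $\sum_k \xi_k$ by a Rademacher sum with the same coefficients, losing only a factor $1/2$, and a sharp $L^1$ Khintchine inequality applied conditionally, which contributes the factor $1/\sqrt 2$.

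First I would introduce independent Rademacher variables $\epsilon_1,\dots,\epsilon_N$ (independent of the $\xi_k$) and establish
\[\mathbb{E}\Bigl\lvert \sum_k \xi_k \Bigr\rvert \geq \tfrac12\,\mathbb{E}\Bigl\lvert \sum_k \epsilon_k \xi_k\Bigr\rvert.\]
To see this, condition on the sign pattern $\epsilon=(\epsilon_1,\dots,\epsilon_N)$ and set $A=\{k:\epsilon_k=1\}$, $B=\{k:\epsilon_k=-1\}$, $S_A=\sum_{k\in A}\xi_k$, $S_B=\sum_{k\in B}\xi_k$. These partial sums are independent and each has mean zero, so conditioning on $S_A$ and using $\mathbb{E}S_B=0$ (and symmetrically) Jensen's inequality gives $\mathbb{E}\lvert S_A\rvert\le\mathbb{E}\lvert S_A+S_B\rvert$ and $\mathbb{E}\lvert S_B\rvert\le\mathbb{E}\lvert S_A+S_B\rvert$. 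Since $\sum_k\epsilon_k\xi_k=S_A-S_B$ while $\sum_k\xi_k=S_A+S_B$, the triangle inequality yields $\mathbb{E}\lvert S_A-S_B\rvert\le 2\,\mathbb{E}\lvert S_A+S_B\rvert$ for each fixed $\epsilon$; averaging over $\epsilon$ gives the displayed bound.

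Next I would instead condition on the $\xi_k$ and invoke Szarek's sharp $L^1$ Khintchine inequality: for all reals $a_1,\dots,a_N$, $\mathbb{E}_\epsilon\lvert\sum_k\epsilon_k a_k\rvert\ge\frac1{\sqrt2}\bigl(\sum_k a_k^2\bigr)^{1/2}$. Taking $a_k=\xi_k$ and integrating over the $\xi_k$ gives $\mathbb{E}\lvert\sum_k\epsilon_k\xi_k\rvert\ge\frac1{\sqrt2}\,\mathbb{E}(\sum_k\xi_k^2)^{1/2}$, and combining with the first step produces the constant $\frac1{2\sqrt2}$ on the nose. All the measure-theoretic manipulations are justified by Fubini/Tonelli since the integrands are nonnegative and there are finitely many variables, and integrability of the $\xi_k$ is automatic from the mean-zero hypothesis.

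The only step requiring a genuinely nontrivial external input is the constant $\frac1{\sqrt2}$ in the Khintchine bound, which is Szarek's optimal value rather than something elementary; everything else is Jensen plus symmetrization. If one is content with a slightly weaker constant, the classical fourth-moment estimate $\mathbb{E}(\sum_k\epsilon_k a_k)^4\le 3(\sum_k a_k^2)^2$ together with the interpolation bound $\mathbb{E}\lvert X\rvert\ge(\mathbb{E}X^2)^{3/2}/(\mathbb{E}X^4)^{1/2}$ already gives $\frac1{\sqrt3}$ in place of $\frac1{\sqrt2}$, hence $\frac1{2\sqrt3}$ overall, which would be entirely harmless for the application to $\mathbb{E}(\mass F)$ in the case $n=1$.
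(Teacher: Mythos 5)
Your proof is correct: the conditional-on-signs symmetrization loses exactly a factor $1/2$, and Szarek's sharp $L^1$ Khintchine inequality applied conditionally on the $\xi_k$ (then integrated, legitimately, since $(\sum_k\xi_k^2)^{1/2}\le\sum_k|\xi_k|$ is integrable) gives the factor $1/\sqrt{2}$, so the constant $\frac{1}{2\sqrt{2}}$ comes out exactly as stated. The paper itself does not prove this lemma---it quotes it as Lemma 3.4 of Bobkov--Ledoux \cite{BobLMem}---and your argument is essentially the standard proof underlying that citation (your sign-pattern conditioning is a clean substitute for the usual independent-copy symmetrization); the weaker $\frac{1}{2\sqrt{3}}$ fallback you mention would indeed be harmless for the application, which only needs $\mathbb{E}(\mass F)\geq c\sqrt{N}$.
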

  Let $(X_k,\sigma_k) \in [0,1] \times \{+1,-1\}$ be the $k$th chosen point.
  Then applying the lemma to $\xi_k=\sigma_k \chi_{\{X_k \leq x\}}$, we get
  \begin{align*}
    \mathbb{E}\bigl(\bigl\lvert\textstyle{\int_0^x Z}\bigr\rvert\bigr)
    &\geq \frac{1}{2\sqrt{2}}\mathbb{E}\biggl(\biggl(
    \sum_{k=1}^N \xi_k^2\biggr)^{1/2}\biggr) \\
    &\geq \frac{1}{2\sqrt{2}}\biggl(
    \sum_{k=1}^N (\mathbb{E}(|\xi_k|))^2\biggr)^{1/2}=\frac{1}{2\sqrt 2}\sqrt{N}x,
  \end{align*}
  and therefore $\mathbb{E}(\mass F) \geq \sqrt{N/32}$.
\end{proof}



\section{Proof of the upper bound} \label{S:upper}

To prove the upper bound in Theorems \ref{sphere} and \ref{cube}, we will use
Stokes' theorem; that is, we use the fact that for a cycle $Z \in C_k(M,A)$,
\begin{equation} \label{eqn:sup}
  FV(Z)=\sup\left\{\textstyle{\int_Z} \alpha : \alpha \in \Omega^k(M,A)
  \text{ such that }\lVert d\alpha \rVert_\infty=1\right\}.
\end{equation}
In fact, since $Z$ is a cycle, $\int_Z \alpha$ only depends on $\omega=d\alpha$.
To bound this quantity, we first note that any $\omega \in \Omega^{k+1}([0,1]^n)$
can be decomposed into a sum of ``basic'' forms of the form
\[\omega_I(x)dx_{i_1} \wedge \cdots \wedge dx_{i_{k+1}},\]
where $\omega_I$ is a function $\mathbb{R}^n \to \mathbb{R}$, for each subset
$\{i_1,\ldots,i_{k+1}\} \subset \{1,\ldots,n\}$.
\begin{lem} \label{lem:coIP}
  For any exact form $\omega \in \Omega^{k+1}([0,1]^n,\partial [0,1]^n)$
  (resp., $\omega \in \Omega^{k+1}([0,1]^n)$), there is
  a form $\alpha \in \Omega^k([0,1]^n,\partial [0,1]^n)$ (resp.,
  $\alpha \in \Omega^k([0,1]^n)$) given by
  \[\alpha=\sum_{\substack{I \subset [n]\\|I|=k}} \alpha_I(x)dx_I,\]
  such that $d\alpha=\omega$, and for each $I$,
  $\lVert \alpha_I \rVert_{\Lip}=\lVert d\alpha_I \rVert_\infty
  \leq C_{n,k}\lVert\omega\rVert_\infty$.
\end{lem}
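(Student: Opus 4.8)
The plan is to construct $\alpha$ directly by a Poincar\'e-type homotopy formula adapted to the cube, iterating one variable at a time, and to control the Lipschitz constant of each coefficient at every step. First I would treat the absolute case $\omega \in \Omega^{k+1}([0,1]^n)$. Since $\omega$ is exact, I may assume it is closed. Order the variables $x_1,\ldots,x_n$ and define the standard "radial-in-$x_n$" contraction: let $h$ be the operator sending a $(k+1)$-form $\omega = \sum_I \omega_I(x)\,dx_I$ to the $k$-form obtained by integrating out those terms containing $dx_n$, i.e.\ for $I = J \cup \{n\}$ with $n \notin J$, contribute $\bigl(\int_0^{x_n} \omega_{J\cup\{n\}}(x_1,\ldots,x_{n-1},t)\,dt\bigr)\,dx_J$. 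A direct computation gives $d(h\omega) = \omega - \omega'$, where $\omega'$ is a closed $(k+1)$-form in the variables $x_1,\ldots,x_{n-1}$ only (no $dx_n$, coefficients independent of $x_n$, because $d\omega=0$ forces the $x_n$-derivatives of the remaining coefficients to cancel). Then recurse on $\omega'$ in one fewer variable; after $n-k-1$ steps the remaining form lives in a $\le (k+1)$-dimensional coordinate subspace but is still a $(k+1)$-form, hence is forced to be zero once the number of available coordinates drops below $k+1$. Summing the pieces $h\omega^{(j)}$ produces $\alpha$ with $d\alpha = \omega$, and each coefficient $\alpha_I$ is (a sum of boundedly many) antiderivatives in one coordinate of the coefficients of the intermediate forms. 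The key estimate is then bookkeeping: each coefficient of $\omega^{(j)}$ is bounded in sup norm by a constant (depending on $n,k$) times $\lVert\omega\rVert_\infty$ — this needs the standard fact that the sup norm of a form's coefficients is comparable, with constants depending only on $n,k$, to $\lVert\cdot\rVert_\infty$ — and integrating over an interval of length $\le 1$ keeps the sup norm bounded, while differentiating $\int_0^{x_n}\omega_{J\cup\{n\}}(\cdots,t)\,dt$ in $x_n$ returns a coefficient of $\omega$ (bounded) and in any other variable $x_i$ returns $\int_0^{x_n}\partial_{x_i}\omega_{J\cup\{n\}}\,dt = \int_0^{x_n}(\text{coefficient of }d\omega \pm \partial_{x_n}(\cdots))\,dt$, which using $d\omega=0$ and the fundamental theorem of calculus again reduces to bounded coefficients of $\omega$. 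Thus $\lVert\alpha_I\rVert_{\Lip} \le C_{n,k}\lVert\omega\rVert_\infty$.

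For the relative case $\omega \in \Omega^{k+1}([0,1]^n,\partial[0,1]^n)$, where we also demand $\alpha \in \Omega^k([0,1]^n,\partial[0,1]^n)$, the naive homotopy above does not vanish on $\partial[0,1]^n$. Here I would instead use a "two-sided" or averaged homotopy: replace the contraction based at $x_n=0$ by one that interpolates between the cone from $x_n=0$ and the cone from $x_n=1$, using a smooth cutoff $\chi(x_n)$ with $\chi\equiv 0$ near $0$ and $\chi \equiv 1$ near $1$; concretely integrate each $x_n$-variable against the kernel $\chi'(t)\,\mathbf{1}[t \gtrless x_n]$ so that the resulting form genuinely vanishes on both faces $x_n=0$ and $x_n=1$, at the cost of a constant $\lVert\chi'\rVert_\infty$ (a fixed $n,k$-dependent constant). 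One checks $d$ of this modified operator still gives $\omega$ minus a form that is closed, relative, and lives in fewer variables, so the same recursion applies; the Lipschitz bounds pick up only the extra factor $\lVert\chi'\rVert_\infty$. Since the coefficients of $\omega$ already vanish on $\partial[0,1]^n$, the estimates on $x_i$-derivatives for $i\ne n$ go through as before. After iterating over all coordinates one gets $\alpha$ vanishing on all faces.

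The main obstacle I anticipate is not the existence of a primitive but keeping the Lipschitz (i.e.\ $C^1$) control uniform through the recursion, specifically bounding the off-diagonal partial derivatives $\partial_{x_i}\alpha_I$ for $i \notin I$: these a priori involve derivatives of $\omega$, and the point is to eliminate them using the closedness relation $d\omega=0$ to trade a transverse derivative of one coefficient for a longitudinal derivative of another, then apply the fundamental theorem of calculus. Carrying this cancellation cleanly across all $n-k-1$ iterations — and in the relative case, simultaneously maintaining vanishing on $\partial[0,1]^n$ — is the delicate part; everything else is standard multilinear algebra and one-variable calculus on an interval of length one, which only contributes harmless constants depending on $n$ and $k$.
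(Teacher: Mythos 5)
Your plan — iterate the one-variable cone operator $h\omega=\sum_{n\notin J}\bigl(\int_0^{x_n}\omega_{J\cup\{n\}}\,dt\bigr)dx_J$ and use $d\omega=0$ to trade away the bad derivatives — breaks down exactly at the step you flag as delicate, and for $k\geq 1$ the primitive it produces genuinely violates the claimed bound. Two distinct problems. First, for a direction $x_i$ with $i\in J$ there is no closedness relation to trade with at all: $\partial_{x_i}\omega_{J\cup\{n\}}$ never occurs as a summand of any coefficient of $d\omega$, since that would require the index $i$ to appear twice. Concretely, take $\omega=\sin(Mx_1)\,dx_1\wedge dx_3$ on $[0,1]^3$ ($k=1$): it is closed and exact with $\lVert\omega\rVert_\infty=1$, its restriction to $\{x_3=0\}$ vanishes so your recursion stops after one step, and your operator outputs $\alpha=\mp x_3\sin(Mx_1)\,dx_1$, whose coefficient has Lipschitz constant $M$ — even though a good primitive exists, namely $-M^{-1}\cos(Mx_1)\,dx_3$. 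Second, even for $i\notin J\cup\{n\}$ the relation $d\omega=0$ on the index set $J\cup\{i,n\}$ gives $\partial_{x_i}\omega_{J\cup\{n\}}=\pm\partial_{x_n}\omega_{J\cup\{i\}}\pm\sum_{j\in J}\partial_{x_j}\omega_{(J\setminus\{j\})\cup\{i,n\}}$; only the first term is an $x_n$-derivative removable by the fundamental theorem of calculus, and the leftover terms (present as soon as $k\geq1$) are not controlled by $\lVert\omega\rVert_\infty$. For instance, for $\omega=d\bigl(M^{-1}\sin(Mx_1)\sin(Mx_2)\,dx_4\bigr)$ on $[0,1]^4$, whose comass is bounded independently of $M$, your operator gives $\alpha_{\{1\}}=x_4\cos(Mx_1)\sin(Mx_2)$ with $\partial_{x_2}\alpha_{\{1\}}$ of size $M$; note $x_2$ is transverse to $I=\{1\}$, so this defeats not only the stated bound but even the weaker ``Lipschitz along the slice planes'' property that Theorem \ref{upperE} actually uses. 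Your relative-case modification (the cutoff kernel in the integration variable) only changes the dependence on the integration variable and does not touch either problem.

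The underlying issue is that you commit in advance to the canonical primitive obtained by applying cone operators to $\omega$ itself, whereas the lemma requires choosing the primitive adaptively: in the first example the good primitive is the ``rotated'' one, and a rotated version of the same example defeats any fixed ordering of the integration variables. The paper's proof is organized differently for precisely this reason: it inducts with the codimension $n-k$ held fixed, pushing $\omega$ forward along the last coordinate to the $k$-form $\int_0^1\omega$ on $[0,1]^{n-1}$ (as in the compactly supported Poincar\'e lemma of Bott--Tu), applying the inductive hypothesis to that \emph{lower-degree} form to get $\eta$, and assembling $\alpha$ from $\int_0^t\omega$, $\epsi(x_n)\pi^*\bigl(\int_0^1\omega\bigr)$, and $d\epsi\wedge\pi^*\eta$; the re-choice of a primitive of a lower-degree form at each stage of the induction is exactly the freedom your fixed homotopy operator lacks. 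To rescue your write-up you would need either to adopt that structure or to explain how your construction avoids the two counterexamples above — as it stands, the estimate $\lVert\alpha_I\rVert_{\Lip}\leq C_{n,k}\lVert\omega\rVert_\infty$ is not proved (and is false) for the $\alpha$ you construct when $k\geq1$, which is the case needed for the main theorems.
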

\begin{proof}
  We prove this by induction on $n$ and $k$, keeping $n-k$ constant.  In the base
  case $k=0$, we can take the function $\alpha$ to be the fiberwise integral
  $\int_0^1 \omega$ along one of the coordinates.

  To do the inductive step in the relative case, we follow the usual proof of the
  Poincar\'e lemma with compact support, following \cite[\S1.4]{BottTu}.  Fix a
  smooth bump function $\epsi:[0,1] \to [0,1]$ which is 0 near 0 and 1 near 1.
  By applying the lemma one dimension lower, we get a form
  $\eta \in \Omega^{k-1}([0,1]^{n-1},\partial [0,1]^{n-1})$ with
  $d\eta=\int_0^1 \omega$ and
  $\lVert \eta_I \rVert_{\Lip} \leq C_{n-1,k-1}\lVert\omega\rVert_\infty$. Then
  $\omega=d\alpha$ for
  \[\alpha={\textstyle \int_0^t \omega}
  -\epsi(x_n)\pi^*({\textstyle \int_0^1\omega})
  -d\epsi(x_n) \wedge \pi^*\eta,\]
  where $\pi$ is the projection to the $(n-1)$-cube along $x_n$.  Notice that
  \[\alpha_I=\begin{cases}
  -{\displaystyle\frac{d\epsi}{dx}}\eta_{I \setminus \{n\}} & \text{if }n \in I \\
  \int_0^t \omega_{I \cup \{n\}}-\epsi(x_1)\pi^*(\int_0^1 \omega_{I \cup \{n\}}) &
  \text{otherwise.}
  \end{cases}\]
  This gives us a bound on each $\lVert\alpha_I\rVert_{\Lip}$ in terms of the
  $\lVert\omega_I\rVert_{\Lip}$ and $\lVert\eta_I\rVert_{\Lip}$ as well as the
  derivatives of $\epsi$.

  For the non-relative version, we follow the same proof, mutatis mutandis,
  taking
  \[\alpha={\textstyle \int_0^t \omega}-\pi^*\eta. \qedhere\]
\end{proof}

Here and in the next section, by a \emph{random $k$-cycle} we mean a random
variable taking values in $k$-cycles, that is, a measure on the space of
$k$-cycles.  We follow the convention, common in probability theory, of blurring
the distinction between a measure on a set of objects and an object randomly
drawn from that measure.
\begin{thm} \label{upperE}
  Let $Z$ be a random $k$-cycle in $([0,1]^n,\partial[0,1]^n)$ or in $[0,1]^n$
  which satisfies the condition that for some $M>0$,
  \begin{equation} \label{E-condition}
    \mathbb{E}(FV(Z \cap P)) \leq M
  \end{equation}
  for almost all $(n-k)$-planes $P$ parallel to one of the coordinate
  $(n-k)$-planes.  Then
  \begin{equation}
    \mathbb{E}(FV(Z)) \leq {n \choose k}C_{n,k}M,
  \end{equation}
  where $C_{n,k}$ is the constant from Lemma \ref{lem:coIP}.
\end{thm}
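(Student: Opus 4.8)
The plan is to dualize and reduce the bound to the hypothesis one slice at a time. Write $A=\partial[0,1]^n$ in the relative case and $A=\emptyset$ otherwise, so that by \eqref{eqn:sup}
\[FV(Z)=\sup\Bigl\{\textstyle\int_Z\alpha:\ \alpha\in\Omega^k([0,1]^n,A),\ \lVert d\alpha\rVert_\infty=1\Bigr\}.\]
As noted right after \eqref{eqn:sup}, for a cycle $Z$ this value depends only on $\omega=d\alpha$, so I may restrict attention to the particular primitives $\alpha=\sum_{|I|=k}\alpha_I\,dx_I$ supplied by Lemma~\ref{lem:coIP}: these satisfy $\lVert\alpha_I\rVert_{\Lip}\le C_{n,k}$ for every $I\subset[n]$ with $|I|=k$, and, in the relative case, each $\alpha_I$ vanishes on the faces $\{x_j=0\}$ and $\{x_j=1\}$ for $j\notin I$ (this is exactly what ``$\alpha|_{\partial[0,1]^n}=0$'' says component-wise). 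Using $\bigl|\int_Z\alpha\bigr|\le\sum_I\bigl|\int_Z\alpha_I\,dx_I\bigr|$ we obtain, for every realization of $Z$, the deterministic bound $FV(Z)\le\sum_{|I|=k}A_I$, where $A_I$ denotes the supremum of $\bigl|\int_Z f\,dx_I\bigr|$ over all $C_{n,k}$-Lipschitz functions $f\colon[0,1]^n\to\mathbb R$ that, in the relative case, vanish wherever some coordinate $x_j$ with $j\notin I$ equals $0$ or $1$. It therefore suffices to prove $\mathbb E(A_I)\le C_{n,k}M$ for each of the $\binom nk$ index sets $I$.

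To bound one $\mathbb E(A_I)$, I would slice in the $I$-directions. Let $\pi_I\colon[0,1]^n\to\prod_{i\in I}[0,1]=:Q_I$ be the coordinate projection, so $dx_I=\pi_I^*(d\vol_{Q_I})$ and, for $\vec a\in Q_I$, the fiber $P_{\vec a}:=\pi_I^{-1}(\vec a)=\{x_i=a_i:i\in I\}$ is precisely a coordinate $(n-k)$-plane. The crucial step is the coarea/slicing identity
\[\int_Z f\,dx_I=\int_{Q_I}\Bigl(\int_{Z\cap P_{\vec a}}f\big|_{P_{\vec a}}\Bigr)\,d\vec a\qquad(f\text{ Lipschitz}),\]
i.e.\ the version of Proposition~\ref{slice-coarea} that also integrates against a test function (see \cite[4.11]{Morgan}, \cite[\S4.2.1]{FedBk}). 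By hypothesis $Z\cap P_{\vec a}$ is, for almost every $\vec a$, a well-defined $0$-cycle on $P_{\vec a}$: an absolute one (hence of net mass $0$) in the non-relative case, and one relative to $\partial P_{\vec a}=P_{\vec a}\cap\partial[0,1]^n$ otherwise. If $f$ lies in the family defining $A_I$, then $f|_{P_{\vec a}}$ is $C_{n,k}$-Lipschitz and, in the relative case, vanishes on $\partial P_{\vec a}$ (every point there has some coordinate $x_j\in\{0,1\}$ with $j\notin I$); so the degree-$0$ case of Proposition~\ref{duality} gives $\bigl|\int_{Z\cap P_{\vec a}}f|_{P_{\vec a}}\bigr|\le C_{n,k}\,FV(Z\cap P_{\vec a})$. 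Substituting into the identity and taking the supremum over $f$ yields the pointwise bound $A_I\le C_{n,k}\int_{Q_I}FV(Z\cap P_{\vec a})\,d\vec a$, and then Tonelli together with \eqref{E-condition} gives
\[\mathbb E(A_I)\le C_{n,k}\int_{Q_I}\mathbb E\bigl(FV(Z\cap P_{\vec a})\bigr)\,d\vec a\le C_{n,k}M.\]
Summing over the $\binom nk$ choices of $I$ gives $\mathbb E(FV(Z))\le\binom nk C_{n,k}M$.

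The step I expect to be the main obstacle is the coarea identity with $f$ carried along: Proposition~\ref{slice-coarea} as recorded gives only the mass inequality, so one must either invoke Federer's full slicing machinery or --- using that the Federer--Fleming deformation theorem reduces the statement to polyhedral $Z$ up to a multiplicative constant (the discrepancies produced are $\partial S$-terms controlled by $\Lip(f)\,\mass(S)$ and a mass-zero term) --- check it directly on simplices, where it becomes the classical Fubini/area-formula computation. Everything else is routine: $A_I$ is a supremum of linear functionals of $Z$ and may be computed over a countable dense set of test functions, which, with the nonnegativity of all integrands and the joint measurability of $(\vec a,\omega)\mapsto FV(Z\cap P_{\vec a})$, justifies each interchange of supremum, expectation, and $\vec a$-integration.
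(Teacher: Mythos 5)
Your argument is essentially the paper's own proof: dualize via \eqref{eqn:sup}, use Lemma \ref{lem:coIP} to choose a primitive $\alpha=\sum_I\alpha_I\,dx_I$ with $\lVert\alpha_I\rVert_{\Lip}\le C_{n,k}$, slice each term $\int_Z\alpha_I\,dx_I$ over the coordinate $(n-k)$-planes, bound each slice integral by $C_{n,k}\,FV(Z\cap P_{\vec a})$ via the $0$-dimensional case of Proposition \ref{duality}, and integrate, take expectations, and sum over the $\binom nk$ index sets. Your extra care about the relative boundary conditions and about justifying the slicing identity with a test function (which the paper uses implicitly) only fills in details of the same route, so the proposal is correct.
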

\begin{proof}
  For a $k$-form $\alpha$, $\int_Z \alpha$ depends only on $d\alpha$.  Therefore
  to estimate \eqref{eqn:sup} it is enough to show that for any $(k+1)$-form
  $\omega$ with $\lVert\omega\rVert_\infty=1$, there is a $k$-form $\alpha$ such
  that $d\alpha=\omega$ and $\int_Z \alpha \leq C_{n,k}M$.

  By Lemma \ref{lem:coIP}, we can choose
  \[\alpha=\sum_{\substack{I \subset [n]\\|I|=k}} \alpha_I(x)dx_I\]
  such that $\lVert d\alpha_I \rVert_\infty \leq C_{n,k}$ for every $I$.  Then for
  $\alpha$ ranging over all these choices of antidifferentials,
  \begin{align*}
    FV(Z) &= \sup_\alpha \int_Z \alpha
    = \sup_\alpha \sum_{\substack{I \subset [n]\\|I|=k}} \int_{[0,1]^{I^c}}
    \int_{Z \cap P_u} \alpha_Idu \\
    &\leq \sum_{\substack{I \subset [n]\\|I|=k}} \int_{[0,1]^{I^c}}
    \left(\sup_\alpha\int_{Z \cap P_u} \alpha_I\right)du 
    \leq \sum_{\substack{I \subset [n]\\|I|=k}}
    \int_{[0,1]^{I^c}} C_{n,k}FV(Z \cap P_u).
  \end{align*}
  By linearity of expectation,
  \[\mathbb{E}(FV(Z)) \leq \sum_{\substack{I \subset [n]\\|I|=k}} \int_{[0,1]^{I^c}}
  C_{n,k}\mathbb{E}(FV(Z \cap P_u)) \leq {n \choose k}C_{n,k}M. \qedhere\]
\end{proof}
\begin{proof}[Proof of Theorem \ref{cube}, upper bound.]
  Let $Z$ be a cycle in $C_k([0,1]^n,\partial[0,1]^n)$ obtained by sampling $N$
  i.i.d.\ planes from a distribution on the set of oriented $k$-planes which
  intersect nontrivially with $[0,1]^n$, such that the distribution is uniform
  (with respect to Lebesgue measure on the corresponding polytope in
  $\mathbb{R}^{n-k}$) on each set of parallel planes.

  This condition clearly implies that for every coordinate $(n-k)$-plane $P$,
  $Z \cap P$ consists of at most $N$ i.i.d.\ positive and negative points with
  probability 1.  Then Theorem \ref{AKT:boundary} implies that
  \eqref{E-condition} holds for $Z$ with $M=C_{n-k}\AKT_{n-k}(N)$.
  \[\mathbb{E}(FV(Z)) \leq 2{n \choose k}C_{n,k}C_n\AKT_{n-k}(N). \qedhere\]
\end{proof}
\begin{proof}[Proof of Theorem \ref{sphere}, upper bound.]
  We use the fact that the transverse intersection of an oriented great
  $k$-sphere with an oriented great $(n-k)$-sphere is a pair of antipodal points
  with opposite orientations.  Therefore, if $Z$ is a cycle obtained by sampling
  $N$ oriented great $k$-spheres independently from the uniform distribution,
  then for any great $(n-k)$-sphere $P$, with probability 1
  \[Z \cap P=\sum_{i=1}^N [x_i]-[-x_i]\]
  where the $x_i$ are i.i.d.\ uniform points on $S^{n-k}$.

  Consider $S^n$ as a subset of $\mathbb{R}^{n+1}$, with standard unit basis
  vectors $e_0,\ldots,e_n$.  Let $K_i^\pm$ be the preimage of the cube $[-R,R]^n$
  under central projection (that is, projection along lines through the origin)
  to the plane $x_i=\pm 1$.  If $R$ is large enough, the interiors of the
  $K_i^\pm$ cover $S^n$.  Each $K_i^+$ is disjoint from its antipodal set
  $K_i^-$; therefore for any great $(n-k)$-sphere $P$, with probability 1
  $Z \cap P \cap K_i^\pm$ consists of i.i.d.\ uniform points.  By Remark
  \ref{AKT:diffeo},
  \[\mathbb{E}(FV(Z \cap P \cap K_i^\pm)) \leq C_{n,k}\AKT_{n-k}(N),\]
  where $Z \cap P \cap K_i^\pm$ is considered as a cycle
  in $C_0(P \cap K_i^\pm, \partial(P \cap K_i^\pm))$.

  Note that central projection sends great spheres to hyperplanes.  Therefore, by
  Theorem \ref{upperE}, for each $i$ and sign,
  \begin{equation} \label{on-patch}
    \mathbb{E}(FV(Z \cap K_i^\pm)) \leq C_{n,k}\AKT_{n-k}(N).
  \end{equation}

  Set a partition of unity $\{\ph_i^\pm\}$ subordinate to $\{K_i^\pm\}$ which is
  invariant with respect to the involution, that is, such that
  $\ph_i^+(x)=\ph_i^-(-x)$.  To prove the theorem, it is enough, given a $k$-form
  $\omega \in C_{k+1}(S^n)$ with $\lVert\omega\rVert_\infty=1$, to show that for
  some (and therefore every) $\alpha \in C_k(S^n)$ with $d\alpha=\omega$,
  \[\int_Z \alpha \leq C_{n,k}\AKT_{n-k}(N).\]
  But note that
  \[\int_Z \alpha=\sum_{i=0}^n \Bigl(\int_{Z \cap K_i^+} \ph_i^+\alpha+
  \int_{Z \cap K_i^-} \ph_i^-\alpha\Bigr).\]
  Therefore it suffices to find an antidifferential and a bound separately for
  each $\ph_i^\pm \omega$.  Therefore \eqref{on-patch} suffices to prove the
  theorem.
\end{proof}

\section{Proof of the lower bound} \label{S:lower}

\begin{thm} \label{lowerE}
  Let $Z$ be a random Lipschitz $k$-cycle in $([0,1]^n,\partial[0,1]^n)$ such
  that for almost every $k$-plane
  \[P_{\vec x}=\{(x_1,\ldots,x_k)\} \times [0,1]^{n-k} \subset [0,1]^n,\]
  the slice $Z \cap P_{\vec x}$ satisfies
  \[\mathbb{E}(FV(Z \cap P_{\vec x})) \geq p(\vec x)\]
  where $p:[0,1]^k \to [0,\infty)$ is an $L^1$ function.  Then
  \[\mathbb{E}(FV(Z)) \geq \int_{[0,1]^k} p(\vec x)d\vec x.\]
\end{thm}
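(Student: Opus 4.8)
The plan is to deduce the bound from the coarea-type slicing inequality of Proposition~\ref{slice-coarea}, in the same spirit in which the easy lower bound for $n\geq 3$ in Theorem~\ref{thm:AKT} is ``integrated up'': slicing a filling of $Z$ by coordinate $k$-planes produces fillings of the slices $Z\cap P_{\vec x}$, and the masses add up in the right way.

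First I would fix a realization of $Z$. Since $Z$ is a Lipschitz cycle it has finite mass, so $FV(Z)<\infty$ and there exist normal $(k+1)$-currents $S$ with $\partial S-Z$ supported on $\partial[0,1]^n$; fix such an $S$ and write $\partial S=Z+R$ with $R$ supported on $\partial[0,1]^n$. Applying Proposition~\ref{slice-coarea} with $m=k+1$, for almost every $\vec x\in[0,1]^k$ the slice $S\cap P_{\vec x}$ is a well-defined $1$-current, satisfies $\partial(S\cap P_{\vec x})=(Z\cap P_{\vec x})+(R\cap P_{\vec x})$, and
\[\mass(S)\geq\int_{[0,1]^k}\mass(S\cap P_{\vec x})\,d\vec x.\]
Identifying $P_{\vec x}$ with $[0,1]^{n-k}$, the set $\partial[0,1]^n\cap P_{\vec x}$ becomes $\partial[0,1]^{n-k}$, so $R\cap P_{\vec x}$ is supported on $\partial P_{\vec x}$; hence $S\cap P_{\vec x}$ is a relative filling of the zero-cycle $Z\cap P_{\vec x}$ in $(P_{\vec x},\partial P_{\vec x})$, and $\mass(S\cap P_{\vec x})\geq FV(Z\cap P_{\vec x})$ for a.e. $\vec x$. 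Plugging this into the displayed inequality and letting $\mass(S)\to FV(Z)$ yields the pointwise estimate $FV(Z)\geq\int_{[0,1]^k}FV(Z\cap P_{\vec x})\,d\vec x$.

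It then remains to take the expectation and exchange it with the integral over $\vec x$. Because $FV(Z\cap P_{\vec x})\geq0$, this is just Tonelli's theorem, after which the hypothesis $\mathbb{E}(FV(Z\cap P_{\vec x}))\geq p(\vec x)$ — valid off a fixed Lebesgue-null set of $\vec x$ — completes the argument:
\[\mathbb{E}(FV(Z))\geq\int_{[0,1]^k}\mathbb{E}(FV(Z\cap P_{\vec x}))\,d\vec x\geq\int_{[0,1]^k}p(\vec x)\,d\vec x.\]

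Two points deserve to be stated with care rather than computed. The first is that slicing genuinely turns a relative filling into a relative filling of the slice; this is exactly property~(2) of the slicing operator ($\partial$ commutes with $\cap\,P_{\vec x}$) combined with the fact that the error term $R$ is confined to $\partial[0,1]^n$, and it is the only place the relative boundary conditions enter. The second — and the step most likely to be glossed over — is the joint measurability of $(\omega,\vec x)\mapsto FV(Z(\omega)\cap P_{\vec x})$ needed to apply Tonelli; this should follow from the fact that, by Proposition~\ref{duality}, $FV$ is a supremum of quantities depending measurably on the sample taken over a suitable countable family of test forms. I do not expect either point to be a genuine obstacle, so the heart of the matter is really the one-line slicing estimate.
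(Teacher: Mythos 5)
Your proof is correct and follows essentially the same route as the paper: slice a near-optimal relative filling by the coordinate planes $P_{\vec x}$, observe that each slice fills $Z \cap P_{\vec x}$ relative to $\partial P_{\vec x}$, apply the coarea inequality of Proposition~\ref{slice-coarea}, let $\epsi \to 0$, and then average. If anything you are more explicit than the paper on the two points it glosses over, namely that the boundary-error current supported on $\partial[0,1]^n$ slices into $\partial P_{\vec x}$, and the Tonelli/measurability step needed to pass from the pointwise inequality to expectations.
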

\begin{proof}
  Let $U$ be a normal current filling $Z$ such that $\mass(U) \leq FV(Z)+\epsi$,
  for any $\epsi>0$.  Then for almost all $P_{\vec x}$, there is a slice
  $U \cap P_{\vec x}$ which fills $Z \cap P_{\vec x}$, and
  \[\mass(U \cap P_{\vec x}) \geq FV(Z \cap P_{\vec x}).\]
  By Proposition \ref{slice-coarea},
  \[FV(Z)+\epsi \geq \mass(U) \geq \int_{[0,1]^k} \mass(U \cap P_{\vec x})d\vec x
  \geq \int_{[0,1]^k} FV(Z \cap P_{\vec x})d\vec x.\]
  Since this is true for every $\epsi>0$, and by linearity of expectation,
  \[\mathbb{E}(FV(Z)) \geq \int_{[0,1]^k} \mathbb{E}(FV(Z \cap P_{\vec x}))d\vec x
  \geq \int_{[0,1]^k} p(\vec x)d\vec x.\qedhere\]
\end{proof}
\begin{proof}[Proof of Theorem \ref{cube}, lower bound.]
  Let $Z$ be a cycle in $C_k([0,1]^n,\partial[0,1]^n)$ obtained by sampling $N$
  i.i.d.\ planes from a distribution on the set of oriented $k$-planes which
  intersect nontrivially with $[0,1]^n$, such that the distribution is uniform
  (with respect to Lebesgue measure on the corresponding polytope in
  $\mathbb{R}^{n-k}$) on each set of parallel planes.  Assume, perhaps by
  permuting coordinates, that this distribution is not concentrated on planes of
  the form
  \[P_{\vec x}=(x_1,\ldots,x_k) \times \mathbb{R}^{n-k}.\]

  As in the proof of the upper bound, it follows that for every $P_{\vec x}$,
  $Z \cap P_{\vec x}$ consists of i.i.d.\ positive and negative points with
  probability 1.  Moreover, the probability of a random plane $P$ intersecting
  $P_{\vec x}$ inside $[0,1]^n$ depends only on the direction of $P$ and not on
  $\vec x$.  Thus
  \[\mathbb{E}(\mass(Z \cap P_{\vec x})) \geq cN,\]
  where $c$ depends on the distribution but not on $\vec x$.
  Thus by Theorems \ref{AKT:boundary} and \ref{lowerE},
  \[\mathbb{E}(FV(Z)) \geq \frac{1}{2}C_{n-k}\AKT_{n-k}(cN). \qedhere\]
\end{proof}
\begin{proof}[Proof of Theorem \ref{sphere}, lower bound.]
  Let $Z$ be a cycle in $C_k(S^n)$ obtained by sampling $N$ independent uniformly
  distributed great $k$-spheres.  It suffices to show that for some compact
  submanifold $K \subset S^n$,
  \[FV(Z \cap K) \geq C_{n,k}\AKT_{n-k}(N)\]
  where $Z \cap K$ is considered as a cycle in $C_k(K,\partial K)$.

  Recall that for any great $(n-k)$-sphere $P$, with probability 1
  \[Z \cap P=\sum_{i=1}^N [x_i]-[-x_i]\]
  where the $x_i$ are i.i.d.\ uniform points on $S^{n-k}$.  Let $T \subset S^n$ be
  a great $k$-sphere and $T'$ the $(n-k)$-sphere consisting of points farthest
  from $T$.  We use $N_\epsi(U)$ to indicate the $\epsi$-neighborhood of the set
  $U$; then $S^n \setminus N_{\pi/4}(T')=\overline{N_{\pi/4}(T)}$ deformation
  retracts to $T$ along the orthogonal retraction
  $\rho:\overline{N_{\pi/4}(T)} \to T$.  We let $K=\rho^{-1}(K')$ where $K'$ is
  some closed ball in $T$ which does not include any point and its antipode.

  Notice that $K$ is foliated by equal-volume patches of great $(n-k)$-spheres
  $P_u$ which retract to points $u \in K'$, and also does not include any point
  and its antipode.  By Remark \ref{AKT:diffeo}, there is a bilipschitz
  diffeomorphism from $K$ to $[0,1]^n$ which sends each $P_u$ to a plane of the
  form
  \[(x_1,\ldots,x_k) \times \mathbb{R}^{n-k}\]
  in a volume-preserving way (up to a constant).  Therefore, for each $u \in K'$,
  \[\mathbb{E}(FV(Z \cap P_u \cap K)) \geq c\AKT_{n-k}(cN),\]
  and applying Theorem \ref{lowerE}, we obtain the result.
\end{proof}

\section{Proof of Theorem \ref*{knot}} \label{S:knot}

Before we prove Theorem \ref{knot}, we must give a more precise statement.

By an \emph{oriented $k$-pseudomanifold} we mean a $k$-dimensional simplicial
complex $M$ with the following properties:
\begin{itemize}
\item It is \emph{pure}, i.e.\ every simplex is contained in an $k$-dimensional
  simplex.
\item Every $k$-simplex comes with an orientation such that the sum of all the
  oriented $k$-simplices is a cycle in $C_k(M)$.
\end{itemize}
Note that this is considerably wider than the usual definition: it is just enough
so that if $M$ is equipped with the standard simplexwise metric, any Lipschitz
map from $M$ to a metric space $X$ defines a Lipschitz $k$-cycle in $X$.

We say $M$ has \emph{geometry bounded by} $L$ if every $k$-simplex intersects at
most $L$ others.

With these definitions, we restate Theorem \ref{knot}:
\begin{thm*}
  Let $M$ be an oriented $k$-pseudomanifold with $N$ vertices and geometry
  bounded by $L$.  Let $Z$ be a $k$-cycle on $[0,1]^n$ obtained by sending each
  vertex of $M$ to a uniformly random point in $[0,1]^n$ and extending linearly.
  Then there are constants $C>c>0$ depending on $n$ and $k$ such that
  \begin{equation} \label{AKTstats:knot}
    cL^{-1}\AKT_{n-k}(N)<\mathbb{E}(FV(Z))<CL\AKT_{n-k}(N).
  \end{equation}
\end{thm*}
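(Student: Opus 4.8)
The overall plan is to reuse the slicing strategy of Theorems~\ref{cube} and~\ref{sphere}: for the upper bound, estimate $\mathbb{E}(FV(Z\cap P_{\vec x}))$ on coordinate $(n-k)$-planes $P_{\vec x}$ and apply Theorem~\ref{upperE}; for the lower bound, estimate it from below and apply Theorem~\ref{lowerE}. What changes is the nature of a slice: for a.e.\ $\vec x$, $Z\cap P_{\vec x}$ is a signed point set with one point per $k$-simplex of $M$ meeting $P_{\vec x}$, and two of these points are correlated exactly when the corresponding simplices share a vertex, which happens for at most $L$ simplices at once. The first ingredient is a description of one crossing point: if a $k$-simplex is embedded linearly with independent uniform vertices, then, conditioned on meeting $P_{\vec x}$ (an event of probability bounded below when $\vec x$ lies in an interior cube), its intersection point has a density on $\{\vec x\}\times[0,1]^{n-k}$ bounded above by $(k+1)^{n-k}$ everywhere and below by a positive constant on $\{\vec x\}\times[1/4,3/4]^{n-k}$, uniformly in $\vec x$ --- the upper bound because a convex combination $\sum t_ib_i$ of the vertices' last $n-k$ coordinates has density at most $\prod_j 1/\max_i t_i\le(k+1)^{n-k}$, the lower bound because with positive probability one barycentric coordinate is close to $1$, making the law nearly uniform. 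Consequently the input needed on slices is a version of Theorem~\ref{AKT:boundary} for \emph{independent} (not necessarily identically distributed) points of bounded density; this follows from the standard proofs with cosmetic changes, since the matching upper bound uses only concentration of counts in dyadic subcubes, and the lower bounds are local.

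\smallskip\noindent\emph{Upper bound.} Properly color the intersection graph of the $k$-simplices of $M$; as it has maximum degree $\le L$, one needs at most $L+1$ colors, and each color class is a set of pairwise vertex-disjoint simplices. Its slice by $P_{\vec x}$ therefore consists of \emph{independent} signed points of density $\le(k+1)^{n-k}$, so, writing $W_c$ for the contribution of color class $c$ and $S_c$ for the number of its $k$-simplices, the bounded-density matching bound and subadditivity of $FV$ give
\[
\mathbb{E}(FV(Z\cap P_{\vec x}))\le\sum_{c=1}^{L+1}\mathbb{E}(FV(W_c))\le C_{n,k}\sum_{c=1}^{L+1}\AKT_{n-k}(S_c).
\]
Now $\sum_cS_c$, the total number of $k$-simplices, lies between $N/(k+1)$ (purity) and $N(L+1)/(k+1)$ (each vertex lies in at most $L+1$ of them), so by concavity of $\AKT_{n-k}$ the right side is at most $C_{n,k}(L+1)\AKT_{n-k}(N)$, uniformly in $\vec x$ and (by symmetry of the argument under permuting coordinates) for every family of coordinate $(n-k)$-planes. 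Theorem~\ref{upperE} then gives $\mathbb{E}(FV(Z))\le CL\,\AKT_{n-k}(N)$.

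\smallskip\noindent\emph{Lower bound.} Fix an independent set $\mathcal{G}$ in the intersection graph; since the graph has maximum degree $\le L$ we may take $|\mathcal{G}|\ge N/((k+1)(L+1))$, and this is where the factor $L^{-1}$ enters. For $\vec x$ in an interior cube the simplices of $\mathcal{G}$ are vertex-disjoint, so their crossing points form an i.i.d.\ family of $\Theta(|\mathcal{G}|)=\Theta(N/L)$ signed points of density bounded above and, on an interior box, below. One would like to conclude via Theorem~\ref{lowerE} that $\mathbb{E}(FV(Z\cap P_{\vec x}))\gtrsim\AKT_{n-k}(|\mathcal{G}|)\gtrsim L^{-1}\AKT_{n-k}(N)$; the difficulty is that $Z\cap P_{\vec x}$ also contains the $\Theta(N)$ crossing points of the simplices outside $\mathcal{G}$, which a priori could reduce the filling volume. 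This is the combinatorial heart of the proof. In codimension $n-k\ge3$ one shows that a positive fraction of the $\mathcal{G}$-crossing points lie at distance $\gtrsim N^{-1/(n-k)}$ from \emph{every} other crossing point and from $\partial[0,1]^{n-k}$, hence contribute that much each to any filling (the filling volume of a signed $0$-cycle in a cube is the optimal matching cost, which is at least $\tfrac12\sum_i\dist(p_i,\{p_j:j\neq i\}\cup\partial)$); the points within that radius of a given $\mathcal{G}$-point split into the at most $L$ simplices sharing a vertex with it --- whose contribution is killed by a Markov estimate, since a shared vertex still leaves a free vertex whose law has bounded density --- and the simplices independent of it, whose expected number that close is $O(1)$ by the density bound. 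In codimension $2$ one instead runs the multiscale $1$-Lipschitz test-function argument of \cite{AKT} on the $\mathcal{G}$-points, checking that the relevant variance stays bounded below despite the at-most-$L$ correlations, which only inflate it. Feeding the slice bound into Theorem~\ref{lowerE} yields $\mathbb{E}(FV(Z))\ge cL^{-1}\AKT_{n-k}(N)$.

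\smallskip The step I expect to be the main obstacle is this last one: in the lower bound, ruling out that the $\Theta(N)$ crossing points outside $\mathcal{G}$ conspire to cheapen the filling. There is no independence between $\mathcal{G}$ and its complement to exploit, so one cannot simply discard the complement; the bookkeeping that a definite fraction of the $\mathcal{G}$-points remain ``isolated'' --- in particular that the at most $L$ correlated neighbors of a typical point seldom land next to it --- is where the work concentrates, and where the hypothesis that the geometry is bounded by $L$ is used essentially.
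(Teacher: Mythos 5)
Your slicing skeleton and your upper bound are essentially the paper's: Lemma \ref{slice-properties} records exactly your two inputs (bounded density of a crossing point and a dependency graph of degree $\leq L$), and Lemma \ref{knot:upper} runs your coloring argument. One small mismatch there: your color classes are unbalanced and you fill them by matching to the boundary, i.e.\ you only bound the \emph{relative} filling volume of the slice; since $Z$ is an absolute cycle here, the non-relative case of Theorem \ref{upperE} needs the absolute filling volume of slices (the functions $\alpha_I$ from Lemma \ref{lem:coIP} do not vanish on $\partial P_{\vec x}$). The paper patches this by adding cancelling auxiliary points to pairs of color classes so that each class becomes a balanced $0$-cycle; your argument needs the same (cheap) fix.

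The genuine gap is in the lower bound, exactly where you predict. First, your quantitative control of correlated crossings is not justified as stated: ``a shared vertex still leaves a free vertex of bounded density'' does not bound the chance that a neighboring simplex crosses $P_{\vec x}$ near the given crossing point, because the map from the free vertex to the crossing point degenerates when the shared face comes close to $P_{\vec x}$ (then both crossings concentrate near the projection of that face, regardless of the free vertex). The correct statement is Lemma \ref{correlation} --- conditional probability $O(\sqrt{\ell})$ of landing in the same cube of side $\ell$ --- and its proof requires the Jacobian estimate of Lemma \ref{Jacobian} together with the bound of Lemma \ref{not-too-close} on the conditional density of the distance from the shared face to the slicing plane; nothing that strong follows from bounded vertex density alone. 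Second, and more seriously, in the critical codimension $2$ your plan is to run the multiscale AKT argument on the independent family $\mathcal{G}$ and assert that the at-most-$L$ correlations ``only inflate'' the dyadic imbalances. This is backwards in the dangerous scenario: adjacent simplices whose shared face lies near $P_{\vec x}$ typically cross in two nearby points of \emph{opposite} sign, so the dependent points tend to cancel local imbalances and can deflate the $\int_{Z\cap P_{\vec x}}\chi_Q$ statistics rather than inflate them. The paper's resolution is structurally different: it builds the (random) test function $f$ from the largest color class $Z_1$ alone, uses the sign symmetry of $\mu_{\vec x}$ to get $\mathbb{E}\bigl(\int_{\zeta_i}g_{\zeta_{i'}}\bigr)=0$ for independent pairs, and bounds the dependent pairs scale by scale via Lemma \ref{correlation}, so that $\bigl\lvert\mathbb{E}\bigl(\int_{Z_j}f\bigr)\bigr\rvert=O(LN)$ is lower order than $\mathbb{E}\bigl(\int_{Z_1}f\bigr)\gtrsim N\log N/L$; the same mechanism handles all codimensions, including $d=1$, which your proposal does not treat at all. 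Your isolation argument for $d\geq 3$ can likely be salvaged once the correct correlation lemma is in hand, but as written the decisive codimension-$2$ case (Theorem \ref{knot:lower}) is not proved.
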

The concentration result will be proved in the next section.

As with Theorem \ref{cube}, the proof is a direct application of Theorems
\ref{upperE} and \ref{lowerE}.  To apply these theorems, we need to understand
the filling volumes of slices of $Z$, which is more complicated in this case
because while the points are identically distributed, they are not entirely
independent.  We establish the upper bound in Lemma \ref{knot:upper}; this
depends only on the fact that every point is independent of all but a constant
number of others.  For the lower bound in Theorem \ref{knot:lower}, the argument
is more subtle: even if every point is correlated with only one other, such pairs
could be very close and have opposite signs; then the least filling would be much
smaller than for independent points.  Accordingly, we have to show that most
correlated points are still far apart.  Together, these two bounds complete the
proof.

In this section as before, fix the notation
\[P_{\vec x}=\{\vec x\} \times [0,1]^{n-k} \subset [0,1]^n, \qquad
\vec x \in [0,1]^k.\]
We start by analyzing the slice $Z \cap P_{\vec x}$.
\begin{lem} \label{slice-properties}
  Let $\vec x \in [0,1]^k$.  Then the slice $Z \cap P_{\vec x}$ is the sum of $N$
  random $0$-chains $\zeta_1,\ldots,\zeta_N$ which are identically distributed on
  $\{\pm[y] : y \in [0,1]^{n-k}\} \cup \{0\}$ according to a distribution
  $\mu_{\vec x}$ depending on $k$ and $\vec x$.  Moreover:
  \begin{enumerate}[(i)]
  \item $\mu_{\vec x}$ is invariant with respect to the involution sending a chain
    $\zeta$ to $-\zeta$;
  \item $\mu_{\vec x} \leq C(n,k)\mu_{\text{Lebesgue}}$ on $[0,1]^{n-k}$.
  \item Each $\zeta_i$ is independent of all but at most $L$ other $\zeta_j$.
  \end{enumerate}
\end{lem}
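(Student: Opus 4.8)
The plan is to unwind how the slice $Z \cap P_{\vec x}$ is built from the pseudomanifold $M$ and the random vertex placement. The cycle $Z$ is a sum of linearly embedded oriented $k$-simplices $\sigma_1,\dots,\sigma_M$ (one for each $k$-simplex of $M$, say there are $M$ of them), each of which is the affine image of the standard simplex on $k+1$ i.i.d.\ uniform vertices. For a fixed coordinate $k$-plane $P_{\vec x}$, slicing is additive, so $Z \cap P_{\vec x} = \sum_i (\sigma_i \cap P_{\vec x})$. A single affine $k$-simplex meets a generic $k$-codimensional-complement plane $P_{\vec x}$ (which is a translate of $\{\vec x\} \times \mathbb R^{n-k}$, of complementary dimension $n-k$ inside the relevant slicing picture — here codimension $k$) in at most one point, with a sign determined by the orientation; almost surely it is either a single signed point in $[0,1]^{n-k}$ or empty. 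So I set $\zeta_i := \sigma_i \cap P_{\vec x}$, a random $0$-chain supported on $\{\pm[y]: y \in [0,1]^{n-k}\} \cup \{0\}$, and $Z \cap P_{\vec x} = \sum_i \zeta_i$. That these are identically distributed is not quite automatic from the above since the combinatorics of which vertices each $\sigma_i$ uses can differ, but the \emph{distribution} of $\zeta_i$ depends only on the joint law of $k+1$ i.i.d.\ uniform points in $[0,1]^n$ and the orientation of $\sigma_i$, which is the same data for every $i$; hence all $\zeta_i$ share one distribution $\mu_{\vec x}$ depending only on $k$ and $\vec x$. (If one wishes to match the stated count of $N$ chains rather than the number of top simplices, one pads with zero chains or absorbs the bounded multiplicity from geometry bounded by $L$ into the constant; I will state this carefully.)

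Next I verify the three listed properties. For (i): reversing the orientation of $M$ — equivalently, precomposing each $\sigma_i$ with an orientation-reversing symmetry of $\Delta^k$, which is a symmetry of the uniform law on the vertices — sends each $\zeta_i$ to $-\zeta_i$ and fixes the distribution, so $\mu_{\vec x}$ is invariant under $\zeta \mapsto -\zeta$. For (ii): conditioned on being nonempty, $\zeta_i = \pm[y]$ where $y$ is the (almost surely unique) intersection point of an affine $k$-simplex with vertices i.i.d.\ uniform in $[0,1]^n$ with the fixed plane $P_{\vec x}$; pushing forward the uniform law on vertices through the intersection map, a direct change-of-variables (integrating out the "transverse" barycentric/position coordinates that pin the simplex to $P_{\vec x}$) shows the density of $y$ in $[0,1]^{n-k}$ is bounded by a constant $C(n,k)$ times Lebesgue density. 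The transverse integration has a universal bound because the affine hull of a generic simplex meets $P_{\vec x}$ transversally and the relevant Jacobian factor is integrable in dimension $k \geq 1$; the only nuance is near-degenerate simplices (almost-parallel to $P_{\vec x}$), where the small-probability of near-degeneracy compensates the large density, so the bound survives — this is the one computation I expect to need care with. For (iii): $\zeta_i$ is a deterministic function of the $k+1$ vertices of $\sigma_i$, so $\zeta_i$ and $\zeta_j$ are independent whenever $\sigma_i$ and $\sigma_j$ share no vertex; since $M$ has geometry bounded by $L$, each $k$-simplex shares a vertex with at most $L$ others (after adjusting the constant if "intersects" is interpreted as sharing a face rather than a vertex — vertex-sharing is a weaker condition, still controlled by a function of $L$ and $k$), giving the claimed bound with $L$ replaced by $C(k)L$ if necessary.

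The main obstacle is property (ii): making the density bound $\mu_{\vec x} \leq C(n,k)\,\mu_{\text{Lebesgue}}$ genuinely uniform in $\vec x \in [0,1]^k$ and genuinely finite despite near-degenerate configurations. I would handle it by writing the intersection point as an explicit function of the $k+1$ uniform vertices, computing the pushforward density via the coarea/area formula, and checking that the integrand — which blows up like an inverse power of the angle between the simplex and $P_{\vec x}$ — is integrated against a measure that vanishes at the same rate, so the total is a finite constant independent of $\vec x$ (translation of $P_{\vec x}$ only moves the cube around, not the angle statistics). Everything else is bookkeeping: the additivity of slicing (Proposition \ref{slice-coarea} and the slicing axioms for currents), the identification of $\zeta_i$ as a single signed point, and the translation of "geometry bounded by $L$" into a dependency-graph degree bound.
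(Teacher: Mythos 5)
Your decomposition of the slice into the per-simplex chains $\zeta_i$, the exchangeability argument for identical distribution, the vertex-permutation symmetry for (i), and the vertex-disjointness argument for (iii) all match the paper's proof, and the bookkeeping issues you flag ($N$ versus the number of $k$-simplices, the reading of ``intersects'') are harmless. The genuine gap is in (ii), which is the only substantive claim in the lemma and exactly the step you defer. The mechanism you describe --- a pushforward Jacobian that ``blows up like an inverse power of the angle'' between the simplex and $P_{\vec x}$, ``integrated against a measure that vanishes at the same rate, so the total is a finite constant'' --- does not yield a finite constant: if on the set where the angle lies in $[2^{-j-1},2^{-j}]$ the integrand is of order $2^{ja}$ and the measure is of order $2^{-ja}$, summing over scales gives a logarithm, not a bound $C(n,k)$. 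And this is not a hypothetical worry: a naive version of your change of variables (say, always integrating out the vertical block of one fixed vertex) really does produce a spurious logarithmic divergence from configurations where the intersection point is pinned near that vertex. So as written, the plan for (ii) is not an argument with a checkable detail missing; its stated accounting of the cancellation is wrong, and it misidentifies where the difficulty sits.

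What actually makes (ii) true --- and what the paper's proof uses --- is translation invariance in the $P_{\vec x}$-directions, with no angle analysis anywhere. Translating all $k+1$ vertices by $(\vec 0,\vec w)$ moves the intersection point by exactly $\vec w$; hence the fibers $F^{-1}(\vec y)$ over distinct $\vec y$ are disjoint translates of subsets of one set $T_{\vec x}$ inside a bounded box, and the coarea/Fubini argument bounds the density by a constant times $H_{(n+1)k}(T_{\vec x}) \leq 3^{(k+1)(n-k)}$, uniformly in $\vec x$. If you prefer your direct change-of-variables route, the same mechanism repairs it: condition on the first $k$ coordinates of all vertices; these determine whether the simplex meets $P_{\vec x}$, the sign, and the barycentric weights $\lambda_1,\ldots,\lambda_{k+1}\geq 0$ with $\sum_i\lambda_i=1$, while the intersection point is then $\sum_i \lambda_i w_i$ with the vertical blocks $w_i$ still i.i.d.\ uniform on $[0,1]^{n-k}$. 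Since some $\lambda_i \geq 1/(k+1)$, the conditional density of $\sum_i\lambda_i w_i$ is at most $(k+1)^{n-k}$, so $\mu_{\vec x} \leq (k+1)^{n-k}\mu_{\text{Lebesgue}}$ uniformly in $\vec x$. Note that in this correct parametrization near-parallel simplices contribute \emph{less} density, not more; the sensitivity you were worried about never enters. Without this (or an equivalent) ingredient, property (ii) --- the part of the lemma the paper's proof is actually devoted to, and an input to the later slice estimates --- is not established by your proposal.
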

Since the distribution of $Z$ is invariant under permuting coordinates, this
holds for any $(n-k)$-dimensional slice in a coordinate direction.
\begin{proof}
  The distribution in question is the intersection of a random linear $k$-simplex
  in $[0,1]^n$ with $P_{\vec x}$.  Property (i) is obvious from this, and (iii)
  follows since a pair of $\zeta_i$ are independent whenever the two
  corresponding simplices do not intersect.  To see (ii), consider the function
  \[F_{\vec x}:\bigl(([0,1]^n)^{k+1},\mu_{\text{Lebesgue}}\bigr) \to
  \bigl(\{\pm[y] : y \in [0,1]^{n-k}\} \cup \{0\},\mu_{\vec x}\bigr)\]
  sending each linear $k$-simplex to its intersection with $P_{\vec x}$.  This
  function is measure-preserving by definition, and its restriction to
  \[K=F_{\vec x}^{-1}\{[y] : y \in [0,1]^{n-k}\}\]
  is 1-Lipschitz.  Therefore, by the coarea formula, the density function of
  $\mu_{\vec x}$ is given by the $[n(k+1)-(n-k)]$-dimensional Hausdorff measure of
  point preimages.  Thus it is enough to bound $H_{(n+1)k}(F_{\vec x}^{-1}(\vec y))$
  for each $\vec y$.

  Let $T$ be the set of linear $k$-simplices with vertices in
  $[-1,1]^n$ which pass through $\vec 0$, and let
  \[T_{\vec x}=(T+(\vec x,\vec 0)) \cap ([0,1]^k \times [-1,1]^{n-k})^{k+1}.\]
  (Here each vertex is translated by $(\vec x,\vec 0)$.)  Notice that
  \[F_{\vec x}^{-1}(\vec y) \subset T_{\vec x}+(\vec 0,\vec y).\]
  All these translates are disjoint and their union is a subset of
  $([0,1]^k \times [-1,2]^{n-k})^k$.  Therefore, again by the coarea formula,
  \[H_{(n+1)k}(T_{\vec x}) \leq 3^{(k+1)(n-k)}.\]
  This completes the proof of (ii).
\end{proof}
Condition (iii) gives a dependency graph of degree $\leq L$ between the
$\zeta_i$.  This graph has an $(L+1)$-coloring, giving a partition of
$\{0,\ldots,N\}$ into $L+1$ disjoint subsets $I_1,\ldots,I_n$ such that for
$i \in I_j$, the $\zeta_i$ are i.i.d.

\subsection{Upper bound}
This lemma gives the upper bound to plug into Theorem \ref{upperE}:
\begin{lem} \label{knot:upper}
  For every $\vec x \in [0,1]^k$,
  \[\mathbb{E}(FV(Z \cap P_{\vec x})) \leq (L+1)C_{n,k}\AKT_{n-k}(N).\]
\end{lem}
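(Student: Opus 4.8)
The plan is to split $Z\cap P_{\vec x}$ according to the $(L+1)$-colouring of its dependency graph and then reduce each (now genuinely i.i.d.) colour class to the uniform case of Theorem~\ref{AKT:boundary}. \emph{Step 1 (reduction to i.i.d.\ colour classes).} By condition (iii) of Lemma~\ref{slice-properties} the $\zeta_i$ have a dependency graph of degree $\le L$; fix an $(L+1)$-colouring $I_1,\dots,I_{L+1}$, so that for each $j$ the chains $\{\zeta_i:i\in I_j\}$ are i.i.d.\ with law $\mu_{\vec x}$. Filling volume is subadditive (immediate from \eqref{eqn:sup}), so
\[
FV(Z\cap P_{\vec x})=FV\Bigl(\sum_{j=1}^{L+1}\sum_{i\in I_j}\zeta_i\Bigr)\le\sum_{j=1}^{L+1}FV\Bigl(\sum_{i\in I_j}\zeta_i\Bigr),
\]
so by linearity of expectation it is enough to show $\mathbb{E}\bigl(FV(\sum_{i\in I_j}\zeta_i)\bigr)\le C_{n,k}\AKT_{n-k}(N)$ for each $j$. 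Since $\lvert I_j\rvert\le N$ and adjoining chains equal to $0$ changes nothing, this reduces to the assertion that $\mathbb{E}\bigl(FV(\sum_{i=1}^N\zeta_i)\bigr)\le C_{n,k}\AKT_{n-k}(N)$ when $\zeta_1,\dots,\zeta_N$ are i.i.d.\ with law $\mu_{\vec x}$.

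\emph{Step 2 (reduction to the uniform case).} Here I would use properties (i)--(ii) of Lemma~\ref{slice-properties}: $\mu_{\vec x}$ is invariant under $\zeta\mapsto-\zeta$, and its positive part has density $f\le C(n,k)$ with respect to Lebesgue on $[0,1]^{n-k}$ (hence so does its negative part). Put $C_0=\lceil 2C(n,k)\rceil$ and let $\widetilde Z=\sum_{a=1}^{C_0N}\tilde\sigma_a[\widetilde U_a]$ be a cycle built from $C_0N$ i.i.d.\ uniform points with i.i.d.\ uniform signs, as in Theorem~\ref{AKT:boundary}. I claim $\mathbb{E}\bigl(FV(\sum_{i=1}^N\zeta_i)\bigr)\le\mathbb{E}\bigl(FV(\widetilde Z)\bigr)$. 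By the dual formula \eqref{eqn:sup}, for a $0$-cycle $W$ on $([0,1]^{n-k},\partial)$ one has $FV(W)=\sup_\psi\langle W,\psi\rangle$ with $\langle\sigma[y],\psi\rangle=\sigma\psi(y)$, the supremum over $1$-Lipschitz $\psi$ vanishing on the boundary. For a fixed $\psi$ and a single $\zeta\sim\mu_{\vec x}$, properties (i)--(ii) give
\[
\mathbb{E}\bigl[e^{t\langle\zeta,\psi\rangle}\bigr]=1+2\!\int_{[0,1]^{n-k}}\!\bigl(\cosh(t\psi(y))-1\bigr)f(y)\,dy\;\le\;\Bigl(1+{\textstyle\int}(\cosh t\psi-1)\,dy\Bigr)^{C_0}=\mathbb{E}\bigl[e^{t\langle\zeta^{\mathrm u},\psi\rangle}\bigr]^{C_0},
\]
where $\zeta^{\mathrm u}$ is a single uniform signed point; raising to the $N$-th power, $\mathbb{E}[e^{t\langle\sum_i\zeta_i,\psi\rangle}]\le\mathbb{E}[e^{t\langle\widetilde Z,\psi\rangle}]$ for every $\psi$ and $t$. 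Since $\langle W,\psi\rangle-\langle W,\psi'\rangle=\langle W,\psi-\psi'\rangle$, the centred process $\psi\mapsto\langle\sum_i\zeta_i,\psi\rangle$ is dominated, increment by increment in the moment-generating-function order, by $\psi\mapsto\langle\widetilde Z,\psi\rangle$; since the upper bound in Theorem~\ref{AKT:boundary} is proved by a chaining/Fourier argument that sees the process only through such increment bounds, the same estimate applies to $\sum_i\zeta_i$. (One could instead observe that $\sum_i\zeta_i$ is distributionally a sign-independent Bernoulli thinning of a uniform cycle with $O(N)$ points and argue via coupling; and if one checks that $\mu_{\vec x}\ge c(n,k)\mu_{\text{Lebesgue}}$ for almost every $\vec x$ with uniform constants, one could smooth $\mu_{\vec x}$ to a volume form and invoke Remark~\ref{AKT:diffeo} directly — but Lemma~\ref{slice-properties} only records the upper bound, so I would prefer the argument above.)

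\emph{Step 3 (absorbing constants).} By Theorem~\ref{AKT:boundary}, $\mathbb{E}(FV(\widetilde Z))\le C_{n-k}\AKT_{n-k}(C_0N)\le C(n,k)\AKT_{n-k}(N)$, because $\sqrt N$, $\sqrt{N\log N}$ and $N^{(d-1)/d}$ are all at most linear in $N$; combining Steps 1--3 gives the lemma with $C_{n,k}$ the product of the constants that appear. The delicate point is Step 2: one must pass from the non-uniform law $\mu_{\vec x}$ to the uniform measure without destroying the $\sqrt{\log N}$ factor that singles out codimension $2$, so a crude scale-by-scale estimate is not good enough and Theorem~\ref{AKT:boundary} must really be invoked. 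The moment-generating-function domination makes the passage go through cleanly for a fixed test function; the bookkeeping I expect to require the most care is verifying that it upgrades to the supremum over all $1$-Lipschitz test functions (equivalently, making the "sign-independent thinning" coupling fully rigorous and controlling the discarded points).
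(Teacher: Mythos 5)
Your Step 1 skeleton (an $(L+1)$-colouring of the dependency graph plus subadditivity of filling volume) is the same as the paper's, but Step 2 contains the real gap. The per-test-function moment-generating-function domination you compute is correct, yet the conclusion you draw from it---that the upper bound of Theorem \ref{AKT:boundary} ``sees the process only through such increment bounds'' and therefore transfers from the uniform law to $\mu_{\vec x}$---is precisely the statement that needs proof. Theorem \ref{AKT:boundary} is stated and proved here only for uniform points, and nothing in the paper (or citable as a black box in this form) says its proof factors through Laplace-transform bounds on increments; upgrading a fixed-$\psi$ comparison to a bound on $\mathbb{E}\bigl(\sup_\psi \int \psi\bigr)$ would amount to redoing a chaining proof of the AKT upper bound. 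The paper sidesteps this entirely by invoking a \emph{distribution-free} version of the upper bound, \cite[eq.~(12)]{BobL}: for $N$ positive and $N$ negative i.i.d.\ points drawn from an \emph{arbitrary} measure on the cube, the expected matching cost is at most $C_{n-k}\AKT_{n-k}(N)$ with a constant independent of the measure. With that input the comparison to the uniform law (and even property (ii) of Lemma \ref{slice-properties}) is unnecessary for the upper bound.

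There is a second gap in Step 1: a colour class $\sum_{i\in I_j}\zeta_i$ is generally \emph{not} a cycle (its positive and negative points need not balance), and you silently pass to the relative filling volume (test functions vanishing on $\partial[0,1]^{n-k}$, the setting of Theorem \ref{AKT:boundary}). But in the intended application the knot cycle $Z$ is an absolute cycle, so Theorem \ref{upperE} must be used with the non-relative version of Lemma \ref{lem:coIP}, and what must be bounded is the absolute filling volume of the balanced slice, i.e.\ $\sup\int_{Z\cap P_{\vec x}}\psi$ over all $1$-Lipschitz $\psi$ with no boundary condition (this is also what the Seifert-surface corollary requires). The sum of relative filling volumes of unbalanced pieces does not bound this: for $W=[x]-[y]$ with $x,y$ near opposite corners, the relative costs of $[x]$ and $-[y]$ are tiny while the absolute cost is comparable to the diameter; converting a relative bound to an absolute one costs on the order of the number of boundary-matched points, which can be $\Theta(N)$ and swamps $\AKT_{n-k}(N)$ in codimension $\geq 2$. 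The paper handles this with a balancing trick you would need to add: it adjoins up to $N$ auxiliary i.i.d.\ points distributed according to the normalized slice law $\nu_{\vec x}$, inserting each into two different colour classes with opposite signs, so that every class becomes a genuine $0$-cycle of at most $N$ positive and $N$ negative i.i.d.\ points while the total $Z\cap P_{\vec x}$ is unchanged; the distribution-free two-sample bound then applies classwise. Your Bernoulli-thinning aside runs into the same obstruction in another guise: filling volume is not monotone under deleting signed points, so coupling with a larger uniform configuration does not by itself yield an upper bound.
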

\begin{proof}
  We estimate $\mathbb{E}(FV(Z \cap P_{\vec x}))$ by separately considering each
  summand
  \[Z(\vec x,j)=\sum_{i \in I_j} \zeta_i,\qquad j=1,\ldots,L+1.\]
  These summands consist of i.i.d.~negative and positive points.

  Write $\nu_{\vec x}$ for the probability measure on $[0,1]^{n-k}$ given by
  \[\nu_{\vec x}(A)=\frac{\mu_{\vec x}\{[\vec y] : \vec y \in A\}}
       {\mu_{\vec x}\{[\vec y] : \vec y \in [0,1]^{n-k}\}}.\]
  If $\zeta$ is a random 0-cycle in $[0,1]^{n-k}$ with $N$ positive and $N$
  negative points distributed according to $\nu_{\vec x}$, then the AKT upper
  bound holds for $\zeta$: by \cite[equation (12)]{BobL}, for some constant
  $C_{n-k}$ independent of the measure,
  \begin{equation} \label{uniform-AKT}
    \mathbb{E}(FV(\zeta)) \leq C_{n-k}\AKT_{n-k}(N).
  \end{equation}

  To reduce to this situation, we note that while $Z \cap P_{\vec x}$ is a cycle
  (and therefore has an equal number of negative and positive points),
  $Z(\vec x,j)$ may not be.  We produce cycles $\tilde Z(\vec x,j)$ for
  $j=1,\ldots,L+1$ by adding up to $N$ additional i.i.d.~points distributed
  according to $\nu_{\vec x}$.  We add each point to $\tilde Z(\vec x,j)$ for two
  different $j$, with opposite signs, so that
  \[\sum_{j=1}^{L+1} \tilde Z(\vec x,j)=\sum_{j=1}^{L+1} Z(\vec x,j)=
  Z \cap P_{\vec x}.\]
  Each $\tilde Z(\vec x,j)$ is a 0-cycle consisting of at most $N$ positive and
  $N$ negative i.i.d.~points.  Therefore, by \eqref{uniform-AKT},
  \[\mathbb{E}(FV(Z \cap P_{\vec x})) \leq (L+1)C_{n,k}\AKT_{n-k}(N).\qedhere\]
\end{proof}

\subsection{Lower bound}
For the lower bound, we begin by showing that correlated points in
$Z \cap P_{\vec x}$ are usually not very close; this relationship is summarized in
Lemma \ref{correlation}.  We use this as an ingredient in the proof of Theorem
\ref{knot:lower}, which retraces some of the steps in the original proof of the
AKT theorem.

In many places, we refer to the function $F_{\vec x}$ and set $T_{\vec x}$ defined
in the proof of Lemma \ref{slice-properties}.  Also, given a subset
$A \subseteq [0,1]^{n-k}$, we write
\[[A]=\{[y]: y \in A\}\qquad\text{and}\qquad{\pm[A]}=\{\pm[y]: y \in A\}\]
for the respective sets of $0$-dimensional chains.
\begin{lem} \label{correlation}
  Assume that $\vec x \in [1/4,3/4]^k$.  Let $0<\ell<1/2$ and let $\zeta$ and
  $\zeta'$ be random variables whose values are the intersections with
  $P_{\vec x}$ of two $k$-simplices $\Delta$ and $\Delta'$ of $M$ whose vertices,
  some of which are shared, are chosen uniformly at random from $[0,1]^n$.  Let
  $Q \subset [1/4,3/4]^{n-k}$ be a cube of side length $\ell$.  Then
  \begin{equation} \label{eqn:correlation}
    \mathbb{P}\bigl[\zeta' \in \pm[Q] \mid \zeta \in [Q]\bigr]
    \leq C_{n,k}\sqrt{\ell}.
  \end{equation}
\end{lem}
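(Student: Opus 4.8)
The plan is to condition on the event that $\zeta = [y]$ with $y \in Q$, and then show that under this conditioning, the probability that $\zeta'$ also lands inside the small cube $Q$ is at most $C_{n,k}\sqrt{\ell}$. The key geometric fact, supplied by Lemma \ref{Jacobian}, is that when a $k$-simplex $\Delta'$ of $M$ has a $(k-1)$-face at distance $\geq r$ from $P_{\vec x}$, its intersection point with $P_{\vec x}$ moves with a Jacobian that is bounded below by $C_{n,k}r^{n-k}$ as the free vertices of $\Delta'$ vary; consequently, the pushforward measure governing $\zeta'$ has density bounded \emph{above} by $C_{n,k}r^{-(n-k)}$ on the locus where $\rho(\Delta') \geq r$. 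Since $Q$ has volume $\ell^{n-k}$, this gives a bound of the form $C_{n,k}(\ell/r)^{n-k}$ for the probability that $\zeta' \in \pm Q$ \emph{and} $\rho(\Delta') \geq r$. The complementary contribution, from the bad event $\rho(\Delta') < r$, is controlled by Lemma \ref{not-too-close}, which bounds it by $C_{n,k}r$ (even after conditioning on $F(\Delta') = [y']$ with $y'$ in a fixed region — here is where the hypothesis $\vec x \in [1/4,3/4]^k$ and $Q \subseteq [1/4,3/4]^{n-k}$ are used).

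The remaining step is to optimize over the free parameter $r$. We have a bound of the shape
\[
\mathbb{P}\bigl[\zeta' = \pm[y'],\, y' \in Q \mid \zeta = [y],\, y \in Q\bigr]
\leq C_{n,k}\Bigl(\bigl(\tfrac{\ell}{r}\bigr)^{n-k} + r\Bigr).
\]
Since $0 < r < 1$ and $\ell \leq 1$, if $n-k = 1$ we take $r = \sqrt{\ell}$ and both terms become $\sqrt{\ell}$. For $n - k \geq 2$, the first term $(\ell/r)^{n-k}$ decays faster in $\ell$ once $r$ is a fixed small constant, so in fact one gets an even better bound than $\sqrt{\ell}$ in higher codimension; the statement as written, with $\sqrt{\ell}$, is the uniform bound valid in all cases, so it suffices to exhibit any choice of $r$ (depending on $\ell$ but not on the simplices) making the right-hand side $\leq C_{n,k}\sqrt{\ell}$, and $r = \ell^{1/(n-k+1)}$ or simply $r = \sqrt{\ell}$ works.

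There is one subtlety to address carefully in the conditioning. The chain $\zeta'$ encodes the intersection of $\Delta'$ with $P_{\vec x}$, and $\Delta'$ shares at least one vertex with $\Delta$ (they are intersecting simplices of $M$); so conditioning on $\zeta = [y]$ restricts the joint distribution of the vertices of $\Delta \cup \Delta'$. To apply Lemma \ref{Jacobian} cleanly, I would fix the shared vertices and any other data determined by the conditioning, and view $F(\Delta')$ as a function of the remaining free vertices of $\Delta'$ alone; the coarea argument of Lemma \ref{slice-properties}, applied fiberwise, still yields that the conditional law of $\zeta'$ on $\{\rho(\Delta') \geq r\}$ has density bounded by $C_{n,k}r^{-(n-k)}$ with respect to Lebesgue measure on $[0,1]^{n-k}$, because adding the constraint only decreases the relevant Hausdorff measures of fibers. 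The main obstacle is precisely bookkeeping this conditional coarea estimate correctly — tracking which vertices are free, verifying that the constraint from $\zeta = [y]$ does not interact badly with the lower bound on the Jacobian of $F_{\Delta'}$, and confirming that the normalizing volume $\vol(F^{-1}\{[y'] : y' \in Q\})$ from Lemma \ref{not-too-close} is of the right order. Once these are in place the optimization over $r$ is routine.
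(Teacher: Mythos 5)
Your plan is essentially the paper's proof: the paper likewise splits on whether the shared $(k-1)$-face is within distance $\sqrt{\ell}$ of $P_{\vec x}$, bounding the near event by Lemma \ref{not-too-close} and, on the far event, fixing the shared face and applying Lemma \ref{Jacobian} together with the coarea formula to the single free vertex of $\Delta'$, which gives $C_{n,k}\bigl(\sqrt{\ell}+\ell^{(n-k)/2}\bigr) \leq C_{n,k}\sqrt{\ell}$ (i.e.\ your optimization collapses to the paper's choice $r=\sqrt{\ell}$). The one bookkeeping adjustment: Lemma \ref{not-too-close} should be applied to $\Delta$, whose slice $\zeta$ is the quantity being conditioned on, rather than to $\Delta'$; since the face in question is shared between the two simplices, this yields exactly the bound $C_{n,k}\sqrt{\ell}$ on the bad event that you need.
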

\begin{rmk}
  A more careful analysis based on the same principle shows that
  \[\mathbb{P}\bigl[\zeta' \in \pm[Q] \mid \zeta \in [Q]\bigr] \leq
  \begin{cases}
    C_{n,k}\ell \lvert\log \ell\rvert & \text{if }n-k=1, \\
    C_{n,k}\ell & \text{otherwise}.
  \end{cases}\]
\end{rmk}
\begin{proof}
  The idea is this: suppose that $\Delta$ and $\Delta'$ share a $(k-1)$-face
  $\Delta_0$, and let $w$ and $w'$ be the non-shared vertices of $\Delta$ and
  $\Delta'$.  If the intersections of $\Delta$ and $\Delta'$ with $P_{\vec x}$ are
  close to each other, then either the angle between $\Delta$ and $\Delta'$ is
  small (forcing $w'$ to be close to the $k$-plane containing $\Delta$), or
  $\Delta_0$ is close to $P_{\vec x}$.  We would like to show that neither of
  these happens too often.

  We will do this case in detail; the analysis when $\Delta$ and $\Delta'$ share
  a lower-dimensional face is similar.

  First, we show that $\Delta_0$ is not very often too close to $P_{\vec x}$.
  This is easy to establish globally; the tricky bit is showing that it's true
  after conditioning on $\zeta$ being supported in $Q$.  Let
  $\rho_{\vec x}(\Delta_0)$ be the distance from $\Delta_0$ to the hyperplane
  $\{\vec x\} \times \mathbb{R}^{n-k}$.  The following lemma will be proved
  later.
  \begin{lem} \label{not-too-close}
    Let $\vec x \in [1/4,3/4]^k$ and let $A \subseteq [1/4,3/4]^{n-k}$ be a set of
    positive measure.  Let $\Delta$ be a simplex with vertices chosen uniformly
    at random from $[0,1]^n$, and let $\Delta_0$ be its $0$th face.  Then
    \[\mathbb{P}\bigl[\rho_{\vec x}(\Delta_0) \leq r \mid
      F_{\vec x}(\Delta) \in [A]\bigr] \leq C_{n,k}r.\]
  \end{lem}
  In particular,
  \begin{equation} \label{closer}
    \mathbb{P}\bigl[\rho_{\vec x}(\Delta_0) \leq \sqrt{\ell} \mid
      \zeta \in [Q]\bigr] \leq C_{n,k}\sqrt{\ell}.
  \end{equation}

  Now we must show that when $\rho_{\vec x}(\Delta_0)>\sqrt{\ell}$, $\zeta'$
  doesn't very often land near $\zeta$.  The point is that the difference depends
  on the angle between $\Delta$ and $\Delta'$, and the distribution of this angle
  is not too concentrated anywhere; this is true for any given $\Delta_0$.  Fix
  $\Delta_0$ with $\rho(\Delta_0) \geq \sqrt{\ell}$, and let
  $U(\Delta_0,Q) \subseteq [0,1]^n$ be the set of points $w'$ such that
  $\zeta' \in \pm[Q]$.  Note that $U(\Delta_0,\{z\})$ is contained in the
  intersection of a $k$-plane with $[0,1]^{n(k+1)}$ and hence its $k$-dimensional
  Hausdorff norm is at most some $C_{n,k}$.  Now we would like to use the coarea
  formula to integrate with respect to $z \in Q$.  For this we need the following
  estimate, to be proved later:
  \begin{lem} \label{Jacobian}
    Given a linear $k$-simplex $\Delta \in ([0,1]^n)^{k+1}$ such that at least one
    of its $(k-1)$-faces is at distance at least $r$ from $P_{\vec x}$,
    \[\sqrt{\det\bigl([(DF_{\vec x})_\Delta]^{T}(DF_{\vec x})_\Delta\bigr)}
    \geq C_{n,k}r^{n-k}.\]
  \end{lem}
  Then by the coarea formula, for fixed $\Delta_0$,
  \[\left(\frac{\ell}{k}\right)^{\frac{n-k}{2}}\vol(U(\Delta_0,Q)) \leq
  C_{n,k}\vol(Q)\]
  and therefore
  \[\vol(U(\Delta_0,Q)) \leq C_{n,k}\ell^{\frac{n-k}{2}}.\]
  Integrating this over the domain in $[0,1]^{kn}$ containing all values of
  $\Delta_0$ such that $\rho_{\vec x}(\Delta_0) \geq \sqrt{\ell}$, we see that
  \begin{equation} \label{farther}
    \mathbb{P}\bigl[\zeta' \in \pm[Q] \mid \zeta \in [Q],
      \rho(\Delta_0) \geq \sqrt{\ell}\bigr] \leq C_{n,k}\ell^{\frac{n-k}{2}}.
  \end{equation}
  Together, \eqref{closer} and \eqref{farther} imply \eqref{eqn:correlation}.
\end{proof}
Now we prove the lemmas.
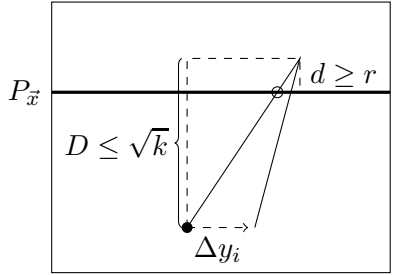
\begin{figure}
  \begin{tikzpicture}[scale=3]
    \draw (0,-0.2) -- (0,1) -- (1.5,1) -- (1.5,-0.2) -- cycle;
    \draw[very thick] (0,0.6) node[anchor=east] {$P_{\vec x}$} -- (1.5,0.6);
    \coordinate (face) at (1.1,0.75);
    \coordinate (pt) at (0.6,0);
    \coordinate (new-pt) at (0.9,0);
    \draw (new-pt) -- (face) -- (pt);
    \tikzstyle{dott}=[circle,scale=0.4];
    \node[dott,draw=black] (intersection) at (1,0.6) {};
    \draw[dashed,->] (pt) node[dott,fill=black]{} -- (0.87,0)
      node[midway,anchor=north] {$\Delta y_i$};
    \draw[dashed] (pt) -- +(0,0.75) -- (face) -- +(0,-0.15);
    \draw[snake=brace,raise snake=2pt]
    (pt) -- +(0,0.75) node[midway,anchor=east,inner sep=7pt] {$D \leq \sqrt{k}$};
    \node[right] at (1.1,0.675) {$d \geq r$};
  \end{tikzpicture}
  \caption{When a vertex moves by $\Delta y_i$, $\Delta \cap P_{\vec x}$ moves by
    $(d/D)\Delta y_i$.} \label{fig}
\end{figure}
\begin{proof}[Proof of Lemma \ref{Jacobian}]
  From Figure \ref{fig}, we see that for every $1 \leq i \leq n-k$, there
  is a unit vector $\vec v$ such that $(DF_{\vec x})_\Delta(\vec v)=c\vec e_i$, for
  $c>r/\sqrt{k}$.  Therefore
  \[\sqrt{\det\bigl([(DF_{\vec x})_\Delta]^{T} (DF_{\vec x})_\Delta\bigr)}
  \geq \left(\frac{r}{\sqrt{k}}\right)^{n-k}. \qedhere\]
\end{proof}
\begin{proof}[Proof of Lemma \ref{not-too-close}]
  Let $X$ be the event that $\rho_{\vec x}(\Delta) \leq r$, and let $Y$ be the
  event that $\Delta \cap P_{\vec x} \in [Q]$.  Bayes' rule states that
  \[\mathbb{P}(X \mid Y)=\mathbb{P}(Y \mid X)\mathbb{P}(X)/\mathbb{P}(Y).\]
  We will bound $\mathbb{P}(X \mid Y)$ by giving upper bounds for
  $\mathbb{P}(Y \mid X)$ and $\mathbb{P}(X)$ and a lower bound for
  $\mathbb{P}(Y)$.

  We first show that
  \begin{equation} \label{P(Y|X)}
    \mathbb{P}(Y \mid X) \leq 3^{(n-k)(k+1)}\vol(A).
  \end{equation}
  In fact, in this inequality, $X$ may be any constraint on the first $k$
  coordinates of each vertex of $\Delta$ (note that $\rho_{\vec x}(\Delta)$
  depends only on those coordinates).

  We start by fixing the first $k$ coordinates of each vertex.  Given a simplex
  $\Delta$ with vertices in $\mathbb{R}^n$, let $\pi(\Delta)$ be the projection
  onto the first $k$ coordinates, and fix a simplex $\Delta_\pi$ with vertices in
  $[0,1]^k$.  Let $U \subset ([-1,1]^{n-k})^{k+1}$ contain the last $n-k$
  coordinates of simplices in $\pi^{-1}(\Delta_\pi)$ whose intersection with
  $P_{\vec x}$ is $[(\vec x,\vec 0)]$.  Write $U+A$ to mean the set of translates
  of simplices in $U$ by points in $A$.  Notice that
  \begin{enumerate}
  \item $U+[0,1]^{n-k} \subseteq ([-1,2]^{n-k})^{k+1}$,
  \item the volume of $U+A$ is proportional to $\vol(A)$, and
  \item $U+\{\vec y\}$ contains $F^{-1}([\vec y]) \cap \pi^{-1}(\Delta_\pi)$.
  \end{enumerate}
  Therefore, the
  probability that a given point of $([0,1]^{n-k})^{k+1}$ is contained in $U+A$ is
  at most $3^{(n-k)(k+1)}\vol(A)$, and this in turn bounds
  \[\vol(F^{-1}([A])\cap\pi^{-1}(\Delta_\pi)).\]
  Integrating over possible values of $\Delta_\pi$ gives \eqref{P(Y|X)}.

  Now we show that $\mathbb{P}(X) \leq C_{n,k}r$.  Choosing $k$ points in
  $\mathbb{R}^k$ uniformly at random induces a probability measure on the set of
  $(k-1)$-planes, whose density at a plane $P$ is proportional to
  $\vol(P \cap [0,1]^k)^k$.  This density is bounded above by some $C_{n,k}$, and
  so the set of planes whose distance from $\vec x$ is at most $r$ has measure
  $\leq C_{n,k}r$.

  Finally we must give a lower bound for $\mathbb{P}(Y)$, that is, the volume of
  \[\Sigma_Y=\{\Delta: F_{\vec x}(\Delta) \in [A]\} \subseteq [0,1]^{n(k+1)}.\]
  For this, we note that when $\vec x$ and $\vec y$ have all coordinates in
  $[1/4,3/4]$,
  \[F_{\vec x}^{-1}([\vec y]) \supseteq T_{\vec x} \cap
  ([-1/4,1/4]^n+\{(\vec x,\vec 0)\})+\{(\vec 0,\vec y)\}.\]
  This is a translate of a set which is independent of $\vec x$ and $\vec y$.
  Taking all the translates with respect to vectors in $A$, we see that
  $\Sigma_Y$ contains a set whose volume depends linearly on $\vol(A)$ and is
  easily seen to be positive.

  Thus overall we get $\mathbb{P}(X \mid Y) \leq C_{n,k}r$.
\end{proof}

Finally we have the tools we need to prove the lower bound for Theorem
\ref{knot}.
\begin{thm} \label{knot:lower}
  For every $\vec x \in [1/4,3/4]^k$,
  \[\mathbb{E}(FV(Z \cap P_{\vec x})) \geq c_{n,k}L^{-1}\AKT_{n-k}(N).\]
\end{thm}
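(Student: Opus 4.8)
The plan is to reduce the lower bound to the corresponding case of the classical AKT theorem for i.i.d.\ uniform points (Theorem~\ref{thm:AKT}), at the cost of losing powers of $L$, by isolating a large independent sub-family of the slice points and then showing the correlations with the discarded points are too weak to help a filling. Recall from Lemma~\ref{slice-properties} that $Z \cap P_{\vec x}=\sum_{i=1}^N \zeta_i$ where the $\zeta_i$ are identically distributed according to $\mu_{\vec x}$, each $\zeta_i$ depends on at most $L$ others, and $\mu_{\vec x}$ dominates Lebesgue measure from above. Using the $(L+1)$-coloring of the dependency graph noted after that lemma, pick the largest color class $I_j$, so $|I_j| \geq N/(L+1)$, and set $W=\sum_{i \in I_j} \zeta_i$: the chains $\{\zeta_i\}_{i \in I_j}$ are i.i.d.\ with law $\mu_{\vec x}$. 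After discarding the ``extra'' unmatched points as in the proof of Theorem~\ref{AKT:boundary} (the deficit is $O(\sqrt{|I_j|})$ in expectation, hence negligible at the $\AKT$ scale for $n-k \geq 2$, and handled directly for $n-k=1$), one wants to say $\mathbb{E}(FV(W)) \geq c_{n,k}\AKT_{n-k}(|I_j|) \geq c_{n,k}(L+1)^{-1}\AKT_{n-k}(N)$. Because $\mu_{\vec x}$ is only bounded above (not below) by a multiple of Lebesgue measure, we cannot directly quote the uniform-measure AKT lower bound; instead we appeal to Remark~\ref{AKT:diffeo} on a sub-cube $[1/4,3/4]^{n-k}$ where $\mu_{\vec x}$ is comparable to Lebesgue measure from both sides (here the restriction $\vec x \in [1/4,3/4]^k$ enters), or rerun the original AKT lower-bound argument (a $1$-Lipschitz test function / a distance-to-nearest-neighbor count) for a measure with bounded density.

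The real content is passing from a lower bound on $FV(W)$ to a lower bound on $FV(Z \cap P_{\vec x})$, since $Z \cap P_{\vec x}=W+V$ where $V=\sum_{i \notin I_j}\zeta_i$ and $V$ is correlated with $W$. The idea is the dual one from \eqref{eqn:sup}: produce a $0$-form (function) $\alpha$ on the relevant sub-cube, $1$-Lipschitz and vanishing on the boundary, with $\int_W \alpha$ large, and show $\int_V \alpha$ is, with high probability, not large and negative enough to cancel it. For $n-k=1$ this is easy since the filling is essentially unique and one can compute variances directly. For $n-k \geq 3$ the $\AKT$ function is just $N^{(d-1)/d}$ and the bound follows from counting points with no near neighbor, which is robust under adding $O(N/L)$ extra identically distributed points. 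The delicate case is $n-k=2$: here I would use the multi-scale structure of the original AKT lower bound, on each dyadic scale $\ell$ estimating via Lemma~\ref{correlation} that the conditional probability that a correlated partner $\zeta'$ of $\zeta$ lands in the same side-$\ell$ cube $Q$ is $\lesssim \sqrt{\ell}$ — small enough that, summed against the $L$ possible partners and the $\sim \ell^{-(n-k)}$ cubes at scale $\ell$, the expected ``collateral'' mass that $V$ can contribute to frustrate the test function at scale $\ell$ is a small fraction of what $W$ contributes. Summing the surviving contributions over the $\sim \log N$ scales gives $\mathbb{E}(FV(Z \cap P_{\vec x})) \gtrsim L^{-2}\sqrt{N\log N}$, the two powers of $L$ coming from (i) restricting to one color class and (ii) the union bound over the $\leq L$ correlated partners when controlling $V$.

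I expect the main obstacle to be exactly this last point: making rigorous the claim that the mild correlations quantified by Lemma~\ref{correlation} cannot be exploited to cheaply fill $Z \cap P_{\vec x}$. Concretely, one must show that conditioning on the configuration of the i.i.d.\ family $\{\zeta_i\}_{i\in I_j}$, the conditional law of each correlated $\zeta_\ell$ still places only $O(\sqrt{\ell}\,)$ mass in any fixed side-$\ell$ cube, uniformly, so that the standard ``many points, each far from its would-be match'' or ``energy against a bump function'' lower-bound mechanism degrades by at most a constant factor per scale. The honest way to do this is to re-examine the AKT $n=2$ proof (or one of the Fourier-analytic variants referenced in \cite{BobL,TalBk}) and check that each inequality used only needs an upper bound on a certain conditional correlation density, then plug in Lemma~\ref{correlation}; the $\sqrt{\ell}$ (rather than $\ell$) in that lemma is wasteful but still summable against $\ell^{-2}$ over dyadic scales down to $N^{-1/2}$, so it suffices. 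Everything else — the coloring, the discarding of extra points, the change-of-measure via Remark~\ref{AKT:diffeo} — is routine bookkeeping.
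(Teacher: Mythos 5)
Your core strategy is the same as the paper's: extract the largest color class of the dependency graph so that the retained $\zeta_i$ are i.i.d., use Remark \ref{AKT:diffeo} on $[1/4,3/4]^{n-k}$ (which is exactly where $\vec x\in[1/4,3/4]^k$ enters) to reduce to a constant density, build the dual witness $f$ for the AKT lower bound from that class alone, and then argue via Lemma \ref{correlation} that the discarded, weakly correlated points cannot cancel $\int_{Z_1}f$, with the multi-scale AKT construction in codimension $2$. That is precisely how the paper proceeds (pyramids at scale $N^{-1/d}$ with a sign-of-imbalance coefficient for $d\geq 3$, the AKT stopping-time sum for $d=2$, a coarse truncated bump sum for $d=1$), and your identification of the delicate point---rechecking that the AKT lower-bound mechanism only needs a bound on the conditional correlation density, then plugging in Lemma \ref{correlation}---matches the paper's treatment.

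Two of your shortcuts are wrong as stated, though, and must be routed through that same mechanism. First, the discarded set is not ``$O(N/L)$ extra points'': the largest color class only satisfies $\lvert I_1\rvert\geq N/(L+1)$, so the complement can contain on the order of $N$ points, up to $L$ times \emph{more} than what you keep. Hence your $d\geq 3$ claim that counting points with no near neighbor ``is robust under adding the extras'' does not stand on its own: the extras are comparable in number to, and correlated with, the retained points, and a priori they could sit next to retained points of opposite sign and make the filling cheap. What rules this out is (a) the sign symmetry of $\mu_{\vec x}$ (Lemma \ref{slice-properties}(i)) together with independence, which forces every \emph{uncorrelated} discarded $\zeta_i$ to pair to zero in expectation against any test function built from $Z_1$, and (b) Lemma \ref{correlation} for the at most $L$ correlated partners. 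You invoke (b) in the $d=2$ discussion but never state (a), and (a) is what disposes of the uncorrelated majority of $V$; without it your ``expected collateral mass'' estimate has no justification, and with it the $d\geq 3$ and $d=1$ cases should simply be run through the same test-function scheme (in $d=1$ your ``compute variances directly'' also fails as stated, since the lemma from \cite{BobLMem} lower-bounding $\mathbb{E}\lvert\sum\xi_k\rvert$ requires independent summands; the paper instead truncates the bump heights to control $\Lip f$). Finally, since the target is a bound on $\mathbb{E}(FV)$ and $\Lip f$ is kept deterministically bounded, controlling $\mathbb{E}(\int_{Z_j}f)$ suffices; your ``with high probability'' phrasing for $\int_V\alpha$ would demand concentration estimates you do not need.
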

\begin{proof}
  Write $d=n-k$.  We are trying to find a lower bound for the expected filling
  length of a certain distribution on $0$-cycles in $[0,1]^d$ which is defined as
  a sum of a large number of identically distributed points.  This is very
  similar to the original AKT theorem, in which the points are, in addition,
  independent and uniformly distributed.  In \cite{AKT}, the lower bound is
  proved using the dual definition of filling volume given in \eqref{eqn:sup},
  which we restate here for a $0$-cycle $Z_0$ in $[0,1]^d$:
  \[FV(Z_0)=\sup\bigl\{\textstyle{\int_{Z_0}} f:[0,1]^d \to \mathbb{R}
  \text{ such that }\Lip(f) \leq 1\bigr\}.\]
  For the case $d=2$, Ajtai, Koml\'os and Tusn\'ady construct a
  $\sqrt{N\log N}$-Lipschitz function $f$ whose integral over $Z_0$ is usually at
  least $cN\log N$.  The filling volume $FV(Z_0)$ is then bounded below by the
  ratio of the integral to the Lipschitz constant.

  When $d=2$, we construct $f$ the same way as in \cite{AKT}, but on a modified
  point set in order to overcome two issues:
  \begin{enumerate}
  \item The points of $Z \cap P_{\vec x}$ are not uniformly distributed.  We use a
    bilipschitz rescaling of the domain to make sure that they are.
  \item The points of $Z \cap P_{\vec x}$ are not independent.  We instead
    construct the function $f$ to have a large integral over a large independent
    subset, and then use Lemma \ref{correlation} to show that the integral over
    the remaining points is small.
  \end{enumerate}
  For $d=1$ and $d \geq 3$, we use the same reduction steps, but the function $f$
  is somewhat simpler.  We start by more precisely describing the common argument
  and then give the detailed construction of $f$ for each case.

  Let $\zeta_i$ be the chain-valued random variables corresponding to
  intersections of $k$-simplices of $Z$ with $P_{\vec x}$.  Recall that
  $\zeta_i=\pm [v_i]$ or $\{0\}$, with $v_i \in [0,1]^d$ identically distributed
  according to a density which is bounded below on $[1/4,3/4]^d$.  Moreover, this
  bound is uniform with respect to $\vec x \in [1/4,3/4]^k$.  Thus, following
  Remark \ref{AKT:diffeo}, there is a uniformly bilipschitz family of
  diffeomorphisms
  \[\ph_{\vec x}:[1/4,3/4]^d \to [0,1]^d\]
  which send this density to a constant times the standard volume form.  In
  particular, Lemma \ref{correlation} still holds for the $\zeta_i$ after
  applying the diffeomorphism.  We now write $\zeta_i$ for
  $\ph_{\vec x}(\zeta_i)$.

  In each case, consider an $(L+1)$-coloring $I_1,\ldots,I_{L+1}$ of the
  dependency graph between the $\zeta_i$, with the colored subsets ordered from
  largest to smallest.  Note that each of the $\zeta_i$ is correlated with
  $\zeta_{i'}$ for at most $L$ values of $i'$.

  We write $Z_j=\sum_{i \in I_j} \zeta_i$.  In each case, we show that $Z_1$ is
  hard to fill by constructing a Lipschitz function $f$ such that
  \[\int_{Z_1} f \geq \frac{c_{n,k}}{L}\AKT_{n-k}(N)\Lip f
  \text{ with high probability.}\]
  Then we show that $\mathbb{E}(\int_{Z_j} f)$ for each $j \neq 1$ is small enough
  that it does not affect the overall asymptotics.

  In each case, the function $f$ is constructed as a sum of simpler functions.
  Given a cube $Q \subseteq [0,1]^d$, let $\Delta_Q:[0,1]^d \to \mathbb{R}$ be
  the function supported on $Q$ whose graph is a symmetric pyramid with base $Q$
  and height 1.  We also write $\Delta^r_v$ for the cube in the lattice of side
  length $r$ which contains $v \in [0,1]^d$.  The function $f$ will consist of a
  sum of scaled copies of $\Delta_Q$, each reflecting the ``imbalance'' of
  positive and negative points on the cube $Q$.  The main difference between
  different dimensions is the scale of these cubes: for $d \geq 3$, the cubes are
  at the smallest scale (comparable to the average distance between neighboring
  points), for $d=1$ they are at the largest scale (comparable to 1), and for
  $d=2$ we use cubes at many scales, as in the original proof of \cite{AKT}.

  In each case, we construct $f$ by means of an auxiliary function
  \[g(x)=\sum_{i \in I_1} g_{\zeta_i}(x)\]
  (in the case $d=2$, each $\zeta_i$ is associated to many summands $g^r_{\zeta_i}$
  at different scales, which we consider separately).  This $g$ may not satisfy
  the desired upper bound on the Lipschitz constant, so we remove some of the
  summands where they are too concentrated to produce $f$.  Whenever $\zeta_i$ is
  independent from $\zeta_{i'}$,
  $\mathbb{E}\bigl(\int_{\zeta_{i'}} g_{\zeta_i}(x)\bigr)=0$, and so for every
  $j \neq 1$ we can write
  \[\bigl\lvert\mathbb{E}\bigl({\textstyle\int_{Z_j}} g\bigr)\bigr\rvert
  \leq \sum_{i \in I_j} \sum \{\bigl\lvert\mathbb{E}\bigl(
       {\textstyle\int_{\zeta_i}}g_{\zeta_{i'}}\bigr)\bigr\rvert
  \mid \zeta_{i'}\text{ is correlated with }\zeta_i\}.\]
  By Lemma \ref{correlation}, this correlation is not too high, and therefore we
  can bound each of the summands.  By the construction of $f$, this also bounds
  $\bigl\lvert\mathbb{E}\bigl(\int_{Z_j} f\bigr)\bigr\rvert$.

  If we tune everything correctly, we get that for each $j \neq 1$,
  \[\bigl\lvert\mathbb{E}\bigl({\textstyle\int_{Z_j}} f\bigr)\bigr\rvert
  \leq \frac{1}{2L}\mathbb{E}\bigl({\textstyle\int_{Z_1}} f\bigr),\]
  giving a lower bound on $\mathbb{E}\bigl(\int_Z f\bigr)$.

  \subsection*{Case $d \geq 3$}
  We split the cube $[0,1]^d$ into $\sim N$ subcubes of side length
  $r \approx N^{-1/d}$, and let
  \begin{align*}
    g(x) &= \sum_{i \in I_1} g_i(x)=
    \sum_{i \in I_1} \pm r\Delta^r_{v_i}(x)\text{ where }\zeta_i=\pm[v_i], \\
    f(x) &= \sum_{t_1,\ldots,t_d=0}^{r^{-1}-1}
    \sgn\Bigl(\int_{Z_1}\chi_{Q_r(t_1,\ldots,t_d)}\Bigr) r\Delta_{Q_r(t_1,\ldots,t_d)}(x),
  \end{align*}
  where $Q_r(t_1,\ldots,t_d)$ is the cube with side length $r$ whose vertex
  closest to the origin is $(rt_1,\ldots,rt_d)$.  Note that $f$ is $2$-Lipschitz.

  We can think of the number of points landing in each subcube as $N$ independent
  $\lambda=1$ Poisson processes which we stop once their sum reaches roughly
  \[\mathbb{P}(\zeta_i \cap [0,1]^d \neq 0)\lvert I_1 \rvert.\]
  By the law of large numbers, the stopping time will be very close to
  \[t=N^{-1}\mathbb{P}(\zeta_i=\pm[y], y \in [0,1]^d)\lvert I_1 \rvert\]
  and very nearly $Nte^{-t}$ of the subcubes will contain exactly one point.  Of
  these, with high probability, at least $(1/2) \cdot 3^{-d} \cdot Nte^{-t}$ will
  be contained in the middle third of their subcube.  Therefore,
  \[\int_{Z_1} f \geq \frac{c_{n,k}}{L} \cdot N^{\frac{d-1}{d}}
  \text{ with high probability}.\]
  On the other hand, given $j \neq 1$, $i \in I_j$, and $i' \in I_1$ such that
  $\zeta_i$ is correlated with $\zeta_{i'}$, Lemma \ref{correlation} tells us
  that
  \[\bigl\lvert\mathbb{E}\bigl(\textstyle{\int_{\zeta_i}} g_{i'}\bigr)\bigr\rvert
  \leq C_{n,k}r^{3/2}\]
  and therefore
  \[\bigl\lvert\mathbb{E}\bigl(\textstyle{\int_{Z_j}} f\bigr)\bigr\rvert \leq
  \sum_{i \in I_j} LC_{n,k}r^{3/2} \leq LC_{n,k}N^{\frac{d-1}{d}-\frac{1}{2d}}.\]
  Since this is small compared to $\int_{Z_1} f$, this shows that
  \[\mathbb{E}\Bigl(\int_{Z \cap P_{\vec x}} f\Bigr)
  \geq \frac{c_{n,k}}{L}N^{\frac{d-1}{d}}\Lip f\qquad\text{for large enough }N.\]

  \subsection*{Case $d=2$}
  This case is broadly similar, but we build the function $f$ in a more
  complicated way, following the original proof of \cite{AKT}.  For an integer
  $r$, let $Q^r_{st}$ be the square of side length $2^{-r}$ whose lower left
  corner is at $(s\cdot 2^{-r},t\cdot 2^{-r})$, and write
  $\Delta^r_{st}=\Delta_{Q^r_{st}}$.  We write
  \begin{align*}
    g(x,y) &= \sum_{r=1}^{0.1\log N} \sum_{s,t=0}^{2^r-1} g^r_{st}(x,y)=
    \sum_{r=1}^{0.1\log N} \sum_{s,t=0}^{2^r-1} \Delta^r_{st}(x,y)
    \int_{Z_1} \Delta^r_{st} \\
    &= \sum_{r=1}^{0.1\log N} \sum_{i \in I_1} g^r_{\zeta_i}(x,y) \\
    &= \sum_{r=1}^{0.1\log N} \sum_{i \in I_1} \Delta^r_{v_i}(x,y)\int_{\zeta_i}
    \Delta^r_{v_i}\qquad\text{where }\zeta_i=\pm[v_i].
  \end{align*}

  Notice that $\int_{Z_1} g^r_{st}$ is always nonnegative: roughly speaking, it
  measures the square of the ``imbalance'' of positive and negative points in
  $Q^r_{st}$.  In particular, it's not hard to see that
  $\mathbb{E}(\int_{Z_1} g^r_{st})=c_{n,k} \cdot 2^{-2r}N$, and therefore
  $\mathbb{E}(\int_{Z_1} g)=c_{n,k}N\log N$.

  On the other hand, the derivative of $g$ is $O(\sqrt{N \log N})$ on average,
  but can be much larger in some places.  To remedy this, Ajtai, Koml\'os, and
  Tusn\'ady introduced a ``stopping time'' rule, building $f$ as the sum of some,
  but not all of the $g^r_{st}$.  We do not need to give the exact definition,
  remarking only that the function $f$ satisfies
  \begin{align}
    \mathbb{E}\bigl(\textstyle{\int_{Z_1}} f\bigr)
    &\geq \frac{c_{n,k}}{L}N\log N \label{2:int} \\
    \Lip f &\leq C_{n,k}\sqrt{N \log N}. \label{2:Lip}
  \end{align}

  Now, for $j \neq 1$,
  \[\bigl\lvert\mathbb{E}\bigl({\textstyle\int_{Z_j}} f\bigr)\bigr\rvert
  \leq \sum_{i \in I_j} \sum_{r=1}^{0.1\log N} \sum \bigl\{\bigl\lvert\mathbb{E}\bigl(
       {\textstyle\int_{\zeta_i}} g^r_{\zeta_{i'}}\bigr)\bigr\rvert
  \mid \zeta_{i'}\text{ is correlated with }\zeta_i\bigr\}.\]
  By Lemma \ref{correlation}, the value of each term of this triple sum is
  $O(2^{-r/2})$.  Therefore,
  \[\bigl\lvert\mathbb{E}\bigl({\textstyle\int_{Z_j}} f\bigr)\bigr\rvert
  \leq C_{n,k}LN \sum_{r=1}^{0.1\log N} 2^{-r/2} \leq (1+\sqrt{2})C_{n,k}LN.\]
  Combining this with \eqref{2:int} and \eqref{2:Lip}, we see that
  \[\mathbb{E}\Bigl(\int_{Z \cap P_{\vec x}} f\Bigr)
  \geq \frac{c_{n,k}}{L}\sqrt{N\log N}\Lip f\qquad\text{for large enough }N.\]

  \subsection*{Case $d=1$}
  We split the interval $[0,1]$ into $R$ equal regions, with $R$ to be determined
  later.  Write $\Delta_s=\Delta_{[s/R,(s+1)/R]}$, and let
  \[g(x)=\sum_{s=0}^{R-1} \Delta_s(x){\textstyle\int_{Z_1}}
  \chi_{\left[\frac{s}{R},\frac{s+1}{R}\right]}=\sum_{i \in I_1} g_{\zeta_i}(x)=
  \sum_{i \in I_1} \pm\Delta^{1/R}_{y_i}(x)\qquad\text{where }\zeta_i=\pm[y_i].\]
  We obtain the desired function $f$ by replacing
  $\int_{Z_1} \chi_{\left[\frac{s}{R},\frac{s+1}{R}\right]}$ with
  \[h_s=\sgn\biggl(\int_{Z_1} \chi_{\left[\frac{s}{R},\frac{s+1}{R}\right]}\biggr)
  \min\biggl\{\biggl\lvert\int_{Z_1} \chi_{\left[\frac{s}{R},\frac{s+1}{R}\right]}
  \biggr\rvert, C_{n,k}\sqrt{\frac{N}{R}}\biggr\}\]
  for some sufficiently large $C_{n,k}$.  Then $\Lip f \leq C_{n,k}\sqrt{NR}$ and
  $\int_{Z_1} f \geq \frac{c_{n,k}}{L}N$.

  On the other hand, for $j \neq 1$ and any $i \in I_j$ and $i' \in I_1$ such
  that $\zeta_i$ and $\zeta_{i'}$ are correlated, by Lemma \ref{correlation},
  $\bigl\lvert\mathbb{E}\bigl({\textstyle\int_{\zeta_i}} g_{\zeta_{i'}}\bigr)
  \bigr\rvert \leq C_{n,k}R^{-1/2}$, and therefore
  \[\bigl\lvert\mathbb{E}\bigl({\textstyle\int_{Z_j}} f\bigr)\bigr\rvert
  \leq C_{n,k}LNR^{-1/2}.\]
  For some large enough $R$, depending on $n$ and $k$ but not on $N$,
  \[\bigl\lvert\mathbb{E}\bigl({\textstyle\int_{Z_j}} f\bigr)\bigr\rvert
  \leq \frac{1}{2L^2}\mathbb{E}\bigl({\textstyle\int_{Z_1}} f\bigr).\]
  Thus $\mathbb{E}\bigl(\int_{Z \cap P_{\vec x}} f\bigr) \geq C_{n,k}\sqrt{N}\Lip f$,
  completing the proof.
\end{proof}

\section{Concentration of measure} \label{S:concentration}

In this section, we show that when $n-k \geq 2$, the size of the filling tends to
concentrate around its mean.  That is, we show that \eqref{AKTstats:V} holds in
the case of Theorems \ref{sphere}, \ref{cube}, and \ref{knot}.  We first prove
this in the case of Theorem \ref{AKT:boundary}.  The main tool is the
concentration of measure in high-dimensional balls, an idea due to Gromov and
Milman \cite{GroMi} and of wide importance in probability theory \cite{Ledoux}.
We follow the exposition due to Bobkov and Ledoux \cite[\S7.1]{BobLMem} which
covers the 1-dimensional case; the higher-dimensional cases are essentially the
same although they do not seem to appear explicitly in the literature.
\begin{thm} \label{concentration}
  Let $Z$ be a random cycle in $C_0([0,1]^n,\partial[0,1]^n)$ as in Theorem
  \ref{AKT:boundary}.  Then for every $r>0$,
  \[\mathbb{P}[\lvert FV(Z)-\mathbb{E}(FV(Z))\rvert \geq r] \leq
  C_1\exp\bigl(-C_2r/\sqrt{N}\bigr)\]
  for universal constants $C_1,C_2>0$.
\end{thm}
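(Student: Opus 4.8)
The plan is to realize $FV(Z)$ as a function of $N$ i.i.d.\ points in $[0,1]^n$ and apply a concentration-of-measure inequality on the product space. Let $\xi_1,\dots,\xi_N$ be the chosen signed points in $[0,1]^n\times\{+1,-1\}$, and write $FV(Z)=\Phi(\xi_1,\dots,\xi_N)$. First I would show that $\Phi$ is Lipschitz in each coordinate with a controlled constant: if one point $\xi_i$ is moved (keeping the others fixed), the cycle changes by adding and subtracting at most two points, and by the Federer--Fleming isoperimetric inequality (the linear bound $FV\le C_{n}\mass$) the filling volume changes by at most a constant $C_n$ independent of $N$. Thus $\Phi$ has the bounded-differences property with each constant equal to $C_n$. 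More precisely, to run the Gromov--Milman-style argument as in \cite[\S7.1]{BobLMem}, I would view $[0,1]^n\times\{\pm1\}$ as a metric probability space of diameter $O(1)$ and $([0,1]^n\times\{\pm1\})^N$ as its $N$-fold product with the $\ell^2$ (or $\ell^1$) product metric; the map $\Phi$ is then Lipschitz with constant $O(1)$ with respect to the $\ell^\infty$ product metric, hence $O(\sqrt N)$ with respect to the normalized $\ell^2$ metric, but crucially only $O(1)$ in each single coordinate.

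The key step is the exponential concentration inequality for Lipschitz functions on products of bounded metric spaces: for a function with the bounded-differences constant $c$ in each of $N$ coordinates, one has $\mathbb{P}[|\Phi-\mathbb{E}\Phi|\ge r]\le C_1\exp(-C_2 r^2/(Nc^2))$ by Azuma--Hoeffding (McDiarmid's inequality), or the sharper sub-Gaussian bound from the product-space concentration in \cite{Ledoux}. This already gives a Gaussian tail at scale $\sqrt N$. To upgrade the Gaussian tail to the exponential tail claimed in the statement—$C_1\exp(-C_2 r/\sqrt N)$—one observes that the Gaussian bound $\exp(-C_2 r^2/N)$ is stronger than $\exp(-C_2 r/\sqrt N)$ precisely when $r\ge \sqrt N$, and for $r\le \sqrt N$ the exponential bound is vacuous up to adjusting $C_1$ (since $\exp(-C_2 r/\sqrt N)\ge e^{-C_2}$ there). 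So the stated (weaker) exponential form follows immediately from the bounded-differences inequality; alternatively, following \cite{BobLMem} one can get a genuine exponential tail directly from the concentration function of the sphere/cube by the usual approximation of the uniform measure on $[0,1]^n$ by projections of the sphere, which is the route I would take to match their exposition and to handle the $\pm 1$ decoration cleanly.

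The main obstacle I anticipate is the per-coordinate Lipschitz bound: one must check carefully that changing a single point really changes $FV(Z)$ by $O(1)$ and not by something growing with $N$. This is where the \emph{linear} Federer--Fleming bound $FV(T)\le C_n\mass(T)$ is essential—the added/removed $0$-chains have mass $O(1)$, independent of $N$, so their filling volumes are $O(1)$, and $FV$ is subadditive under adding cycles. Matching to the boundary (allowed in our setup) only helps, since it can only decrease $FV$. Once this stability estimate is in hand, the concentration inequality is a black box. Finally, I would remark that the same argument transfers verbatim to the settings of Theorems \ref{sphere} and \ref{cube}—great $k$-spheres and $k$-planes each contribute a cycle of mass $O(1)$ to a single slice direction and $O(1)$ total after summing the bounded-differences contributions over the $\binom{n}{k}$ coordinate directions—and, with the dependency-graph coloring of Lemma \ref{slice-properties}(iii) absorbing a factor of $L$, to Theorem \ref{knot} as well, since moving one vertex of the pseudomanifold alters at most $L$ simplices and hence changes $Z$ by a cycle of mass $O(L)$.
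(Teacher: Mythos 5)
Your argument is correct, but it takes a genuinely different route from the paper. The paper metrizes the space of relative $0$-cycles by $d_{FV}(Z,Z')=FV(Z-Z')$, observes that the parametrizing map $E^N\to (X,d_{FV})$ is measure-preserving and $\sqrt N$-Lipschitz for the Euclidean product metric, and then invokes the Gromov--Milman spectral-gap bound on the concentration function of $E^N$ (the eigenvalue $\lambda_1$ being unchanged under taking products), so that any $1$-Lipschitz function on $(X,d_{FV})$ --- in particular $FV$ itself, by subadditivity --- concentrates with the exponential tail $\exp(-cr/\sqrt N)$, first around its median and then around its mean. You instead exploit the same underlying stability estimate (moving one signed point changes $FV$ by $O(1)$, since the difference is a relative $0$-cycle of mass at most $2$) but feed it into McDiarmid's bounded-differences inequality, obtaining a sub-Gaussian tail $\exp(-cr^2/N)$; as you note, this implies the stated bound after adjusting $C_1$ (the stated bound is vacuous for $r\lesssim\sqrt N$ and weaker for $r\gtrsim\sqrt N$), so your conclusion is in fact sharper in the large-deviation regime, at the cost of being tied to product measures, whereas the paper's spectral argument is what it applies verbatim to the sphere, plane-space and pseudomanifold settings. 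Your transfers to Theorems \ref{sphere}, \ref{cube} and \ref{knot} do work, and for the knot case you do not even need the dependency-graph coloring of Lemma \ref{slice-properties}: the vertex positions are independent, and the only input is that moving one vertex changes $Z$ by a cycle of mass (hence filling volume) $O(L)$. One small slip: $\Phi$ is not $O(1)$-Lipschitz for the $\ell^\infty$ product metric (moving all $N$ points by $\varepsilon$ can change $FV$ by $N\varepsilon$); what you actually use --- a per-coordinate increment of $O(1)$, equivalently $O(\sqrt N)$-Lipschitz for the $\ell^2$ product metric, matching the paper's computation --- is correct and is all that McDiarmid requires.
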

In particular, the standard deviation of $FV(Z)$ is at most $O(\sqrt{N})$.  In
other words, for $n \geq 2$, $FV(Z)/\mathbb{E}(FV(Z))$ converges to 1 as
$N \to \infty$.
\begin{proof}
  Equip $X=C_0([0,1]^n,\partial[0,1]^n)$ with the metric
  \[d_{FV}(Z,Z')=FV(Z-Z')\]
  and let $E=[-1,1] \times [0,1]^{n-1}$.  Define $\zeta_0:E \to X$ by
  \[\zeta_0(\pm x_1,x_2,\ldots,x_n)=\pm[(x_1,x_2,\ldots,x_n)]\]
  and $\zeta:(E^N,d_{\text{Eucl}}) \to X$ by
  \[\zeta(v_1,\ldots,v_N)=\sum_{i=1}^N \zeta_0(v_i).\]
  This map is $\sqrt{N}$-Lipschitz since when every point moves by a tiny amount
  $\epsi$, the distance is $\sqrt{N}\epsi$ in the domain and $N\epsi$ in the
  range.

  Define the \emph{concentration function} of a metric measure space $(M,d,\mu)$
  of total measure 1 to be
  \[\conc_M(r)=\sup\{1-\mu(N_r(A)) \mid \mu(A) \geq 1/2\}, \qquad r>0,\]
  where $N_r(A)$ is the $r$-neighborhood of the set $A$.  The key observation of
  Gromov and Milman \cite[Thm.~4.1]{GroMi} is that
  \[\conc_M(r) \leq \frac{3}{4}e^{-\ln(3/2)\lambda_1 r},\]
  where $\lambda_1$ is the first nonzero eigenvalue of the Laplacian on $M$.
  Since the spectrum of a product of manifolds is the sum of its spectra,
  $\lambda_1$ is constant on powers of $M$.  The map $\zeta$
  is measure-preserving, so it follows that
  \[\conc_X(r) \leq \frac{3}{4}\exp\bigl(-\ln(3/2)\lambda_1 r/\sqrt{N}\bigr).\]
  Therefore, for any $1$-Lipschitz function $u:X \to \mathbb{R}$,
  \[\mathbb{P}[|u(Z)-\operatorname{median}(u)| \geq r] \leq
  \frac{3}{2}\exp\bigl(-\ln(3/2)\lambda_1 r/\sqrt{N}\bigr).\]
  By Chebyshev's inequality, the same, modulo constants, holds for the mean (see
  also \cite[Prop.~1.10]{Ledoux}).
\end{proof}
To adapt this proof for the case of Theorems \ref{sphere}, \ref{cube}, and
\ref{knot}, we just have to change the space $E$: take
\begin{align*}
  E_{\text{sphere}} &= \widetilde{Gr}_k(\mathbb{R}^n) &&
  \text{for Theorem \ref{sphere}} \\
  E_{\text{cube}} &= \{\text{affine $k$-planes }P \subset \mathbb{R}^n
  \mid P \cap [0,1]^n \neq \emptyset\} &&
  \text{for Theorem \ref{cube}} \\
  E_{\text{knot}} &= [0,1]^n &&
  \text{for Theorem \ref{knot}}.
\end{align*}
In the first two cases, the map $\zeta$ is constructed as before.  In the last
case, for a $k$-pseudomanifold $M$ with vertex set $M^0$,
$\zeta_M:E_{\text{knot}}^{\lvert M^0 \rvert} \to Z_k([0,1]^n)$ sends
$(v_1,\ldots,v_{|M^0|})$ to the image of the linear immersion of $M$ with vertices
$(v_1,\ldots,v_{|M^0|})$.

In each case, it is easy to see that if the space of $k$-cycles is given the
filling volume metric, then $\zeta$ is $\sqrt{N}$-Lipschitz.  Therefore, the rest
of the proof is identical to that of Theorem \ref{concentration}.

\bibliographystyle{amsalpha}
\bibliography{stochastic}
\end{document}